\theoremstyle{plain}
\newtheorem{theorem}[subsubsection]{Theorem}
\newtheorem*{thm:H1G}{\Cref{thm:H1tauG}}
\newtheorem*{thm:H622}{\Cref{thm:parahoricfromcoverings}}
\newtheorem*{thm:412421}{\Cref{thm:local} and \Cref{thm:BT}}
\newtheorem*{cor421}{\Cref{thm:BT}}
\newtheorem{lemma}[subsubsection]{Lemma}
\newtheorem{prop}[subsubsection]{Proposition}
\newtheorem{cor}[subsubsection]{Corollary}
\theoremstyle{definition}
\newtheorem{defi}[subsubsection]{Definition}
\theoremstyle{remark}
\newtheorem{rmk}[subsubsection]{Remark}
\newtheorem{eg}[subsubsection]{Example}
\newcommand{\NN}{\mathbb{N}}
\newcommand{\ZZ}{\mathbb{Z}}
\newcommand{\QQ}{\mathbb{Q}}
\newcommand{\RR}{\mathbb{R}}
\newcommand{\CC}{\mathbb{C}}
\newcommand{\Ho}{\mathrm{H}}
\newcommand{\car}{\mathrm{char}}
\newcommand{\Uc}{\mathcal{U}}
\newcommand{\Gc}{\mathcal{G}}
\newcommand{\Tc}{\mathcal{T}}
\newcommand{\Rc}{\mathcal{R}}
\newcommand{\Sc}{\mathcal{S}}
\newcommand{\Ac}{\mathcal{A}}
\newcommand{\Ec}{\mathcal{E}}
\newcommand{\Fc}{\mathcal{F}}
\newcommand{\Hc}{\mathcal{H}}
\newcommand{\Pc}{\mathcal{P}}
\newcommand{\Eco}{\mathcal{E}^\circ}
\newcommand{\Kc}{\mathcal{K}}
\newcommand{\Oc}{\mathcal{O}}
\newcommand{\Arm}{\mathcal{A}_{r/m}}
\newcommand{\Srm}{\mathcal{S}_{r/m}}
\newcommand{\Av}{\mathsf{Av}}
\newcommand{\norm}{\mathsf{N}}
\newcommand{\coX}{X_*}
\newcommand{\coP}{\check{P}}
\newcommand{\Db}{\mathbb{D}}
\newcommand{\XC}{X}
\newcommand{\qp}{f}
\newcommand{\QP}{q}
\newcommand{\Ct}{\widetilde{C}}
\newcommand{\calpha}{\check{\alpha}}
\newcommand{\cbeta}{\check{\beta}}
\newcommand{\cvarpi}{\check{\varpi}}
\newcommand{\Ad}{\mathrm{Ad}}
\newcommand{\ad}{\mathrm{ad}}
\newcommand{\Spec}{\mathrm{Spec}}
\newcommand{\Res}{\mathrm{Res}}
\newcommand{\Bun}{\mathrm{Bun}}
\newcommand{\Aut}{\mathrm{Aut}}
\newcommand{\Out}{\mathrm{Out}}
\newcommand{\Ga}{\mathbb{G}_{\mathrm{a}}}
\newcommand{\Gm}{\mathbb{G}_{\mathrm{m}}}
\newcommand{\SL}{\textrm{SL}}
\newcommand{\PGL}{\textrm{PGL}}
\newcommand{\GL}{\textrm{GL}}
\newcommand{\Iso}{\mathcal{I}\textit{so}}
\newcommand{\g}{\mathfrak{g}}
\renewcommand{\arraystretch}{1.5}
\author[Chiara Damiolini]{Chiara Damiolini}
\address{\textrm{Chiara Damiolini} \newline \indent Department of Mathematics, University of Texas at Austin, Austin, TX}
\email{chiara.damiolini@austin.utexas.edu}
\author[Jiuzu Hong]{Jiuzu Hong}
\address{\textrm{Jiuzu Hong} \newline \indent Department of Mathematics, University of North Carolina at Chapel Hill, Chapell Hill, NC}
\email{jiuzu@email.unc.edu}
\title[Local types of $(\Gamma,G)$-bundles and parahoric group schemes]{Local types of $(\Gamma,G)$-bundles\\ and parahoric group schemes}
\subjclass[2020]{20G35, 14L15  (primary), 20J06, 14D20 (secondary)}
\begin{document}

\begin{abstract}
Let $G$ be a simple algebraic group over an algebraically closed field $k$. Let $\Gamma$ be a finite group acting on $G$. We classify and compute the local types of $(\Gamma, G)$-bundles on a smooth projective $\Gamma$-curve in terms of the first non-abelian group cohomology of the stabilizer groups at the tamely ramified points with coefficients in $G$. When $\text{char}(k)=0$, we prove that any generically simply-connected parahoric Bruhat--Tits group scheme can arise from a $(\Gamma,G_{\ad})$-bundle. We also prove a local version of this theorem, i.e.  parahoric group schemes over the formal disc arise from constant group schemes via tamely ramified coverings.
\end{abstract}
\maketitle

\section{Introduction}

Moduli spaces of vector bundles on a smooth and projective curve $C$ have been a central object in algebraic geometry, with tight relations to representation theory and also conformal field theory. A natural generalization of these objects, which encompass parabolic bundles or prym varieties, is to instead consider principal $\Gc$-bundles over $C$, where $\Gc$ is a \textit{parahoric Bruhat--Tits group}. The definition of this type of group can be roughly described by saying that the generic fiber of $\Gc$ is reductive, it has connected geometric fibers, and for every point $x \in C$ such that $\Gc|_x$ is not reductive, the sections of $\Gc$ over a formal disk about $x$ defines a parahoric group in the sense of \cite{bruhat.tits:1984:II}, see \cref{def:BT}.  The points $x$ with non reductive fiber $\Gc|_x$ are called ramified points of $\Gc$. In what follows we will assume further that the generic fiber of $\Gc$ is actually simple and not merely reductive. One of the aims of this paper, developed in \cref{sec:BTfromcoverings} and inspired by \cite{balaji.seshadri:2015:moduli}, is to provide a direct description of these groups through (ramified) Galois coverings and the concept of $(\Gamma,G)$-bundles. 

Parahoric Bruhat--Tits groups have been introduced in \cite{pappas.rapoport:2010:questions}, where the authors outline a series of conjectures about the moduli space $\Bun_\Gc$ of $\Gc$-bundles. Among others, they suggest the description of spaces of generalized theta functions on $\Bun_\Gc$ via an appropriate notion of conformal blocks, which would be the first step to obtain a Verlinde-type formula to compute their dimension. Motivated by this paper and by the uniformization theorem for $\Bun_\Gc$ \cite{heinloth:2010:uniformization}---conjectured as well in \cite{pappas.rapoport:2010:questions}---many mathematicians have worked on twisted conformal blocks \cite{damiolini:2020:conformal,zelaci:2019,Hong-Kumar:2019,deshpande.mukhopadhyay:2019,hong.kumar:2022}. These are a natural generalization of the conformal blocks attached to a curve $C$ and to representations of simple Lie algebras (see, e.g., \cite{tuy}), where the simple Lie algebra is replaced by a pair $(\Gamma,\g)$ consisting of a simple Lie algebra $\g$ and a finite group $\Gamma$ acting on $\g$. Its representation theory is described by (possibly twisted) affine Lie algebras and, under appropriate conditions, one can identify them with appropriate spaces of generalized theta functions on $\Bun_{\Gc}$, where $\Gc$ is a parahoric Bruhat--Tits group on $C$ arising from a $\Gamma$ covering $\Ct$ of $C$ \cite{Hong-Kumar:2019}.

As we have mentioned, one of the goals is to express parahoric Bruhat--Tits groups through $(\Gamma,G)$-bundles, concept introduced in \cite{balaji.seshadri:2015:moduli} and then extended by the first author in \cite{damiolini:2021:equivariant}. Let $\Gamma$ be a finite group and assume that it acts on a smooth and projective curve $\Ct$ over $\Spec(k)$ so that $\pi \colon \Ct \to \Ct/\Gamma =: C$ is a possibly ramified covering of the curve $C$ (we assume here that $\car(k)$ does not divide the order of $\Gamma$). Let $G$ be a simple algebraic group over $k$ and assume that $\Gamma$ acts on $G$ as well, so that one obtains an induced action of $\Gamma$ on $G \times \Ct$. Under these assumptions, the group $\Gc:= \pi_*(G \times \Ct)^\Gamma$ is a parahoric Bruhat--Tits group over $C$. We can generalize this construction, by considering $(\Gamma,G_\ad)$-bundles, i.e.  principal $G_\ad$-bundles over $\Ct$ which are further equipped with a compatible action of $\Gamma$. If $\Ec$ is a $(\Gamma,G_\ad)$-bundle, then the group scheme $G_\Ec:= \Ec \times^{G_\ad} G$ is still equipped with an action of $\Gamma$ lifting the action on $\Ct$, so that  $\Gc_\Ec:= \pi_*(G_\Ec)^\Gamma$ defines a smooth group scheme over $C$. This is actually a parahoric Bruhat--Tits group over $C$ and, according to the following theorem, this construction essentially recovers all parahoric Bruhat--Tits groups. 

\begin{thm:H622} Let $\Gc$ be a parahoric Bruhat--Tits group over $C$, a smooth and projective curve of genus $g \geq 1$ over an algebraically closed field $k$ of characteristic zero. Assume that $\Gc$ is generically simple and simply connected, and let $G$ be the group over $k$ with the same root datum as $\Gc_{k(C)}$, where $k(C)$ is the function field of $C$. Then there exists a finite group $\Gamma$, a $\Gamma$-covering $\pi \colon \Ct \to C$, and a $(\Gamma,G_{\ad})$-bundle $\Ec$ on $\Ct$ such that $\Gc \cong \pi_*(G_\Ec)^\Gamma$.
\end{thm:H622}

This result is stated in slightly more generality in \cref{thm:parahoricfromcoverings}, where curves of genus zero are allowed, provided that $\Gc$ is ramified at at least two points, or not ramified at all.  This result can be seen as a generalization of \cite[Theorem 5.2.7]{balaji.seshadri:2015:moduli}: in fact, Balaji and Seshadri assume that the parahoric Bruhat--Tits group $\Gc$ is generically split, while we do not restrict to this case. 

The proof of  \cref{thm:parahoricfromcoverings} relies on its local counterpart, developed in \cref{sec:local} and which we summarize here. For every integer $N$ will use the notation $\Kc_N$ (resp. $\Oc_N$) to denote $k(\!(z^{1/N})\!)$ (resp. $k[\![z^{1/N}]\!]$). Moreover, if $\sigma$ is an automorphism of $G$ with $\sigma^N=1$, we let $\sigma$ act on $\Kc_N$ as well by $\sigma(z^{1/N})=\zeta_N^{-1}z^{1/N}$ for a primitive $N$-th root of unity $\zeta_N$.

\begin{cor421} Let $\tau$ be a diagram automorphism of order $r$ of the simple and simply connected group $G$, and let $T$ be a maximal torus preserved by $\tau$.  Let $\Gc_\theta$ be a parahoric group scheme of the twisted loop group $G(\Kc_r)^\tau$ corresponding to a point $\theta$ in $X_*(T)^\tau_{\QQ}$. Let $m$ be the minimal positive integer such that $m\theta$ is in the lattice $X_*(T_\ad)^\tau$, where $T_{\ad}$ is the image of $T$ in $G_{\ad}$, the adjoint quotient of $G$. If $\car(k)$ does not divide $m$, then 
\[\Gc_\theta \cong {\Res}_{\Oc_m/\Oc}\left(G_{\Oc_m}\right)^\sigma,\] for some automorphism $\sigma$ on $G$ such that $\sigma^m=1$, and $\bar{\sigma}=\bar{\tau} \in {\rm Out}(G)$.
\end{cor421}

This can be viewed as an extension of \cite[Theorem 2.3.1]{balaji.seshadri:2015:moduli}, where the authors assumed $\tau$ is trivial. Here instead, $\sigma$ can be any finite order automorphism of $G$, see \cref{cor:parahoric}. 
Moreover, we work over a field $k$ of possibly positive characteristic, while \cite{balaji.seshadri:2015:moduli} considers only the case $k=\CC$. In \cref{rmk:adjoint} we also note that the above result remains true if $r=2$ (resp. $r=3$) and $G$ is adjoint of type $A_{2\ell}$, $E_6$ (resp. $D_4$). 

The key step towards proving this result is \cref{thm:local}, where we show that indeed the group scheme ${\Res}_{\Oc_m/\Oc}\left(G_{\Oc_m}\right)^\sigma$ is a parahoric groups scheme $\Gc_\theta$ for an appropriate $\theta$ which we can determine explicitly using \cref{thm:H1tauG}. This heavily relies on \cref{prop:kacgroup}, which can be seen as the group theoretic counterpart of the analogous isomorphism that holds at the level of affine Lie algebras \cite[Theorem 8.5]{kac:1990:infinite}. This perspective will also be the underlying point of view of \cref{appendix} and in particular of \cref{thm:clas}.

\medskip 

The principal consequence of \cref{thm:parahoricfromcoverings} is that the study of parahoric Bruhat--Tits group can be translated into the analysis of $(\Gamma,G)$-bundles. 
We first of all show that any two $(\Gamma,G)$-bundles are isomorphic outside the ramification points of the covering $\Ct \to C$. We then call the isomorphism class of a $(\Gamma,G)$-bundle around a ramification point the \textit{local type} of the bundle (as in \cite{balaji.seshadri:2015:moduli} and \cite{damiolini:2021:equivariant}).  
We further give a combinatorial description of the local types using the \textit{non-abelian cohomology} $\Ho^1(\Gamma_x,G)$ for the cyclic group $\Gamma_x \subset \Gamma$ fixing the ramified point $x$. Intuitively, if we see the $(\Gamma,G)$-bundle $\Ec$ only as a $G$-bundle, this is trivial on the formal neighborhood $\Db_x$ of $x$. It is the action of $\Gamma_x$ on this trivial $G$-bundle that can be explicitly determined by an element $G(\Db_x)$ satisfying appropriate cocycle conditions.
By changing the trivialization, this gives rise to an element of $\Ho^1(\Gamma_x,G(\Db_x))$, which in turns coincides with $\Ho^1(\Gamma_x,G)$ (see  \cref{prop:twgen} as well as \cite[Lemma 2.5]{teleman.woodward:2003:parabolic}). In \cref{thm_comp} we further show that, when $G$ is simply-connected, the moduli stack $\Bun_{\Gamma,G,\vec{\kappa}}$ parametrizing $(\Gamma,G)$-bundles of local type $\vec{\kappa}$ at ramified points is isomorphic to $\Bun_{\Gc}$ for some parahoric Bruhat--Tits group $\Gc$.

\medskip

We are then left to give an explicit description of $\Ho^1(\Gamma,G)$ for a cyclic group $\Gamma$, whose generator $\gamma$ acts on the simple group $G$ via an automorphism of $G$. Resorting to previous works on \textit{twisted conjugacy classes} for simple algebraic groups and related invariant theory \cite{mohrdieck:2003:conjugacy, springer:2006:twisted}, we can explicitly describe $\Ho^1(\Gamma,G)$ when $\gamma$ acts as a diagram automorphism $\tau$ of $G$ as follows. 

\begin{thm:H1G} Let $m$ be the order of $\Gamma$ and let its generator $\gamma$ act on $G$ by the diagram automorphism $\tau$ of order $r$. Assume that  $\car(k)$ does not divide $m$. Then if either $k = \CC$, or $G$ is simply connected or adjoint, then 
\[ \Ho^1(\Gamma,G) \cong \dfrac{\frac{1}{m}\coX(T)^\tau}{\Av_\tau(\coX(T)) \rtimes W^\tau},
\] where $\coX(T)$ denotes the set of cocharacters of $T$, by $\Av_\tau$ we denote the map averaging the action of $\tau$, and $W$ is the Weyl group of $G$.
\end{thm:H1G}

Using this explicit description, we can compare the chamber of Bruhat--Tits buildings for $G(\Kc_r)^\tau$ with the possible local types of $(\Gamma,G)$-bundles. This is used not only as an effective way to compute local types of $(\Gamma,G)$-bundles, but also as an ingredient used to prove the results of \cref{sec:local}.

In \cref{appendix} we explore further consequences of \cref{thm:H1tauG}. When $G$ is simply connected,  this can be identified with certain rational points in the fundamental alcove of the affine Weyl group $\Av_\tau(\coX(T)) \rtimes W^\tau$, see \cref{lemma:alcove}. When $G$ is adjoint,  $\Ho^1(\Gamma,G)$ has an even more concrete realization, as described in \cref{prop:alcove}. This has as a consequence that we can extend Kac's classification of automorphisms of $G$ to fields of positive characteristic not dividing the order of $\Gamma$ (\cref{thm:clas}).

\medskip Some of the results obtained in this paper have been independently obtained by Pappas and Rapoport in the recent preprint \cite{Pappas-Rapoport:2022}. Their work is more arithmetic oriented than the one presented here. For instance, their description of local types are Bruhats-Tits theoretic, while our description is more group-theoretical and combinatorial in nature. One can also compare  \cite[Theorem 1.1]{Pappas-Rapoport:2022} and \cref{thm:parahoricfromcoverings}: under different assumptions---we restrict to characteristic zero---both statements assert that parahoric Bruhat--Tits group schemes arise from coverings, but the methods used to reach this result are different.  Our result is based on its local counterpart (\cref{thm:local}), which we prove in a group-theoretical way and where we do not require characteristic zero. In a previous version of our paper, the argument for \cref{cor:comp} was incomplete. Since this is one of the main results of \cite{Pappas-Rapoport:2022} and it does not represent a key element of this paper, in this version of the paper we use their work to deduce this result.

\subsection*{Structure of the paper} In \cref{sec:parahoric} we recall the definition of parahoric groups and of parahoric Bruhat--Tits group schemes. In \cref{sec:non-abelian} we introduce the concept of non-abelian group cohomology. We give an explicit description of $\Ho^1(\Gamma,G)$ when the cyclic group $\Gamma$ acts on a simple group $G$ by diagram automorphisms in \cref{thm:H1tauG}. In \cref{sec:local} we give an explicit description of the local structure of parahoric group schemes (see \cref{thm:local}). In order to extend this description to a global one, we introduce in \cref{sec:localtypes} the concept of $(\Gamma,G)$-bundle and show that local types uniquely identify them (\cref{prop:nonempty}). Finally, in \cref{sec:BTfromcoverings}, we show that all parahoric Bruhat--Tits groups over a projective smooth curve can be realized from $(\Gamma,G_\ad)$-bundles (\cref{thm:parahoricfromcoverings}). Besides providing many concrete examples of local types and their computations, in \cref{appendix} we also describe $\Ho^1(\Gamma,G)$ in terms of rational points of  the fundamental alcove and classify finite order automorphisms of simple algebraic groups (\cref{thm:clas}). 

\medskip

\subsection*{Acknowledgements} The authors wish to thank J. Heinloth for conversations had during the conference ``Bundles and Conformal Blocks with a Twist'' held at ICMS, Edinburgh. Many thanks also to S. Kumar,  G. Pappas and M. Rapoport for their interest in this work and their comments on a previous version of this paper. We also thank Cheng Su for communicating to us some similar ideas on local types that are presented in this work. J. Hong is grateful to R. Travkin for many stimulating discussions on the global aspects of parahoric Bruhat--Tits group schemes in 2021. Many thanks to the anonymous referee, whose helpful comments significantly improved this paper.  J. Hong was partially supported by the NSF grant DMS-2001365.

\subsection*{Notation}

Throughout we work over an algebraically closed field $k$ of characteristic $p$. Assumptions on the characteristic will vary throughout the paper, and will always be allowed to have characteristic zero.

We denote the valuation field $k(\!(z)\!)$ by $\Kc$ and its ring of integers $k[\![z]\!]$ by $\Oc$.  For every $r \in \NN$ such that $p\not \vert r$, we set $\Kc_r:=  k(\!(z^{\frac{1}{r}})\!)$ and similarly $\Oc_r:=k[\![z^{\frac{1}{r}}]\!]$. The discrete valuation $\mathfrak{v}$ of $\Kc$ extends to a unique valuation $\mathfrak{v}_r$ of $\Kc_r$ with values in $\frac{1}{r}\ZZ$ such that $\mathfrak{v}_rz^{\frac{1}{r}} = \frac{1}{r}$.

We fix a generator $\gamma$ of $\text{Gal}(\Kc_r/\Kc)$ and assume that a primitive $r$-th root of unity $\zeta_r \in k$ is fixed so that $\gamma(z^{\frac{1}{r}}) := \zeta_r^{-1}z^{\frac{1}{r}}$. 

Geometrically, this situation corresponds to the tamely ramified Galois covering between formal disks
\[ \pi \colon  \Db_r:=\Spec(\Oc_r) \longrightarrow \Db:=\Spec(\Oc),
\]which restricts to an étale covering $\Db_r^\times :=\Spec(\Kc_r) \to \Db^\times:=\Spec(\Kc)$ between punctured disks. Similarly, when $x \in C$ is a smooth point of a reduced curve, we will denote by $\Oc_x$ the complete local ring at $x$, in formulas $\Oc_x= \varprojlim \mathscr{O}_{C,p}/\mathfrak{m}_x^n$, and $\Db_x:=\Spec(\Oc_x)$. By choosing a local coordinate $z$ at $x$, these are isomorphic to $\Oc$ and $\Db$. One analogously defines the field $\Kc_x$ and  the scheme $\Db_x^\times=\Spec(\Kc_x)$.

For every scheme $X$ over $\Db^\times$ (or $\Db$), we will denote by $X_{\Kc_r}$ (resp $X_{\Oc_r}$) the fiber product $X \times_{\Db^\times} \Db_r$ (resp. $X \times_\Db \Db_r$).

\section{Parahoric Bruhat--Tits group schemes} \label{sec:parahoric}
In this introductory section, we recall the notions of parahoric groups and the associated group schemes over $\mathcal{O}$. Following \cite{heinloth:2010:uniformization}, in the second half of this section we introduce a \textit{global} version of these groups, that is parahoric Bruhat--Tits group schemes over curves. 

\subsection{Parahoric group schemes} We briefly present here the notion of parahoric group schemes, introduced in \cite{bruhat.tits:1984:II} and to which we refer for more details. These are connected group schemes $\Gc$ over $\Oc$ such that $\Gc_{\Kc}$ are reductive and the sections $\Gc(\Oc)$ define a \textit{parahoric subgroup} $\Pc$ of $\Gc(\Kc)$. They are a generalization of \textit{constant} group schemes $G_{\Oc}$ for a reductive group scheme $G$ over $k$. In this paper we only consider the case in which $\Gc_{\Kc}$ is required to be absolutely simple, and we begin describing such groups.

In this section we denote by $\Gc$ a quasi-split absolutely simple group scheme over $\Kc$ which splits over a tamely ramified extension. This implies that we can write \[\Gc \cong \Res_{\Kc_r/\Kc}(G_{\Kc_r})^\tau,\]
where $G$ is a simple algebraic group over $k$, $\tau$ is a diagram automorphism of order $r$, and $\car(k)$ does not divide $r$. The action of $\tau$ on $\Kc_r$ is $k$-linear and determined by $\tau(z^{\frac{1}{r}}) = \zeta_r^{-1} z^{\frac{1}{r}}$ for $\zeta_r$ a primitive $r$-th root of unity.

The maximal torus $\Tc$ of $\Gc$ can be analogously described as $\Res_{\Kc_r/\Kc}(T_{\Kc_r})^\tau$ and contains the maximal split torus $\Sc$ of $\Gc$ which is isomorphic to the connected component of $(T_{\Kc})^\tau$, which we denote $T_\Kc^{\tau,\circ}$. The relative root system $\Rc$ for $(\Gc, \mathcal{S})$ (which is not necessarily reduced) can be seen as a quotient of the root system $R$ of $(G,T)$. We call an element $a$ of $\Rc$ a \textit{multiple or divisible} root if $2a$ or $\frac{1}{2}a$ is also an element of $\Rc$. This case can only happen when $\tau$ has order two and switches adjacent roots in the Dynkin diagram of $G$. In order to describe parahoric subgroups of $\mathcal{G}(\Kc)$, we briefly introduce the root subgroups $\Uc_{a}$ of $\mathcal{G}$ for every $a\in \Rc$. In what follows, we denote by $\Delta_a \subset R$ the set of preimages of $a \in \Rc$. 

When $a$ is not multiple or divisible, then the group $\Uc_a$ is given by 
\[\left(\prod_{\alpha \in \Delta_a} U_\alpha(\Kc_\alpha)\right)^{\tau},\] 
where $\Kc_\alpha$ is the subfield of $\Kc_r$ which is fixed by the stabilizer of any $\alpha \in \Delta_a$, in formulas $\Kc_\alpha= \Kc_r^{\text{Stab}_{\alpha}}$. Since the action of $\tau$ on $\Delta_a$ is transitive, one can identify $\Uc_a$ with $U_\alpha(\Kc_\alpha)$. In this way, the map $x_\alpha \colon \Ga(\Kc_\alpha) \to U_\alpha(\Kc_\alpha)$ induces the Chevalley-Steinberg pinning $x_a \colon \Ga(\Kc_\alpha) \to \Uc_a$. 

\begin{eg} In the case of $\Delta_a=\{\alpha, \tau(\alpha)\}$, then 
\[\Uc_a=\{ A \tau(A) \quad \text{with} \quad A \in U_\alpha(\Kc_2)\}.
\] Note that $\tau(A) \in U_{\tau(\alpha)}(\Kc_2)$. The identification of $U_\alpha(\Kc_2)$ with $\Uc_a$ is given by $A \mapsto A \tau(A)$. \end{eg}

The discrete valuation $\mathfrak{v}$ of $\Kc$ induces a unique valuation $\mathfrak{v} \colon \Kc_\alpha \to \frac{1}{r}\ZZ$ and, through the pinning $x_\alpha$, this induces a valuation $\mathfrak{v}_a$ on non identity elements of $\Uc_a$. Namely, for every $A \in \Uc_a \setminus \{0\}$ we set $\mathfrak{v}_a(A) := \mathfrak{v}(x_a^{-1}(A))$. This allows us to define, for every $\ell \in \RR$, the group $\Uc_{a,\ell} := \mathfrak{v}_a^{-1}[\ell, \infty) \subseteq \Uc_a$.  

One similarly defines $\Uc_a$ and $\Uc_{a,\ell}$ for multiple and divisible roots, and we refer the reader to \cite[4.1.9]{bruhat.tits:1984:II} and \cite[Appendix]{heinloth:2017:stability}.

Consider the affine space generated by the cocharacters of $\Sc$, i.e. $E=X_*(\Sc) \otimes_\ZZ \RR$, which we can identify with $X_*(T^\tau) \otimes_\ZZ \RR$. The bilinear map $X_*(\Sc) \times X^*(\Sc) \to \ZZ$  induces the map \[E \times \Rc \to \RR,  \qquad (\theta, a) \mapsto \theta(a).\]

\begin{defi} \label{def:parahoric} We define the parahoric group associated with $\theta \in E$ to be the subgroup $\Pc_\theta$ of $\Gc(\Kc)$ which is generated by $T(\Oc_r)^{\tau,0}$  and $\lbrace \Uc_{a,-\theta(a)}  \rbrace_{a \in \Rc}$.  By \cite[3.8.1, 3.8.3, 4.6.2, 4.6.26, 5.2.6]{bruhat.tits:1972:I,bruhat.tits:1984:II} there exists a unique affine smooth group scheme $\Gc_{\theta}$ over $\Oc$ which extends $\Gc$ and such that $\Gc_\theta(\Oc) = \Pc_{\theta}$. The group $\Gc_{\theta}$ is called the \textit{parahoric group scheme} associated with $\theta$. \end{defi}

\subsection{Parahoric Bruhat--Tits group schemes} In the same spirit in which $\Db^\times=\Spec(\Kc)$ and $\Db=\Spec(\Oc)$ detect the local behavior of a smooth curve at a point, the group schemes that we have described in the previous section provide the local description of parahoric Bruhat--Tits group schemes. More precisely, we have the following definition:

\begin{defi} \label{def:BT} Let $C$ be a smooth projective curve over $k$. We say that a smooth affine group scheme $\Gc$ over $C$ is a \textit{parahoric Bruhat--Tits group scheme} over $C$ if it satisfies the following conditions:
\begin{enumerate}
\item All geometric fibers of $\Gc$ are connected.
\item \label{item:defBT2} The generic fiber of $\Gc$ is simple.
\item For all $x \in C$ such that the fiber $\Gc|_x$ is not simple, the group scheme $\Gc|_{\Db_x}$ is a parahoric group scheme extending $\Gc|_{\Db_x^\times}$, that is $\Gc(\Db_x)$ is a parahoric subgroup of $\Gc(\Db_x^\times)$.\end{enumerate}\end{defi}

In \cref{sec:BTfromcoverings} we will see how, under appropriate conditions, parahoric Bruhat--Tits groups can be recovered from coverings of curves through the concept of $(\Gamma,G)$-bundles. 

\begin{rmk} The definitions of parahoric Bruhat--Tits groups given in \cite{heinloth:2010:uniformization,balaji.seshadri:2015:moduli}, allows the generic fiber to be semisimple, rather than simple as in our definition. This simplifies our arguments, but it is not a very restrictive condition.
However, we note that in \cite[Definition 5.2.1]{balaji.seshadri:2015:moduli} condition (\ref{item:defBT2}) is replaced by the assumption that there exists a Zariski open subset $U \subseteq C$ such that $\Gc|_U \cong G \times U$, for $G$ a simple group scheme over $k$. From \cref{prop:genericallyreductive}, in fact this is equivalent to require the parahoric Bruhat--Tits group scheme defined in \cref{def:BT} to be generically split.
\end{rmk}

\section{Non-abelian group cohomology}
\label{sec:non-abelian}

In this section we introduce the non-abelian cohomology $\Ho^1(\Gamma,G)$ of a cyclic group $\Gamma$ with values in a simple group $G$. We show in \cref{thm:H1tauG} that this space can be realized as a quotient  of the non-abelian cohomology of $\Gamma$ with values in a $\Gamma$-invariant torus of $G$. This will be a key result used in \cref{sec:local} and in \cref{sec:localtypes}. We postpone to \cref{appendix} further consequences of \cref{thm:H1tauG}, including an explicit classification of finite order automorphisms of simple groups (see \cref{thm:clas}).

\medskip

We begin by recalling the definition of $\Ho^1(\Gamma,A)$ for a group $A$, not necessarily abelian. Fix a generator $\gamma$ of $\Gamma$ and let $m$ be the cardinality of $\Gamma$. By abuse of notation, we will still denote by $\gamma$ the automorphism of $G$ induced by $\gamma$.  Throughout we will assume that $\car(k)$ does not divide the order of $\Gamma$.

\begin{defi} We say that an element $g \in A$ is a \textit{cocycle} if  
\begin{equation} \label{eq:cocycle} g \cdot \gamma(g) \cdots  \gamma^{m-1}(g)=1
\end{equation} holds. The set of cocycles is denoted by $Z^1(\Gamma,A)$. The \textit{non-abelian cohomology of $\Gamma$ with values in $A$}, denoted by $\Ho^1(\Gamma,A)$, is the quotient of $Z^1(\Gamma,A)$  by the equivalence relation $\sim_\gamma$, which identifies two cocycles $a$ and $b$ if and only if there exists an element $g \in A$ such that $a = g b \gamma(g)^{-1}$.\end{defi}

\begin{rmk} This recovers the definition of non-abelian cohomology introduced in \cite[Part I, \S 5]{serre:1997:galois}, where a 1-cocycle is defined as being a map $\tau \to a_\tau$ of $\Gamma$ to $A$ such that 
\begin{equation} \label{eq:serrecocylce} a_{\tau_1 \tau_2} = a_{\tau_1} \tau_1(a_{\tau_2})
\end{equation}
for all $\tau_1, \tau_2 \in \Gamma$. Two 1-cocycles $a$ and $b$ are \textit{cohomologous} if there exists $c \in A$ such that $a_{\tau} = c b_{\tau} \tau(c)^{-1}$ for every $\tau \in \Gamma$. Since $\Gamma$ is cyclic, a 1-cocycle is uniquely determined by the assignment $\gamma \to a_\gamma$, for a generator $\gamma$ of $\Gamma$ and the condition \eqref{eq:serrecocylce} is translated into \eqref{eq:cocycle}, recovering the definition of cocycles presented here.
Similarly, the equivalence relation $\sim_\gamma$ translates the property of two 1-cocycles being cohomologous.\end{rmk}

One can see that $\Ho^1(\Gamma,A)$ is a pointed space, with the class of $1 \in A$ being a preferred point, but it is not a group if $A$ is not abelian. Furthermore, given an exact sequence of groups
\[ \xymatrix{ 0 \ar[r] & A \ar[r] & B \ar[r] & C \ar[r] & 0 }\] which are equipped with compatible actions of $\Gamma$, one obtains an exact sequence
\[\xymatrix{ 0 \ar[r] & A^\Gamma \ar[r] & B^\Gamma \ar[r] & C^\Gamma \ar[r] & \Ho^1(\Gamma,A) \ar[r] & \Ho^1(\Gamma,B) \ar[r] & \Ho^1(\Gamma,C) }.
\] In this way $\Ho^1(\Gamma,A)$ can be interpreted as a space which measures the failure of right-exactness for the functor that takes a group with a $\Gamma$ action to its subgroup of $\Gamma$-invariants.

\subsection{Non-abelian cohomology for diagram automorphisms} \label{sec:cohomology_diagram}

Let $G$ be a simple algebraic group over $k$ and assume that the cyclic group $\Gamma$ acts on $G$, hence on $G(k)$. Throughout this section, we will further assume that the generator $\gamma$ of $\Gamma$ acts on $G$ via a diagram automorphism $\tau$ of order $r$ preserving a maximal torus $T$ and a Borel subgroup $B$ containing $T$. We will denote  $\Ho^1(\Gamma,G(k))$ by $\Ho^1_\tau(\Gamma,G)$.

Since $\tau$ is a diagram automorphism, the Weyl group $W$ of $(G,T)$ is acted on by $\Gamma$ and we denote by $W^\tau=W^\Gamma$ the invariant elements. Note that if $R=R(G,T)$ is the root system of $G$ relative to $T$, then $W^\tau$ is the Weyl group of $G^\tau$. 

\begin{prop} \label{prop:H1GTW} Using the above notation, there is a bijection 
\[\Ho^1_\tau(\Gamma,G) \cong \Ho^1_\tau(\Gamma,T) / W^\tau.
\] 
\end{prop}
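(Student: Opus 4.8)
The plan is to prove that every cohomology class in $\Ho^1_\tau(\Gamma,G)$ can be represented by a cocycle taking values in $T$, and that two such cocycles are equivalent in $\Ho^1_\tau(\Gamma,G)$ if and only if they differ by an element of $W^\tau$ acting on $\Ho^1_\tau(\Gamma,T)$. The map from right to left is the obvious one: the inclusion $T \hookrightarrow G$ is $\Gamma$-equivariant (since $\tau$ preserves $T$), so it induces $\Ho^1_\tau(\Gamma,T) \to \Ho^1_\tau(\Gamma,G)$, and I first need to check this factors through the $W^\tau$-quotient. For this, recall that $N_G(T)$ acts on $T$ by conjugation, inducing an action on cocycles; if $n \in N_G(T)$ represents $w \in W$ and $t \in Z^1(\Gamma,T)$, then $n t \gamma(n)^{-1}$ is a cocycle in $G$ cohomologous to $t$, and when $n$ can be chosen $\tau$-fixed (which is possible exactly when $w \in W^\tau$, using that $\tau$ is a diagram automorphism so $N_G(T)^\tau$ surjects onto $W^\tau$ — this may need $\car(k) \nmid r$ or a separate lemma) one sees $w\cdot t$ and $t$ have the same image in $\Ho^1_\tau(\Gamma,G)$. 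So the map descends to $\Ho^1_\tau(\Gamma,T)/W^\tau \to \Ho^1_\tau(\Gamma,G)$.

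The substance is \emph{surjectivity} and \emph{injectivity} of this descended map. For surjectivity, take a cocycle $g \in Z^1(\Gamma,G)$; I want to show its class contains a cocycle valued in $T$. The standard device is to twist: form the $\gamma$-twisted conjugation action $h \cdot_\gamma x := h x \gamma(h)^{-1}$ on $G$, under which the class of $g$ is an orbit. The cocycle condition $g\gamma(g)\cdots\gamma^{m-1}(g)=1$ means the element $g \rtimes \gamma$ has order dividing $m$ in $G \rtimes \langle\gamma\rangle$; more precisely one studies the element $g\rtimes \gamma$ of the (disconnected) group and seeks a $G$-conjugate lying in $T \rtimes \langle\gamma\rangle$. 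Since $\car(k)\nmid m$, the element $g \rtimes \gamma$ is semisimple in $G\rtimes\langle\gamma\rangle$, hence (by the theory of semisimple elements in disconnected reductive groups, or directly by Steinberg's results on $\tau$-twisted conjugacy when $\tau$ is a diagram automorphism) it normalizes — after $G$-conjugation — a maximal torus of the identity component of its centralizer, and one arranges it to lie in $N_G(T)\rtimes\langle\gamma\rangle$, in fact, after further conjugation by $N_G(T)$, in $T\rtimes\langle\gamma\rangle$. Unwinding, this says $g$ is $\gamma$-twisted-conjugate in $G$ to an element of $T$; this gives surjectivity. (Alternatively one invokes the folded-root-system picture: the connected centralizer in $G$ of $\gamma$ itself is reductive with maximal torus $T^{\tau,\circ}$ and root system $\overline R$, and an inductive argument on this centralizer reduces to the torus case — this is essentially the content one borrows from the references on twisted conjugacy classes.)

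For injectivity, suppose $t_1, t_2 \in Z^1(\Gamma,T)$ become cohomologous in $\Ho^1_\tau(\Gamma,G)$, i.e. $t_1 = g t_2 \gamma(g)^{-1}$ for some $g \in G$. I must produce $w \in W^\tau$ with $w \cdot [t_1] = [t_2]$ in $\Ho^1_\tau(\Gamma,T)$. The idea: conjugation by $g$ together with the $\gamma$-twist intertwines the two tori $T$ and $g^{-1} T g$ inside $G$, and both $g \rtimes \gamma$–conjugates are maximal tori in a common reductive group (again the centralizer of the semisimple element $t_i \rtimes \gamma$, or one passes to the common torus); any two maximal tori are conjugate by an element of that group, and tracking the induced isomorphism on cocharacter/character lattices produces a Weyl-group element commuting with $\tau$, i.e. an element of $W^\tau$, realizing the identification of $[t_1]$ and $[t_2]$. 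Care is needed that $g$ can be replaced by an element of $N_G(T)^\tau \cdot T$ without changing the classes, which is where the $W^\tau$ (rather than all of $W$) enters.

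\textbf{Main obstacle.} I expect the crux to be the surjectivity step — showing every $G$-valued cocycle is cohomologous to a $T$-valued one — because it genuinely uses the structure theory of $\tau$-twisted conjugacy classes and semisimplicity of the relevant element in the disconnected group $G\rtimes\langle\gamma\rangle$, together with the hypothesis $\car(k)\nmid m$; the bookkeeping to pass from ``normalizes $T$'' to ``lies in $T$'' (absorbing a Weyl element) and to check that the ambiguity is exactly $W^\tau$ is the delicate part. The injectivity step is then largely a matter of reversing this analysis carefully.
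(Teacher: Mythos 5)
Your overall architecture is close in spirit to the paper's: both reduce the statement to the theory of $\tau$-twisted conjugacy of semisimple elements in the disconnected group $\widetilde{G}=G\rtimes\langle\gamma\rangle$. Your surjectivity argument (a cocycle $g$ makes $g.\gamma$ an element of order dividing $m$, hence semisimple since $\car(k)\nmid m$, hence $G$-conjugate into $T^{\tau,0}.\gamma$, and the resulting $T$-valued element is again a cocycle because conjugation preserves the order of $g.\gamma$) is a legitimate alternative to the paper's route, which instead obtains surjectivity cohomologically: \cite[Lemma 7.3]{steinberg:1968:endomorphisms} gives $\Ho^1_\tau(\Gamma,B)\twoheadrightarrow\Ho^1_\tau(\Gamma,G)$, and a filtration of the unipotent radical by vector groups (whose $\Ho^1$ and $\Ho^2$ vanish since $\car(k)\nmid|\Gamma|$) gives $\Ho^1_\tau(\Gamma,B)\cong\Ho^1_\tau(\Gamma,T)$. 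One caveat in your version: the step ``lies in $N_G(T)\rtimes\langle\gamma\rangle$, in fact, after further conjugation by $N_G(T)$, in $T\rtimes\langle\gamma\rangle$'' is not correct as literally stated (already for $\tau=1$ a semisimple element of $N_G(T)$ need not be $N_G(T)$-conjugate into $T$); the statement you actually want to quote is that a semisimple element of $G.\gamma$ is $G$-conjugate into $T^{\tau,0}.\gamma$, which is \cite[Proposition 3.4]{mohrdieck:2003:conjugacy} and is also used by the paper.

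The genuine gap is injectivity. You assert that if $t_1=g\,t_2\,\tau(g)^{-1}$ with $t_i\in Z^1(\Gamma,T)$ then ``tracking the induced isomorphism on cocharacter lattices produces a Weyl-group element commuting with $\tau$'', but this is precisely the hard point and it is not carried out; the paper does not do it by hand either, but instead quotes the twisted Chevalley restriction theorem $T_\tau/W^\tau\cong G/\!/_\tau G$ together with the fact that each fiber of $G.\gamma\to G/\!/_\tau G$ contains a unique semisimple orbit (\cite[Theorem 1.1 and Proposition 3.18]{mohrdieck:2003:conjugacy}), and reads off injectivity from the resulting commutative diagram. To complete your direct argument you would need: (i) $T^{\tau,0}$ is a maximal torus of $Z_G(t_i.\gamma)^{0}$ and $Z_G(T^{\tau,0})=T$, so after multiplying $g$ by an element of $Z_G(t_1.\gamma)^{0}$ one may assume $g\in N_G(T)$; (ii) reducing $g\,t_2\,\tau(g)^{-1}=t_1$ modulo $T$ shows that the Weyl image $w$ of $g$ satisfies $w=\tau(w)$, so $w\in W^\tau$ automatically (this is the observation your sketch is missing); (iii) by Steinberg's theory of $G^\tau$ one may choose a $\tau$-fixed representative $n\in N_G(T)^\tau$ of $w$, and writing $g=ns$ with $s\in T$ yields $[t_1]=[w(t_2)]$ in $\Ho^1_\tau(\Gamma,T)$. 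Point (iii) is also what makes your map $\Ho^1_\tau(\Gamma,T)/W^\tau\to\Ho^1_\tau(\Gamma,G)$ well defined, which you flagged but left open. So the proposal is a viable strategy, with surjectivity essentially in place by a different (and arguably more direct) route than the paper's, but the injectivity half --- exactly the content the paper imports from \cite{mohrdieck:2003:conjugacy} --- is still missing as written.
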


We learned from \cite{Pappas-Rapoport:2022}, that one could also deduce this result from the proof of  \cite[Proposition 2.4]{pappas.zhu:2013}. We give here a different proof, relying on \cite{mohrdieck:2003:conjugacy} which focuses on the invariant theory of twisted conjugacy classes of simple algebraic groups. 

\smallskip

We first show that $\Ho^1_\tau(\Gamma,G)$ is a quotient of $\Ho^1_\tau(\Gamma, T)$. This fact actually holds without the assumption that $\gamma$ acts on $G$ by a diagram automorphism, and so we state and prove this result without this assumption. 

\begin{lemma}\label{lem:TsurgG} Assume that the generator $\gamma$ of $\Gamma$ acts on the simple group $G$ by an automorphism $\sigma$. Let $T$ be a maximal torus preserved by $\sigma$ and B a Borel subgroup containing T which is also preserved by $\sigma$. The natural inclusion $B \subseteq G$ and projection $B \to T$ induce a surjection $\Ho^1_\sigma(\Gamma,T) \to  \Ho^1_\sigma(\Gamma,G)$. \end{lemma}

\begin{proof} We first of all recall \cite[Lemma 7.3]{steinberg:1968:endomorphisms}, which  ensures that the map $G \times B \to G$ defined by $(g,b) \mapsto g b \sigma(g)^{-1}$ is surjective. This implies that we can write every element $x \in Z^1(\Gamma,G) \subseteq G$ as $x= g b \sigma(g)^{-1}$ for some $g \in G$ and $b \in B$. Note that since $x \sigma(x) \cdots \sigma^{r-1}(x)=1$ necessarily we have that also $b \sigma(b) \cdots \tau^{r-1}(b)=1$, that is $b \in Z^1(\Gamma,B) \subset Z^1(\Gamma,G)$. The equality $x= g b \sigma(g)^{-1}$ tells us that $x$ and $b$ define the same element of $\Ho^1_\sigma(\Gamma,G)$ and so that the inclusion of $B$ in $G$ induces a surjection  $\Ho^1_\sigma(\Gamma,B) \to  \Ho^1_\sigma(\Gamma,G)$. 

\smallskip 
We now show that $\Ho^1_\sigma(\Gamma,B) \cong \Ho^1_\sigma(\Gamma,T)$. To show this consider the exact sequence of algebraic groups
\begin{equation} \label{eq:UBT}\xymatrix{ 1 \ar[r] & U \ar[r] & B \ar[r] & T \ar[r] & 1}
\end{equation}
on which $\Gamma$ acts.

Denote by $U^{(\ell)}=[U^{(\ell-1)},U^{(\ell-1)}]$ the derived series of $U$, and set $U_\ell:=U^{(\ell)}/U^{(\ell+1)}$. By convention, $U^{(0)}=U$.  Then $U_\ell$ can be regarded as a vector space over $k$. Let $N$ be such that $U^{(N)} \neq 0$ and $U^{(N+1)} =0$.
For any linear action of $\Gamma$ on $U_\ell$, we have $H^i(\Gamma, U_\ell)=0$ for $i\geq 1$, since the characteristic of $k$ does not divide $|\Gamma|$. Note that the group $U^{(\ell)}$ is a normal subgroup of $B$, and setting $B_\ell:=B/U^{(\ell)}$ we obtain that $U_\ell$ is normal in $B_{\ell +1}$ and that $B_\ell = B_{\ell+1}/U_\ell$. Then, by \cite[Corollary 2, \S 5.5]{serre:1997:galois} and \cite[Corollary, \S 5.6]{serre:1997:galois}, there exists a natural bijection
\[ H^1_\sigma(\Gamma, B_\ell)\cong H_\sigma^1(\Gamma, B_{\ell+1})  \]  for any $\ell$.
Note that $B_0=T$ and $B_{N+1}=B$. It then follows that $\Ho^1_\sigma(\Gamma,B) \cong \Ho^1_\sigma(\Gamma,T)$.
\end{proof}

In order to show that \cref{prop:H1GTW} holds, we will use \cite[Theorem 1.1]{mohrdieck:2003:conjugacy}. This result states that the inclusion map $T \subset G$ induces an isomorphism of schemes 
\[\xymatrix{ T_\tau/ W^\tau \ar[r]^\cong & G/\!/_\tau G,}
\] where $T_\tau$ consists of the coinvariant $T/(1-\tau)T$ and $G/\!/_\tau G$ is the categorical quotient of $G$ by the action of $G$ on itself defined as $g*h := g h \tau(g)^{-1}$. 

Combining this statement with \cref{def:parahoric}, we obtain the commutative diagram
\[\xymatrix{\Ho^1_\tau(\Gamma,T) \ar@{->>}[r]\ar[d] & \Ho^1_\tau(\Gamma,G) \ar[d] \ar@{^(->}[r] & G(k)/\sim_\tau \ar[dl] \\
T_\tau / W^\tau \ar[r]^\cong & (G/\!/_\tau G)(k)
}\]
where $\sim_\tau$ identifies two elements $g$ and $g'$ of $G(k)$ if and only if $g = h g' \tau(h)^{-1}$ for some $h \in G(k)$. To conclude the proof of \cref{prop:H1GTW}, we are then left to show that the map $\Ho^1_\tau(\Gamma,G) \to (G/\!/_\tau G)(k)$ is injective. 

\smallskip 

Since $\Gamma$ acts on $G$, we can build the disconnected group scheme $\widetilde{G}=G \rtimes \Gamma$, whose elements will be denoted $g.\gamma$ for $g \in G$ and $\gamma_i \in \Gamma$. Using this language $G/\!/_\tau G$ is the categorical quotient of the component $G.\gamma$ by conjugation and $G(k)/\sim_\tau=G(k).\gamma/\!\sim$ consists of conjugacy classes of elements in the component $G.\gamma$. In \cite[Proposition 3.18]{mohrdieck:2003:conjugacy}, it is shown that every fiber of the map $G.\gamma \to G/\!/_\tau G$ contains exactly one orbit consisting of semisimple elements. In particular the restriction of $G(k).\gamma/\!\sim  \; \to (G/\!/_\tau G)(k)$ to semisimple elements in $G.\gamma$ is injective. We then focus our attention on semisimple elements of $\widetilde{G}$.

As for connected groups, an element $g.\gamma$ of $\widetilde{G}$ is semisimple if there exists a linear representations $\rho \colon \widetilde{G} \to \text{GL}_n$, such that $\rho(g.\gamma)$ is diagonalizable. For a connected group $G$, an element is semisimple if and only if it is conjugated to an element of the torus $T$. In the following lemma, we will see that an analogous statement holds in this situation. Using the terminology introduced in \cite{springer:2006:twisted}, we say that an element $g \in G$ is \textit{$\tau$-semisimple} if and only if $g$ is $\tau$-conjugated to an element of $T$, i.e. there exists $h \in G$ such that $h g \tau(h)^{-1} \in T$.

\begin{lemma} An element $g \in G$ is $\tau$-semisimple if and only if $g.\gamma$ is semisimple in $\widetilde{G}$.
\end{lemma}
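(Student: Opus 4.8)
The plan is to prove both implications by relating semisimplicity in $\widetilde{G} = G \rtimes \Gamma$ to $\tau$-semisimplicity via a faithful linear representation and the observation that $g.\gamma$ has finite order ``up to $G$''.

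\medskip

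\textbf{Sketch of proof.} First I would reduce the definition of semisimplicity to something checkable. Fix a faithful representation $\rho\colon \widetilde{G} \hookrightarrow \GL_n$; since $\Gamma$ is finite and $\rho$ is a closed immersion, an element $\widetilde{g} = g.\gamma \in \widetilde{G}$ is semisimple (in the intrinsic sense, i.e. for every representation its image is semisimple) if and only if $\rho(g.\gamma)$ is a semisimple matrix, and this in turn is independent of the choice of faithful $\rho$ by the usual argument (any representation is a subquotient of a sum of copies of a faithful one, tensored up).

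For the direction ($g.\gamma$ semisimple $\Rightarrow$ $g$ is $\tau$-semisimple): suppose $\rho(g.\gamma)$ is diagonalizable. Note $(g.\gamma)^r = \big(g\,\gamma(g)\cdots\gamma^{r-1}(g)\big).\gamma^r$ lies in $G$ (as $\gamma^r$ acts trivially — here $r = \mathrm{ord}(\tau)$, and one can first replace $\Gamma$ by the subgroup it generates modulo the kernel of its action, or simply note $\gamma^r$ is inner/trivial on $G$ since $\tau$ has order $r$; more precisely work with the order $r$ of $\tau$ so that $\gamma^r$ acts as $\tau^r = 1$ on $G$), hence $h_0 := (g.\gamma)^r \in G$ is a genuine semisimple element of the connected group $G$. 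Therefore $h_0$ lies in some maximal torus, and after conjugating by an element of $G$ we may assume $h_0 \in T$. Now $g.\gamma$ normalizes the centralizer $Z := Z_G(h_0)^\circ$, which is a connected reductive group containing $T$, and $g.\gamma$ acts on $Z$ as an automorphism. Its image in $Z.\gamma \subseteq Z \rtimes \langle\gamma\rangle$ is a semisimple element whose $r$-th power is central (it equals $h_0$), so it is a semisimple automorphism of $Z$ normalizing... — the cleanest route here is to invoke Steinberg's structure theory (\cite{steinberg:1968:endomorphisms}): a semisimple element of a (possibly disconnected) group whose identity component is reductive lies in the normalizer of a maximal torus of the identity component; applied to $Z \rtimes \langle \gamma\rangle$ this shows, after a further $G$-conjugation, that $g.\gamma$ normalizes a $\gamma$-stable maximal torus $T'$ of $G$ and acts on it in a way that we can conjugate (inside $N(T')$) so that $g \in T'$. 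Transporting back by an inner automorphism to $T$ gives $h g \tau(h)^{-1} \in T$ for some $h \in G$, i.e. $g$ is $\tau$-semisimple.

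For the converse ($g$ $\tau$-semisimple $\Rightarrow$ $g.\gamma$ semisimple): by definition we may $\tau$-conjugate $g$ into $T$, and $\tau$-conjugation by $h$ corresponds exactly to conjugation of $g.\gamma$ by $h$ inside $\widetilde{G}$ (since $h(g.\gamma)h^{-1} = (h g \tau(h)^{-1}).\gamma$), so it suffices to show $t.\gamma$ is semisimple for $t \in T$. But $T \rtimes \langle\gamma\rangle$ is a group with reductive — indeed diagonalizable — identity component $T$ on which $\gamma$ acts by the diagram automorphism, and $(t.\gamma)^r \in T$; an element of such a group lying over a torus is semisimple (its image in $\GL_n$ under any representation has finite-order part in a torus and hence is diagonalizable — concretely, $(t.\gamma)^r = t\tau(t)\cdots\tau^{r-1}(t) \in T$ is semisimple in $T$, and a matrix whose $r$-th power is diagonalizable, with $\car k \nmid r$, is itself diagonalizable). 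Hence $t.\gamma$ is semisimple, completing the proof.

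\medskip

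\textbf{Main obstacle.} The delicate point is the forward direction: passing from ``$(g.\gamma)^r \in G$ is semisimple'' to ``$g.\gamma$ itself normalizes a $\gamma$-stable maximal torus on which it can be put into $T$''. This requires the structure theory of semisimple elements in disconnected reductive groups — essentially Steinberg's results, or equivalently the fact (used elsewhere in the paper via \cite{steinberg:1968:endomorphisms} and \cite{mohrdieck:2003:conjugacy}) that the component $G.\gamma$ is a union of $\tau$-conjugates of $T.\gamma$ together with unipotent corrections, and that the semisimple ones are exactly those $\tau$-conjugate into $T$. Care is also needed that $\car(k) \nmid |\Gamma|$ (equivalently $\nmid r$) is used precisely in the ``$r$-th power diagonalizable $\Rightarrow$ diagonalizable'' step and in ensuring semisimplicity descends through the finite quotient $\langle\gamma\rangle$.
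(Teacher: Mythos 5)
Your converse direction is essentially sound (and is in spirit what the paper does by citing the last part of \cite[Lemma 3.5]{mohrdieck:2003:conjugacy}): $\tau$-conjugation of $g$ corresponds to honest conjugation of $g.\gamma$, and the image of $t.\gamma$ in any representation is an invertible matrix whose $m$-th power is diagonalizable, hence diagonalizable since $\car(k)\nmid m$. (Two small caveats: state the ``$r$-th power diagonalizable $\Rightarrow$ diagonalizable'' fact for \emph{invertible} matrices, since it fails for nilpotents; and work with $m=|\Gamma|$ rather than $r=\mathrm{ord}(\tau)$, because in the paper's setting $\gamma^r$ need not be trivial in $\Gamma$, so $(g.\gamma)^r$ need not lie in $G$ --- whereas $(g.\gamma)^m$ does.)

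The forward direction, however, has a genuine gap exactly at the step that carries all the content. After invoking Steinberg to get that $g.\gamma$ normalizes a maximal torus $T'$, you claim one can conjugate ``inside $N(T')$'' so that $g\in T'$. This is false as stated: already for $\tau$ trivial and $G=\SL_2$, the element $g=\left(\begin{smallmatrix}0&1\\-1&0\end{smallmatrix}\right)$ is semisimple and normalizes the diagonal torus $T'$, but conjugation by $N(T')$ preserves its nontrivial image in the Weyl group, so no $N(T')$-conjugate of $g$ lies in $T'$. Normalizing a maximal torus is simply too weak; what you need is the full strength of Steinberg's theorem that a \emph{semisimple} automorphism of a connected group stabilizes a Borel subgroup together with a maximal torus of it. Applying this to $\Ad_g\circ\tau$ gives a stabilized pair $(B',T')$; conjugating that pair by an arbitrary element $x\in G$ to the $\tau$-stable pair $(B,T)$ replaces $g$ by $g'=xg\tau(x)^{-1}$ with $\Ad_{g'}\circ\tau$ stabilizing $(B,T)$, hence $\Ad_{g'}$ stabilizing $(B,T)$, hence $g'\in N_G(B)\cap N_G(T)=T$ --- note this is a $\tau$-conjugation by a general element of $G$, not by an element of $N(T')$. (The paper short-circuits all of this by citing \cite[Proposition 3.4]{mohrdieck:2003:conjugacy} with $z=1.\gamma$, which directly conjugates $g.\gamma$ into $T^{\tau,0}.\gamma$.) Your detour through $Z:=Z_G(h_0)^\circ$ is also shaky: to speak of the image of $g.\gamma$ in $Z\rtimes\langle\gamma\rangle$ you would need $g\in Z$, which you have not shown (commutation of $g.\gamma$ with $h_0$ only gives $g\tau(h_0)g^{-1}=h_0$); in any case the centralizer is unnecessary, since Steinberg's theorem applies directly to the automorphism $\Ad_g\circ\tau$ of $G$.
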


\begin{proof} We first of all prove that if $g.\gamma$ is semisimple, then $g$ is $\tau$-semisimple by applying \cite[Proposition 3.4]{mohrdieck:2003:conjugacy}. Applying that result to the element $z=1.\gamma \in \widetilde{G}$ we indeed obtain that $g.\gamma$ is conjugate, via an element $h.1 \in \widetilde{G}$ to an element of $T^{\tau,0}.\gamma$. Since $\gamma$ acts on $G$ by $\tau$, this is equivalent to $g$ being $\tau$-conjugate to an element of $T^{\tau,0}$, hence being $\tau$-semisimple.

Conversely, assume that $g=ht\tau(h)^{-1}$ for some $h \in G$ and $t \in T$. This is equivalent to the element $g.\gamma \in \widetilde{G}$ being  conjugated to $t.\gamma$ via the element $h.1$. Since semisimplicity of an element in invariant under conjugation, it is then enough to show that $t.\gamma$ is a semisimple element in $\widetilde{G}$. This is a direct consequence of the last part of \cite[Lemma 3.5]{mohrdieck:2003:conjugacy} using $z=1.\gamma$.
\end{proof}

\begin{proof}[Proof of \cref{prop:H1GTW}] By the above arguments, it is enough to show that every element of $\Ho^1_\tau(\Gamma,G)$ can be represented by a $\tau$-semisimple element of $G$. This is equivalent to showing that $\Ho^1_\tau(\Gamma,T)$ surjects onto $ \Ho^1_\tau(\Gamma,G)$, and this is guaranteed by \cref{lem:TsurgG}.
\end{proof}
\subsection{Cohomology of tori} \label{sec:cohomtori} Given the results of \cref{prop:H1GTW}, in order to give an explicit description of the local types of $(\Gamma, G)$-bundles through non abelian cohomology, we focus our attention on the computation of $\Ho^1(\Gamma,T)$.

\begin{lemma} \label{lem:eq*} Let $T_m$ denote the $m$-th torsion elements of $T$. 
  The inclusion map $T_m \to T$ induces an isomorphism $\Ho^1_\tau(\Gamma,T) \cong \Ho^1_\tau(\Gamma,T_m)$.    
\end{lemma}

\begin{proof} Let $(-)^m$ denote the map $T \to T$ given by $t \mapsto t^m$. From the exact sequence
\[ \xymatrix{ 0 \ar[r]& T_m \ar[r] & T \ar[r]^{(-)^m} & T \ar[r] & 0
}\] we obtain the long exact sequence
\[\xymatrix{ T^\tau \ar[r]^-{{(-)^m}} & T^\tau \ar[r] & \Ho^1_\tau(\Gamma,T_m)\ar[r] &\Ho^1_\tau(\Gamma,T)\ar[r]^-{{(-)^m}} &\Ho^1_\tau(\Gamma,T).
}\] Observe that if $t$ is a cocycle, i.e. belongs to $\text{Z}^1(\Gamma,T)$, then $t^m$ is zero in $\Ho^1_\tau(\Gamma,T)$ since it can be written as $s\tau(s)^{-1}$ for $s=\prod_{i=0}^{m-2}\tau^i(t)^{m-1-i}$. It follows that $\Ho^1_\tau(\Gamma,T_m)$ surjects onto $\Ho^1_\tau(\Gamma,T)$. Moreover, since $G$ is simply connected (or adjoint) the group $T^\tau$ is connected, so that the induced map $(-)^m$ between invariants is necessarily surjective, hence $\Ho^1_\tau(\Gamma,T_m)$ injects into $\Ho^1_\tau(\Gamma,T)$. Thus we obtain $\Ho^1_\tau(\Gamma,T_m)\cong \Ho^1_\tau(\Gamma,T)$ as claimed. \end{proof}

\begin{prop} \label{prop:H1Tcomb} Let $\Gamma$ be a cyclic group of order $m$ acting on the simple algebraic group $G$ over $k$ by the (diagram) automorphism $\tau$ of order $r$. If  $\car(k)$ does not divide $m$ and $G$ is either simply connected or adjoint, then we have a natural identification
\[ \Ho^1_\tau(\Gamma,T) \cong \frac{\tfrac{r}{m}\coX(T)^\tau}{\norm_\tau(\coX(T))}=\frac{\tfrac{1}{m}\coX(T)^\tau}{\Av_\tau(\coX(T))},\]
where $\norm_\tau$
 is the norm operator 
 $1+ \tau + \dots + \tau^{r-1}$
  and $\Av_\tau=\frac{1}{r}\norm_\tau$.
\end{prop}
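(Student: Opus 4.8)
The plan is to compute $\Ho^1_\tau(\Gamma,T)$ directly from the definition of non-abelian cohomology of a cyclic group, using that $T$ is abelian so the cohomology is honest Tate cohomology. For a cyclic group $\Gamma = \langle\gamma\rangle$ of order $m$ acting on the abelian group $T(k)$, we have $Z^1(\Gamma,T) = \ker(\norm_{\tau,m} \colon T(k) \to T(k))$ and the coboundaries are $(1-\tau)T(k)$, so $\Ho^1_\tau(\Gamma,T) \cong \ker(\norm_{\tau,m})/(1-\tau)T(k)$. The first step is to identify this quotient with a lattice-theoretic object via the exponential/cocharacter description $T(k) \cong \coX(T)\otimes_\ZZ k^\times$. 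When $G$ is simply connected or adjoint, $\coX(T)$ is respectively the coroot lattice or the coweight lattice, and in either case one can use that $k$ is algebraically closed (so $k^\times$ is divisible, hence $-\otimes_\ZZ k^\times$ is exact on the relevant torsion-free lattices, and $\car(k)\nmid m$ ensures $\mu_m(k)$ has order $m$) to reduce the computation of the kernel and cokernel to a purely combinatorial statement about the $\ZZ[\Gamma]$-module $\coX(T)$.

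Next I would carry out the lattice computation. The key identity is that over $\QQ$ one has $\coX(T)_\QQ = \coX(T)^\tau_\QQ \oplus (1-\tau)\coX(T)_\QQ$, with the averaging idempotent $\Av_\tau$ projecting onto the first summand. Using that $\norm_{\tau,m} = \tfrac{m}{r}\norm_\tau = m\,\Av_\tau$ as operators on $\coX(T)$, one checks $\ker(\norm_{\tau,m}$ on $T(k))$ fits in an exact sequence involving $\coX(T)^\tau \otimes \mu_m(k)$ and $(1-\tau)\coX(T)$; passing to the quotient by $(1-\tau)T(k)$ kills the $(1-\tau)$ part and leaves $\coX(T)^\tau/\norm_{\tau,m}(\coX(T))$. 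Concretely I expect a snake-lemma argument: apply $-\otimes_\ZZ k^\times$ to $0 \to \coX(T) \xrightarrow{\norm_{\tau,m}} \coX(T)^\tau \to \coX(T)^\tau/\norm_{\tau,m}\coX(T) \to 0$ (one must first verify $\norm_{\tau,m}$ does land in $\coX(T)^\tau$ and compute its kernel on the lattice, which is where $G$ simply connected or adjoint enters — for a general isogeny type the lattice $\coX(T)$ need not have $(1-\tau)$-part complemented integrally), and compare with the Tate cohomology sequence. The three displayed expressions are then identified by the elementary relations $\norm_{\tau,m} = m\Av_\tau$, $\norm_\tau = r\Av_\tau$, after scaling: dividing the ambient lattice by $m$ (resp. by $r/m$) converts $\norm_{\tau,m}(\coX(T))$ into $\Av_\tau(\coX(T)) \subset \tfrac1m\coX(T)^\tau$ (resp. into $\norm_\tau(\coX(T)) \subset \tfrac{r}{m}\coX(T)^\tau$), and the three quotients agree tautologically.

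The main obstacle I anticipate is the integral (as opposed to rational) bookkeeping: showing that for $G$ simply connected or adjoint the map $\norm_{\tau,m}\colon \coX(T)\to\coX(T)^\tau$ has the expected kernel and that $-\otimes_\ZZ k^\times$ introduces no spurious torsion, i.e. that $\Tor_1^\ZZ(\coX(T)^\tau/\norm_{\tau,m}\coX(T),\,k^\times)$ and related $\Tor$ terms either vanish or are accounted for by $\mu_m(k)$. This is precisely the point where the hypotheses "$G$ simply connected or adjoint" and "$\car(k)\nmid m$" are used, and where one cannot get away with a purely rational argument; I would handle it by working case-by-case with the explicit $\tau$-action on the (co)root and (co)weight lattices from Example \ref{eg:tabledynkin}, or more uniformly by invoking that these lattices are permutation-induced up to the relevant index so that their Tate cohomology is computable. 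The rest — exactness of $-\otimes k^\times$, the three reformulations — is routine.
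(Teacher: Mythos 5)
Your route is genuinely different from the paper's, and it can be made to work, but one step as written is broken: the sequence $0 \to \coX(T) \xrightarrow{\norm_{\tau,m}} \coX(T)^\tau \to \coX(T)^\tau/\norm_{\tau,m}\coX(T) \to 0$ is not exact on the left whenever $\tau \neq 1$, since $\norm_{\tau,m}=m\Av_\tau$ kills the whole "$(1-\tau)$-part" $\coX(T)\cap(1-\tau)\coX(T)_\QQ$, which has positive rank; so the snake-lemma comparison cannot be run on that sequence as stated (you flag the kernel parenthetically, but the argument needs to be rebuilt around it, not patched). The clean way to execute your strategy is to keep your starting point $\Ho^1_\tau(\Gamma,T)=\ker(\norm_{\tau,m})/(1-\tau)T(k)$ and $T(k)\cong\coX(T)\otimes_\ZZ k^\times$, and then use that $k^\times$ is divisible, hence splits as $\mu_\infty(k)\oplus V$ with $V$ uniquely divisible: the $V$-part contributes nothing, and for the torsion part the coefficient sequence $0\to\coX(T)\to\coX(T)_\QQ\to\coX(T)\otimes\QQ/\ZZ\to 0$ gives $\Ho^1_\tau(\Gamma,\coX(T)\otimes\QQ/\ZZ)\cong\Ho^2_\tau(\Gamma,\coX(T))=\coX(T)^\tau/\norm_{\tau,m}\coX(T)$ by cyclic periodicity, with the hypothesis $\car(k)\nmid m$ exactly disposing of the missing $p$-primary roots of unity. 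Note that, done this way, the simply connected/adjoint hypothesis is not where you say it is: this direct computation does not need it at all (consistently with \cref{pp:H1H2}, where the paper observes the hypothesis is superfluous over $\CC$), so your proposed case-by-case check of the lattices from \cref{eg:tabledynkin} is dispensable rather than essential.

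For comparison, the paper argues differently: it first identifies $\Ho^1_\tau(\Gamma,T)$ with $\Ho^1_\tau(\Gamma,T_m)$ via the Kummer sequence for the $m$-th power map --- surjectivity onto $\Ho^1_\tau(\Gamma,T)$ is an explicit cocycle computation, while injectivity uses that $(-)^m$ is surjective on $T^\tau$, i.e.\ that $T^\tau$ is connected, which is the first place the simply connected/adjoint hypothesis enters --- and then uses $0\to\coX(T)\xrightarrow{m}\coX(T)\to T_m\to 0$ together with the vanishing $\Ho^1_\tau(\Gamma,\coX(T))=0$ (checked from the explicit permutation action of $\tau$ on the coroot/coweight lattice; the second place the hypothesis enters) to get $\Ho^1_\tau(\Gamma,T_m)\cong\Ho^2_\tau(\Gamma,\coX(T))$. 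So the paper's proof is Kummer-theoretic and genuinely uses the isogeny-type hypothesis, whereas your approach, once the exact-sequence slip is repaired as above, is a direct Tate-cohomology computation that trades those group-theoretic inputs for the structure of $k^\times$ as an abstract divisible group; the final rescalings identifying the three displayed quotients are, as you say, tautological from $\norm_{\tau,m}=\tfrac{m}{r}\norm_\tau=m\Av_\tau$.
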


\begin{proof} We first of all observe that since the group $\Gamma$ is cyclic, one recovers that $\Ho^2_\tau(\Gamma,\coX(T))$ is isomorphic to $\coX(T)^\tau/\norm_{\tau,m}(\coX(T))$ (see, e.g. \cite[Example 2, page 58-59]{brown:1982:cohomology}), where $\norm_{\tau,m}$ denotes $1+\tau + \dots + \tau^{m-1}$. It thus remains to prove that $\Ho^1_\tau(\Gamma,T) \cong \Ho^2 (\Gamma,\coX(T))$. Moreover, in view of \cref{lem:eq*}, we have
\begin{equation} \label{eq:*}\Ho^1_\tau(\Gamma,T) \cong \Ho^1_\tau(\Gamma,T_m), \end{equation} where $T_m$ denotes the $m$-th torsion elements of $T$.

We next have the exact sequence of abelian groups
\begin{equation}\label{eq:XmXT} \xymatrix{0 \ar[r] &\coX(T) \ar[r]^{m\cdot} & \coX(T)\ar[r] & T_m \ar[r] &0,
}
\end{equation} where we have identify $T_m$ with the quotient $\coX(T)/m\coX(T)$ as follows. Choose a basis $\lambda_1,\dots, \lambda_\ell$ of $\coX(T)$, so that $\coX(T) \cong \ZZ^\ell$. Similarly, we fix an isomorphism $T \cong (k^\times)^\ell$, which identifies $T_m$ with $\mu_m^\ell$. The map $\ZZ^\ell \to (k^\times)^\ell$ given by $(a_1,\dots,a_\ell) \mapsto (\zeta^{a_1},\dots, \zeta^{a_\ell})$, induces the wanted isomorphism. We can then take $\tau$-invariants of \eqref{eq:XmXT}, obtaining the short exact sequence
\[\xymatrix{0 \ar[r] &\Ho^1_\tau(\Gamma,\coX(T)) \ar[r] & \Ho^1_\tau(\Gamma,T_m) \ar[r] & \Ho^2_\tau(\Gamma,\coX(T))\ar[r] &0,
}
\] where we have used the fact that the multiplication map $m\, \cdot \; \colon \coX(T) \to \coX(T)$ induces the zero maps $\Ho^1_\tau(\Gamma,\coX(T)) \to \Ho^1_\tau(\Gamma,\coX(T))$ and $\Ho^2_\tau(\Gamma,\coX(T)) \to \Ho^2_\tau(\Gamma,\coX(T))$ (see e.g., \cite[\S 7.7]{sharifi:notes}).

In order to conclude, it suffices to show that $\Ho^1_\tau(\Gamma,\coX(T))=0$. When $G$ is simply connected (or adjoint), we can identify $\coX(T)$ with $\ZZ^\ell$, with the action of $\tau$ given  by either
\[\tau(a_1, \dots, a_\ell) = (a_\ell, \dots, a_1) \qquad \text{if $\tau$ has order $2$}\]
or
\[
\tau(a_1, a_2, a_3)= (a_2,a_3,a_1) \qquad \text{if $\tau$ has order $3$ (and so necessarily $G$ of type $D_4$)}.
\] In both cases, a direct computation shows that $\Ho^1_\tau(\Gamma,\ZZ^\ell)=0$, implying the isomorphism $\Ho^1_\tau(\Gamma,T_m) \cong \Ho^2_\tau(\Gamma, \coX(T))$ which, combined with \eqref{eq:*}, allows us to conclude.\end{proof}

\begin{rmk} \label{rmk:overC} When $k=\CC$, one does not need to assume that $G$ is adjoint or simply connected to obtain the isomorphism of \cref{prop:H1Tcomb}. In fact we have the exact sequence
\begin{equation}  \label{eq:XhT} \xymatrix{ 0 \ar[r] & \coX(T) \ar[r] &  \mathfrak{h} \ar[r]^{e} & T \ar[r] & 0, }
\end{equation} where $e(h):=\exp(2\pi i h)$ and we can identify $\mathfrak{h}$ with $\coX(T) \otimes_\ZZ \CC$. 

The action of $\Gamma$ on $T$ induces an action on $\mathfrak{h}$ and on $\coX(T)$, which we still denote $\tau$. Taking $\tau$-invariants of \eqref{eq:XhT} (seen as an exact sequence of abelian groups) we obtain the following long exact sequence of abelian groups
\begin{equation*}  \label{eq:XhTlong} \xymatrix@C=1.5em{ 0 \ar[r] & \coX(T)^\tau \ar[r] &  \mathfrak{h}^\tau \ar[r]^{e} & T^\tau \ar[r] & \Ho^1_\tau(\Gamma, \coX(T))  \ar[r]& 0 \ar[r] & \Ho^1_\tau(\Gamma,T) \ar[r] & \Ho^2(\Gamma, \coX(T)) \ar[r] & 0,  }
\end{equation*}  hence the desired isomorphism claimed in \cref{prop:H1Tcomb}.

Moreover, given an element $[t] \in \Ho^1_\tau(\Gamma,T)$, we explicitly describe the associated equivalence class in $\frac{1}{m}\coX(T)^\tau/\Av_\tau(\coX(T))$. Choose $h \in \mathfrak{h}$ such that $t=e(h)$, then the natural map $\Ho^1_\tau(\Gamma,T) \cong \frac{1}{m}\coX(T)^\tau/\Av_\tau(\coX(T))$ identifies $[e(h)]=[t]$  to the element $\Av_\tau(h)= \frac{1}{m}[h+\tau(h)+ \dots + \tau^{m-1}(h)]$ of $\frac{1}{m}\coX(T)^\tau/\Av_\tau(\coX(T))$. This can be seen from the explicit computation of the cohomology of cyclic groups as in \cite[Example 2, page 58-59]{brown:1982:cohomology}. \end{rmk}

\subsection{Cohomology of simple groups}\label{sec:H1G} We now combine \cref{prop:H1GTW}, \cref{prop:H1Tcomb} and \cref{rmk:overC} to deduce the following result.

\begin{theorem} \label{thm:H1tauG} Let $\Gamma =\langle \gamma \rangle$ be the cyclic group of order $m$ such that $\gamma$ acts on $G$ via a diagram automorphism $\tau$ of order $r$. Then if either $k = \CC$ or $G$ is simply connected or adjoint (and $\car(k)$ does not divide $m$), then 
\[ \Ho^1_\tau(\Gamma,G) \cong \dfrac{\frac{1}{m}\coX(T)^\tau}{\Av_\tau(\coX(T)) \rtimes W^\tau} = \dfrac{\frac{r}{m}\coX(T)^\tau}{\norm_\tau(\coX(T)) \rtimes W^\tau},
\] where $\norm_\tau := r \Av_\tau$.
\end{theorem}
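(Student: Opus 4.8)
The plan is to simply assemble \cref{thm:H1tauG} from the two structural results already proved. By \cref{prop:H1GTW} we have a bijection $\Ho^1_\tau(\Gamma,G) \cong \Ho^1_\tau(\Gamma,T)/W^\tau$, valid whenever $\gamma$ acts by a diagram automorphism. By \cref{prop:H1Tcomb} (under the hypothesis that $G$ is simply connected or adjoint and $\car(k)\nmid m$), and also by \cref{pp:H1H2} when $k=\CC$ without the restriction on the isogeny type, we have a natural identification
\[
\Ho^1_\tau(\Gamma,T) \cong \frac{\tfrac1m \coX(T)^\tau}{\Av_\tau(\coX(T))}.
\]
So the first step is to observe that these two identifications are compatible: the $W^\tau$-action on $\Ho^1_\tau(\Gamma,T)$ transports to the natural $W^\tau$-action on $\tfrac1m\coX(T)^\tau/\Av_\tau(\coX(T))$ induced from the action of $W^\tau$ on $\coX(T)$ (which preserves $\coX(T)^\tau$ and $\Av_\tau(\coX(T))$ since $\tau$ and $W^\tau$ commute, $W^\tau$ being the Weyl group of the folded system). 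This is immediate from the explicit form of the isomorphism in \cref{prop:H1Tcomb} / \cref{pp:H1H2}: an element $[t]\in\Ho^1_\tau(\Gamma,T)$ with $t = e(h)$ goes to $[\Av_\tau(h)]$, and $W^\tau$ acts on $T$ compatibly with its linear action on $\mathfrak h$ (or, characteristic-free, on $T_m$ and on $\coX(T)/m\coX(T)$ via its action on $\coX(T)$), so the bijection is $W^\tau$-equivariant and descends to the quotients.

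The second step is purely formal rewriting: quotienting $\tfrac1m\coX(T)^\tau/\Av_\tau(\coX(T))$ further by $W^\tau$ gives $\tfrac1m\coX(T)^\tau \big/ \bigl(\Av_\tau(\coX(T))\rtimes W^\tau\bigr)$, where the semidirect product notation records that $W^\tau$ normalizes the translation lattice $\Av_\tau(\coX(T))$ inside the group of affine transformations of $\coX(T)^\tau\otimes\RR$. Finally, multiplying numerator and denominator through by $r$ converts $\Av_\tau = \tfrac1r\norm_\tau$ into $\norm_\tau$ and $\tfrac1m\coX(T)^\tau$ into $\tfrac rm\coX(T)^\tau$, yielding the second displayed formula; this is just the change of normalization already recorded in \cref{prop:H1Tcomb}.

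I do not expect a genuine obstacle here — the theorem is a corollary, and the only point requiring a word of justification is the $W^\tau$-equivariance of the identification $\Ho^1_\tau(\Gamma,T)\cong \tfrac1m\coX(T)^\tau/\Av_\tau(\coX(T))$, i.e.\ that the $W^\tau$ appearing in \cref{prop:H1GTW} really is the same $W^\tau$ acting by its standard reflection representation on the (co)character lattice, so that the two quotients can be combined. One should also remark that in the $k=\CC$ case the hypothesis of \cref{prop:H1GTW} (diagram automorphism) is still in force, so there is no gap; the only thing $k=\CC$ buys us is the removal of the simply-connected/adjoint restriction in the torus computation via \cref{pp:H1H2}. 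With that observed, the proof is a one-line concatenation.

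\begin{proof} By \cref{prop:H1GTW} there is a bijection $\Ho^1_\tau(\Gamma,G)\cong \Ho^1_\tau(\Gamma,T)/W^\tau$, where $W^\tau$ acts on $\Ho^1_\tau(\Gamma,T)$ through its action on $T$. Under the hypotheses ($k=\CC$, or $G$ simply connected or adjoint, with $\car(k)\nmid m$), \cref{prop:H1Tcomb} and \cref{pp:H1H2} give a natural identification
\[
\Ho^1_\tau(\Gamma,T)\cong \frac{\tfrac1m\coX(T)^\tau}{\Av_\tau(\coX(T))},
\]
sending $[e(h)]$ to $[\Av_\tau(h)]$ (respectively, in the characteristic-free case, realized through $T_m\cong \coX(T)/m\coX(T)$). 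Since $\tau$ and $W^\tau$ commute and $W^\tau$ acts on $T$ compatibly with its standard reflection action on $\coX(T)$ (equivalently on $\mathfrak h=\coX(T)\otimes\CC$, or on $T_m$), this identification is $W^\tau$-equivariant for the action of $W^\tau$ on $\tfrac1m\coX(T)^\tau/\Av_\tau(\coX(T))$ induced from its action on $\coX(T)$; note $W^\tau$ preserves both $\coX(T)^\tau$ and the sublattice $\Av_\tau(\coX(T))$. Passing to $W^\tau$-quotients therefore yields
\[
\Ho^1_\tau(\Gamma,G)\cong \frac{\tfrac1m\coX(T)^\tau}{\Av_\tau(\coX(T))\rtimes W^\tau},
\]
where $\Av_\tau(\coX(T))\rtimes W^\tau$ is viewed inside the group of affine transformations of $\coX(T)^\tau\otimes\RR$, with $W^\tau$ normalizing the lattice of translations $\Av_\tau(\coX(T))$. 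Finally, rescaling by the factor $r$ and using $\Av_\tau=\tfrac1r\norm_\tau$ gives the equivalent description
\[
\Ho^1_\tau(\Gamma,G)\cong \frac{\tfrac rm\coX(T)^\tau}{\norm_\tau(\coX(T))\rtimes W^\tau},
\]
with $\norm_\tau = r\Av_\tau$. \end{proof}
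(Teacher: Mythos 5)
Your proof is correct and follows exactly the route the paper itself takes: \cref{thm:H1tauG} is stated there as an immediate combination of \cref{prop:H1GTW} and \cref{prop:H1Tcomb} (with \cref{pp:H1H2} covering the $k=\CC$ case), and your extra remark on the $W^\tau$-equivariance of the torus identification just makes explicit a point the paper leaves implicit.
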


\section{Parahoric group schemes}\label{sec:local}

The main result of this section is to realize every parahoric group scheme through invariant sections of restriction of scalars. 

\subsection{Parahoric group schemes from invariants} Throughout this section  $G$ is a simple group over $k$  which is assumed to be simply connected. We will denote by $G_\ad$ the adjoint quotient of $G$.  Let $\Gamma$ be the cyclic group of order $m$ and assume that $\car(k)$ does not divide $m$. Assume that a generator of $\Gamma$ acts on $G$ (and on $G_\ad$) by a diagram automorphism $\tau$ preserving the maximal tori $T$ and $T_\ad$. In view of \cref{thm:H1tauG}, for any $\kappa\in \Ho^1_\tau(\Gamma, G_\ad)$ we can choose $\kappa=[\bar{t}]$ for some $\bar{t}\in T_\ad^{\tau}$. We then let $\sigma$ be the automorphism $\Ad_{\bar{t}} \circ \tau$. Then $\sigma^m=1$. 
Moreover, $\bar{t}$ can be written as $\zeta_m^{\lambda}$, for $\lambda \in \coX(T_\ad)$.
The group $\Gamma$ acts also on $\Kc_m$ by $\gamma(z^{\frac{1}{m}})=\zeta_m^{-1}z^{\frac{1}{m}}$. It follows that we can define two actions of $\Gamma$ on the group scheme $G_{\Kc_m}$, one induced by $\sigma$ and another one by $\tau$. We can then define two group schemes over $\Kc$, namely \[ G_{\sigma,\Kc}:=\Res_{\Kc_m/\Kc}(G_{\Kc_m})^\sigma\qquad \text{ and }\qquad G_{\tau,\Kc}:=\Res_{\Kc_r/\Kc}(G_{\Kc_r})^\tau,\]
whose global sections are contained in $G(\Kc_m)$.

We now show that these group schemes are naturally isomorphic. This can be interpreted as a group version of the Lie algebra isomorphism between $\g(\Kc_r)^\tau$ and $\g(\Kc_m)^\sigma$ (see e.g. \cite[Chapter 8]{kac:1990:infinite}). The key ingredient is to explicitly relate the two actions $\sigma$ and $\tau$ on $G_{\Kc_m}$. Choose an isomorphism between $\coX(T_\ad)$ and $\ZZ^\ell$ so that the element $\lambda$ corresponds to $(\lambda_1, \dots, \lambda_\ell)$. Similarly, we identify the torus $T_\ad$ with $\Gm^\ell$ and let $z^\theta$ denote the element of $T_\ad(\Kc_m)$ given by $(z^{\frac{\lambda_1}{m}}, \dots, z^{\frac{\lambda_\ell }{m}})$. In fact, $\theta=\frac{\lambda}{m}\in \frac{1}{m}\coX(T_\ad)^\tau$.

\begin{prop} \label{prop:kacgroup} The map $\Ad_{z^\theta}$ induces an isomorphism of group schemes
\[ \xymatrix{\Res_{\Kc_r/\Kc}(G_{\Kc_r})^\tau \, \ar[r]^-{\cong} & \Res_{\Kc_m/\Kc}(G_{\Kc_m})^\sigma}.\]
\end{prop}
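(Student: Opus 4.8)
The plan is to show that conjugation by $z^\theta$ carries the $\tau$-action on $G_{\Kc_m}$ to the $\sigma$-action, from which the claimed isomorphism of the restriction-of-scalars group schemes follows formally. First I would unwind what $\Res_{\Kc_r/\Kc}(G_{\Kc_r})^\tau$ means as a subgroup scheme of $\Res_{\Kc_m/\Kc}(G_{\Kc_m})$: since $r \mid m$, one has $\Kc_r \subset \Kc_m$ and $\Res_{\Kc_r/\Kc}(G_{\Kc_r}) = \Res_{\Kc_m/\Kc}(G_{\Kc_m})^{\gamma^{r}}$ (fixed points of the subgroup $\langle \gamma^r\rangle$ of the Galois group acting only on scalars), and then taking $\tau$-invariants means taking invariants under the full $\Gamma = \langle\gamma\rangle$ acting through the combined scalar-and-$\tau$ action. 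So the essential point is an equality of two actions of $\Gamma$ on $G_{\Kc_m}$: the map
\[
a \longmapsto \tau(\gamma(a)) \qquad \text{versus} \qquad a \longmapsto \sigma(\gamma(a)) = \tau\bigl(\Ad_t(\gamma(a))\bigr),
\]
where in the first expression $\gamma$ acts only on the coefficient field $\Kc_m$ (fixing $G$) and $\tau$ acts only on $G$; I must check these are intertwined by $\Ad_{z^\theta}$.

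The key computation is therefore: for $a \in G(\Kc_m \otimes R)$ (functorially in $R$), verify
\[
\Ad_{z^\theta}\bigl(\tau(\gamma(a))\bigr) \;=\; \sigma\bigl(\gamma(\Ad_{z^\theta}(a))\bigr).
\]
Here $\gamma$ acts on $z^{1/m}$ by $z^{1/m} \mapsto \zeta_m^{-1} z^{1/m}$, so $\gamma(z^\theta) = \gamma\bigl((z^{\lambda_1/m},\dots,z^{\lambda_\ell/m})\bigr) = (\zeta_m^{-\lambda_1} z^{\lambda_1/m},\dots) = \zeta_m^{-\lambda}\cdot z^\theta = t^{-1} z^\theta$ under the identifications made in the excerpt (using $t = \zeta_m^\lambda$). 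Since $\tau$ preserves $T$ and $\tau(\lambda) = \lambda$ (as $\lambda \in \coX(T)^\tau$), $\tau$ commutes with $\Ad_{z^\theta}$, i.e. $\tau(\Ad_{z^\theta}(x)) = \Ad_{z^{\tau\theta}}(\tau(x)) = \Ad_{z^\theta}(\tau(x))$; and $\tau$ commutes with the scalar action $\gamma$ since $\tau$ does nothing to $\Kc_m$. Expanding the right-hand side:
\[
\sigma(\gamma(\Ad_{z^\theta}(a))) = \tau\Ad_t\bigl(\Ad_{\gamma(z^\theta)}(\gamma(a))\bigr) = \tau\Ad_t\bigl(\Ad_{t^{-1} z^\theta}(\gamma(a))\bigr) = \tau\Ad_{t \cdot t^{-1} z^\theta}(\gamma(a)) = \tau\Ad_{z^\theta}(\gamma(a)),
\]
while the left-hand side is $\Ad_{z^\theta}\tau(\gamma(a)) = \tau\Ad_{z^\theta}(\gamma(a))$ by the commutation just noted. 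So the two sides agree. I would also remark that $\Ad_{z^\theta}$, being conjugation by an element of $T(\Kc_m)$, is a well-defined automorphism of the group scheme $G_{\Kc_m}$ over $\Kc_m$ (or rather of $\Res_{\Kc_m/\Kc}(G_{\Kc_m})$ over $\Kc$), with inverse $\Ad_{z^{-\theta}}$.

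Finally, from the intertwining of the two $\Gamma$-actions, $\Ad_{z^\theta}$ restricts to an isomorphism on invariants
\[
\Res_{\Kc_m/\Kc}(G_{\Kc_m})^{\tau} \;\xrightarrow{\ \cong\ }\; \Res_{\Kc_m/\Kc}(G_{\Kc_m})^{\sigma},
\]
where the source is exactly $\Res_{\Kc_r/\Kc}(G_{\Kc_r})^\tau$ by the discussion in the first paragraph (invariance under $\gamma^r$ acting on scalars is automatic once one is $\tau$-and-scalar invariant under all of $\Gamma$, because the $\tau$-component of $\gamma^r$ is trivial, $r$ being the order of $\tau$). This gives the desired isomorphism. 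The only delicate point I anticipate is bookkeeping the distinction between "$\gamma$ acting on the coefficients only" and "$\gamma$ acting through $\sigma$ or $\tau$ as well" consistently across the restriction-of-scalars formalism — i.e. making sure the fixed-point subschemes are described with respect to the correct actions — but once that is set up carefully the computation above is short and the result is immediate. A clean way to phrase the bookkeeping is to work throughout inside $\Res_{\Kc_m/\Kc}(G_{\Kc_m})$ with its $\Gamma$-action "$\gamma$ on scalars, twisted by an automorphism of $G$", and observe that $\Ad_{z^\theta}$ conjugates the $\tau$-twisted action to the $\sigma$-twisted action.
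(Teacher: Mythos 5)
Your proposal is correct and takes essentially the same route as the paper: the heart of both arguments is the verification that $\Ad_{z^\theta}$ intertwines the $\tau$-twisted and the $\sigma$-twisted $\Gamma$-actions on $G_{\Kc_m}$, using $\gamma(z^\theta)=t^{-1}z^\theta$ and the $\tau$-invariance of $\lambda$. The only cosmetic difference is that the paper concludes by Galois descent after base change to $\Kc_m$, whereas you take $\Gamma$-invariants directly inside $\Res_{\Kc_m/\Kc}(G_{\Kc_m})$, spelling out the computation the paper leaves as ``one may check easily.''
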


\begin{proof} 
Let $\Gamma$ be the Galois group ${\rm Gal}(\Kc_m/\Kc)$ with a generator $\gamma$.  After the base change, we have the following isomorphism
\[ \Res_{\Kc_r/\Kc}(G_{\Kc_r})\times_{\Kc} \Kc_m\cong G_{\Kc_m}, \]
where the Galois group $ \Gamma$ acting on $\Res_{\Kc_r/\Kc}(G_{\Kc_r})\times_{\Kc} \Kc_m$ corresponds to the action of $\gamma$ on $G_{\Kc_m}$ given by combining $\tau$ on $G$ and the field automorphism $z\mapsto \zeta_m^{-1}z$. 
Similarly, we have 
\[ \Res_{\Kc_m/\Kc}(G_{\Kc_m})\times_{\Kc} \Kc_m\cong G_{\Kc_m}, \]
where the Galois group action corresponds to $\gamma$ acts on $ G_{\Kc_m}$ given by combining $\sigma$ on $G$, and the field automorphism as above. 
One may check easily that the automorphism $\Ad_{z^\theta}: G_{\Kc_m}\cong G_{\Kc_m}$ intertwines with the two actions given above. By Galois descent, we obtain the isomorphism 
$ \xymatrix{\Res_{\Kc_r/\Kc}(G_{\Kc_r})^\tau \, \ar[r]^-{\cong} & \Res_{\Kc_m/\Kc}(G_{\Kc_m})^\sigma}$. 
\end{proof}

\begin{theorem} 
\label{thm:local}
Let $\sigma$ be an automorphism of $G$ arising from a class $\kappa\in \Ho^1_\tau(\Gamma, G_\ad)$ as above. 
Under the isomorphism of  \cref{prop:kacgroup}, the group scheme $\Res_{\Oc_m/\Oc}(G_{\Oc_m})^\sigma$ is isomorphic to the parahoric group scheme $\Gc_{\theta}$, where $\theta=\frac{\lambda}{m}$.
\end{theorem}

\begin{proof} 
Let $B$ be the Borel subgroup preserved by $\tau$ which contains $T$. Let $B^-$ be the opposite Borel subgroup of $B$. Let $U^\pm$ be the unipotent radical of $B^\pm$. It is well-known that the multiplication $U^-\times U\times T\to G $ is an open embedding of varieties over $k$. It induces an open embedding of their jet schemes as pro-varieties over $k$:
\[ U^-(\mathcal{O}_m)\times U(\mathcal{O}_m)\times T(\mathcal{O}_m) \to G(\mathcal{O}_m).    \]
By taking $\sigma$-invariants, we get an open embedding:
\[    U^-(\mathcal{O}_m)^\sigma \times U(\mathcal{O}_m)^\sigma\times T(\mathcal{O}_m)^\sigma \to G(\mathcal{O}_m)^\sigma. \]
Since $G$ is simply-connected, $G^\sigma$ is a connected simple algebraic group, cf. \cite{steinberg:1968:endomorphisms}. Thus, $G(\mathcal{O}_m)^\sigma$ is also a connected pro-algebraic group over $k$. As a consequence, $U^-(\mathcal{O}_m)^\sigma, U(\mathcal{O}_m)^\sigma$ and $T(\mathcal{O}_m)^\sigma  $ generate $G(\mathcal{O}_m)^\sigma $. 

Via the isomorphism $\Ad_{z^\theta}: G_{\tau,\mathcal{K}} \cong  G_{\sigma,\mathcal{K} }$ given in \cref{prop:kacgroup}, we can identify the relative root systems for $G_{\tau, \mathcal{K}}$ and $G_{\sigma, \mathcal{K}}$. For any relative root $a\in \mathcal{R}$, $\mathcal{U}_a$ denotes the root subgroup in $G_{\tau, \mathcal{K}}$ and we denote $\mathcal{U}'_a$ the associated root subgroup in $G_{\sigma, \mathcal{K}}$. Moreover, there is a natural valuation structure on $\mathcal{U}'_a$. Using the description of the root subgroups given in \cref{sec:parahoric} and the analogue for $\Uc_{a}'$, by direct calculation we have for any $a\in \mathcal{R}$ the identification 
\[ \Ad_{z^\theta} ( \mathcal{U}_{a, -\theta(a)} )= \mathcal{U}'_{a, 0}.    \]
Moreover,  $\Ad_{z^\theta}(T(\mathcal{O}_r)^\tau )= T(\mathcal{O}_m)^\sigma$. Let $\mathcal{P}^\pm_\theta$ be the subgroup of $\mathcal{P}$ generated by $\{\mathcal{U}_{a, -\theta(a)}  \,: \,  a\in \mathcal{R}^\pm \}$. Then, $\Ad_{z^\theta}( \mathcal{P}^\pm_\theta)=U^\pm(\mathcal{O}_m)^\sigma$. 

It follows that $\Ad_{z^\theta}(  \mathcal{P}_\theta)= G(\mathcal{O}_m)^\sigma$. In view of \cite[I.7.6]{bruhat.tits:1984:II}, the isomorphism $\Ad_{z^\theta} \colon G_{\tau,\mathcal{K}}\cong G_{\sigma, \mathcal{K}}$ uniquely extends to an isomorphism $\mathcal{G}_\theta\cong {\rm Res}_{\mathcal{O}_m/\mathcal{O} }(G_{\mathcal{O}_m })^\sigma$. 
\end{proof}

\begin{rmk} \label{rmk:adjoint} In \cref{thm:local}, we have assumed that $G$ is simply connected, so that we can ensure that $G^\sigma$ and $G(\Oc_m)^\sigma$ are connected. When $G$ is not simply connected, this property doesn't automatically hold, but it will depend on the type of the group $G$. When $G$ is adjoint and $\tau$ has order $2$ (resp. $3$), this will be the case when $G$ is of type $A_{2\ell}$ or $E_6$ (resp. $D_4$), cf.\,\cite[9.8]{steinberg:1968:endomorphisms}. So in these cases, \cref{thm:local} also holds when $G$ is of adjoint type.  \end{rmk}

In the above, we consider the automorphism $\sigma$ arising from a class $\kappa\in \Ho^1_\tau(\Gamma, G_\ad)$. In the following proposition and its corollary, we obtain a decomposition of every finite order automorphism of $G$. We allow here $G$ to be a simple group which is not necessarily simply connected.

\begin{prop} \label{cor:sigma=tauAdtbar} Let $G$ be a simple algebraic group over $k$. Let $\sigma$ be an automorphism of $G$ of order $m$ and assume that $\car(k)$ does not divide $m$. Then we have a decomposition \[\sigma = \Ad_{\bar{t}} \circ \tau,\] where $\tau$ is a diagram automorphism of $G$ preserving a Borel $B$ and a maximal torus $T$ contained in $B$, and $\bar{t}$ is a $\tau$-invariant element of $T_\ad$ which can be written as $\zeta^\lambda_m$ for $\lambda \in \coX(T_\ad)^\tau$.
\end{prop}

\begin{proof} We first assume $G$ is adjoint. Then, we write $\sigma=\Ad_g \circ \tau'$, for some diagram automorphism $\tau'$ preserving a Borel subgroup $B'$ and a maximal torus $T'$ contained in $B'$.  By the condition $\sigma^m=1$ and the fact that $G$ is adjoint, we deduce that $g$ satisfies the condition $g\tau'(g)\cdots (\tau')^{m-1}(g)=1$. 
Thus, $[g]\in \Ho^1_{\tau'}(\Gamma, G)$ where $\Gamma$ is the cyclic group of order $m$. By \cref{thm:H1tauG}, there exists $t'\in T'$ such that $[g]=[t']$, with $t'=(\zeta_m)^\lambda$ for some $\lambda\in \coX(T')^{\tau'}$. One may observe that $[g]=[t']$ is equivalent to that, there exists an element $k\in G$, such that $\sigma=\Ad_k \circ \Ad_{t'}\circ \tau'\circ  \Ad_{k^{-1}}$. Thus, $\sigma=\Ad_t\circ \tau $, where $\tau=\Ad_k\circ \tau'\circ \Ad_{k^{-1}}$ is a diagram automorphism preserving $B=\Ad_{k}(B')$ and $T=\Ad_t(T')$, and $t=\Ad_k(t')=\zeta_m^{\lambda}\in T$ with $\lambda$ regarded as an element in $\coX(T)^\tau$. 

Now, let $G$ be a simple algebraic group which is not adjoint. Passing to $\bar{G}=G_{\ad}=G/Z(G)$, we can write $\sigma=\Ad_{\bar{t}}\circ \tau $ where $\tau$ preserves a Borel subgroup $\bar{B}$ of $G_{\ad}$ and a maximal torus $\bar{T}$ contained in $\bar{B}$, and $\bar{t}\in \bar{T}^\tau$. Denoting by $T$ and $B$ the preimages of $\bar{T}$ and $\bar{B}$ in $G$, the result is proved.  \end{proof}

\begin{cor}\label{prop:decom}
Let $G$ be a simple algebraic group over $k$. Let $\sigma$ be an automorphism on $G$ of finite order. Assume that $\car(k)$ does not divide the order of $\sigma$ and the order of the center of $G^{\eta,0}$, where $\eta$ is a diagram automorphism such that $\bar{\sigma}=\bar{\eta} \in \Out(G)$. Then, we have a decomposition 
\[\sigma=\Ad_{t} \circ \tau,\]
where $\tau$ is a diagram automorphism preserving a Borel subgroup $B$ and a maximal torus $T$ contained in $B$, and $t=\zeta_{M}^{\lambda}$ for some positive integer $M$ and $\lambda\in X_*(T)^\tau$. In fact, one can take $M=|\sigma|\cdot |Z(G^{\tau,0})|$, where $Z(G^{\tau,0})$ is the center of $G^{\tau,0}$. 
\end{cor}

\begin{proof} It follows from \cref{cor:sigma=tauAdtbar} that we can write $\sigma=\Ad_{\bar{t}}\circ \tau $ where $\tau$ preserves a Borel subgroup $\bar{B}$ of $G_{\ad}$ and a maximal torus $\bar{T}$ contained in $\bar{B}$, and $\bar{t}\in \bar{T}^\tau$. Denote by $T$ and $B$ the preimages of $\bar{T}$ and $\bar{B}$ in $G$. Let $\bar{T}_m$ be the subgroup of $\bar{T}$ consisting of those elements $\bar{t}$ such that $\bar{t}^m=1$.
The surjective map $T \mapsto \bar{T}$ induces the surjective morphism $\pi:T^{\tau,0} \to \bar{T}^\tau$, 
which restricts to the surjection $T^{\tau,0}_{M} \to \bar{T}^\tau_m$ where $M= m\cdot |Z(G^{\tau,0 })|$, as $\ker(\pi)=Z(G^{\tau,0})$ (see, e.g.,\cite[Corollary 7.6.4 (iii)]{Springer:2009}).  In particular this means that for such an $M$, there exists $t\in T_{M}^{\tau,0}$ such that $\bar{t}$ is the image of $t$ in $\bar{T}$. Moreover, by assumption $\car(k)$ does not divide $M$, hence there exists an $M$-th primitive root of unity $\zeta_{M}$. Observe that the assignment  $\mu \mapsto \zeta_{M}^\mu$ induces a surjective map $\coX(T)^\tau = \coX(T^{\tau,0}) \to T^{\tau,0}_{M}$.  It follows that we can write $t=\zeta_{M}^\lambda$ for some $\lambda\in \coX(T)^\tau$.
Thus, $\sigma=\Ad_t\circ \tau$ on $G$ with $t=\zeta_{M}^\lambda$ for some $\lambda\in \coX(T)^\tau$.
\end{proof}

We can further obtain the following generalization of \cref{thm:local}.

\begin{cor}\label{cor:parahoric}
Let $G$ be a simply-connected simple algebraic group with a finite order automorphism $\sigma$ such that $\sigma^m=1$, and $\car(k)$ does not divide $m$.  Then, ${\rm Res}_{\mathcal{O}_m/\mathcal{O}}(G_{\mathcal{O}_m})^\sigma$ is a parahoric group scheme. 
\end{cor}
\begin{proof}
By \cref{cor:sigma=tauAdtbar}, we are able to write $\sigma= \Ad_{\bar{t}} \circ \tau$ for some diagram automorphism $\tau$ preserving a maximal torus $T$, and an element $\bar{t}\in T_\ad$ such that $\bar{t}^m=1$. It then follows from 
\cref{thm:H1tauG} that $[\bar{t}] \in \Ho_\tau(\Gamma,G_\ad)$. We are then in the assumptions of  \cref{thm:local}, so that we can conclude that ${\rm Res}_{\mathcal{O}_m/\mathcal{O}}(G_{\mathcal{O}_m})^\sigma$  is a parahoric group scheme as desired.
\end{proof}

\subsection{Matching with Bruhat--Tits building of \texorpdfstring{$G(\mathcal{K}_r)^\tau$}{(G(K)r)-tau}}
We assume $G$ is simply-connected. Following \cite{Tits:1979}, we describe the chamber of the Bruhat--Tits building of the twisted loop group $G(\mathcal{K}_r)^\tau$ for the maximal split $\mathcal{K}$-torus $T^\tau\times_k \mathcal{K}$, and the corresponding affine Weyl group action. The chamber corresponding to $T^\tau\times_k \mathcal{K}$ is  given by $X_*(T)^\tau_{\mathbb{R}}$. The group $W^\tau$ can be regarded as the Weyl group of the relative root systems corresponding to $T^\tau\times_k \mathcal{K}$. The translation part comes from $T(\mathcal{K}_r)^\tau/ T(\mathcal{O}_r)^\tau$. Note that there is a natural identification  $X_*(T)_\tau\cong T(\mathcal{K}_r)^\tau/ T(\mathcal{O}_r)^\tau$ given by $\bar{\lambda}\mapsto \prod_{i=0}^{r-1} \tau^i(t^{\frac{\lambda}{r} })$ for any $\bar{\lambda}\in X_*(T)_\tau$,  where $\lambda$ is a representative of $\bar{\lambda}$ in $X_*(T)$ \cite[Section 2.3]{Besson-Hong:2020}.  Then the translation lattice on the chamber $X_*(T)^\tau_\mathbb{R}$ is exactly given by $\Av_r(X_*(T))$.  

Given an element $\theta\in X_*(T)^\tau_\mathbb{Q}$, let $m$ be the minimal positive integer such that $\theta\in \frac{1}{m} X_*(T_\ad)^\tau$. We write $\theta=\frac{\lambda}{m}$ for some $\lambda\in X_*(T_\ad)^\tau$ and set $\sigma= \Ad_{\zeta_m^\lambda} \circ \tau$ for some $m$-th primitive root $\zeta_m$ of unity. The following corollary  follows from \cref{thm:local}. 
\begin{theorem}\label{thm:BT}
If $\car(k)$ does not divide $m$, then there exists an isomorphism of group schemes 
\[ \Res_{\mathcal{O}_r/\mathcal{O} }( G_{\mathcal{O}_r } )^\sigma \cong \mathcal{G}_\theta,  \]
where $\mathcal{G}_\theta$ is the parahoric group scheme corresponding to $\theta$. 
\end{theorem}

From the Bruhat--Tits theory, parahoric group schemes are determined by facets, i.e. any interior points of a given facet give rise to the same parahoric group scheme. From this corollary, we can try to find an interior point in a given facet such that $m$ is minimal. Thus, the restriction of the characteristic $p$ can be very small. 

\begin{rmk}
Let $\mathcal{G}$ be a special parahoric group scheme over $\mathcal{O}$ of type $X_N^{(r)}$ with $r>1$. Here, ``special'' refers to that the parahoric group schemes correspond to special vertices of Bruhat--Tits building of $G(\mathcal{K}_r)^\tau$. When $X_N^{(r)}\not= A_{2\ell}^{(2)}$, the parahoric group scheme $\mathcal{G}$ is isomorphic to $ {\rm Res}_{\mathcal{O}_{r}/\mathcal{O}}( G_{\mathcal{O}_{r} }  )^\sigma $ where $\sigma$ is a diagram automorphism of order $r$. When $X_N^{(r)}=A_{2\ell}^{(2)}$, there are two special parahoric group schemes which are not isomorphic. One of them can be realized using a diagram automorphism of order $2$,  while another one can be realized by a standard automorphism of order $4$, see \cite{Besson-Hong:2020,Hong-Yu:2022}.
\end{rmk}

\section{\texorpdfstring{$(\Gamma,G)$}{(Gamma,G)}-bundles} \label{sec:localtypes}

\subsection{\texorpdfstring{$(\Gamma,G)$}{(Gamma-G)}-bundles and their local types} We recall in this section the notion of $(\Gamma,G)$-bundles, following \cite{balaji.seshadri:2015:moduli, damiolini:2021:equivariant}. Let $\Gamma$ be a finite group acting on a smooth and projective curve $C$. Assume further that $G$ is an affine group scheme over $k$ and fix an action of $\Gamma$ on $G$ via $\rho \colon \Gamma \to \text{Aut}(G)$. Throughout we will assume that $G$ is smooth and that the characteristic of $k$ does not divide $|\Gamma|$.

\begin{defi} A $(\Gamma,G)$-bundle is the data of a right $G$-bundle $\Ec$ over $C$ together with a left action of $\Gamma$ on its total space, lifting the action of $\Gamma$ on $C$, and compatible with the action of $\Gamma$ on $G$.\end{defi}

\begin{rmk} One can similarly define a $(\Gamma,G)$-bundle as being a left $G$-bundle with a right action of $\Gamma$ on its total space. It will be clear from the context which version we are going to use.
\end{rmk}

More explicitly, on every $\Gamma$-equivariant open set $U \subseteq C$ on which $\Ec$ is trivial as a $G$-bundle, we can express the compatibility condition as follows. Let $1 \in \Ec(U)$ define a trivialization of $\Ec$, then for all $\gamma \in \Gamma$ and $g \in G(U)$ we require that
\begin{equation}\label{eq:compatibility} \gamma(1 \cdot g) = \gamma(1) \cdot \rho_\gamma(g).
\end{equation} This in particular tells us that the action of $\Gamma$ on $\Ec$ is uniquely determined by the action of $\Gamma$ on the preferred section $1$ and the fixed action of $\Gamma$ on $G$ by $\rho$. 

We now consider the case in which $U$ is replaced by the disk $\Db_x=\Spec(k[\![z_x]\!])$ around a point $x \in C$. The stabilizer $\Gamma_x$ of $x \in C$ is a cyclic group, denote its generator by $\gamma_x$, and denote by $m_x$ its order. Then, on the formal disk $\Db_x$ about $x$ the element $\gamma_x$ acts multiplying the local coordinate $z_x$ by a primitive $m_x$-th root of unity. Any $(\Gamma,G)$-bundle $\Ec$ on $C$ trivializes, as a $G$-bundle, over $\Db_x$. We fix a trivialization induced by the element $1 \in \Ec(\Db_x)$, so that we can, and will, identify $\gamma_x(1)$ with a unique element of $G(\Db_x)$ and $1$ with the unit of $G(\Db_x)$. We then deduce from \eqref{eq:compatibility} and $\gamma_x^{m_x}=e$, that $\gamma_x(1)$
needs to satisfy the cocycle condition 
\[\gamma_x(1) \rho_\gamma(\gamma_x(1)) \cdots \rho_\gamma^{m_x-1}(\gamma_x(1)) =1.
\] 
Changing the trivialization to $1' \in \Ec(\Db_x)$, we obtain that 
\[\gamma_x(1') = a \cdot  \gamma_x(1) \cdot \rho_\gamma^{-1}(a). 
\] for $a \in G(\Db_x)$ such that $1=1'\cdot a$.  It follows that isomorphism classes of $(\Gamma_x,G)$-bundles on $\Db_x$ are described by the non abelian cohomology $\Ho^1(\Gamma_x,G(\Db_x))$, where we view $G(\Db_x)$ as a $\Gamma_x$-group through the combined actions of $\Gamma_x$ on $\Db_x$ and on $G$ (via $\rho$). 

Similarly to \cite[Lemma 2.5]{teleman.woodward:2003:parabolic} we have the following statement.

\begin{prop} \label{prop:twgen} The closed embedding $x \to \Db_x$ (or equivalently the evaluation map $k[\![z_x]\!] \to k$ given by $z_x \mapsto 0$), induces an isomorphism $\Ho^1(\Gamma_x,G(\Db_x)) \cong \Ho^1(\Gamma_x,G(k))$. In particular, isomorphism classes of $(\Gamma_x,G)$-bundles on $\Db_x$ are in bijection with $\Ho^1(\Gamma_x,G(k))$. 
\end{prop}

\begin{proof} It is enough to show that the evaluation map induces an injection of $\Ho^1(\Gamma_z,G(\Db_x))$ into $\Ho^1(\Gamma_x,G(k))$ since this map is already surjective. This can be seen as a consequence of either  \cite[Proposition 2.9]{damiolini:2021:equivariant} or \cite[Claim 3.2]{gille:2018:semisimple}. \end{proof} 

Using the same terminology introduced by \cite{balaji.seshadri:2015:moduli} and used also in \cite{damiolini:2021:equivariant}, we define the concept of local type in cohomological terms. 

\begin{defi}\label{def:localtype} For every $(\Gamma,G)$-bundle $\Ec$ on $C$ and $x \in C$ with stabilizer $\Gamma_x$, we call \textit{local type} of $\Ec$ at $x$ the element $\kappa \in \Ho^1(\Gamma_x,G):=\Ho^1(\Gamma_x,G(k))$ which corresponds to the isomorphism class of $\Ec$ on $\Db_x$ via \cref{prop:twgen}.\end{defi}

\begin{rmk} \label{rmk:localtypesgamma} Observe that for all $\gamma \in \Gamma$ and $x \in C$ one has that $\Gamma_{\gamma(x)} = \gamma \Gamma_x \gamma^{-1}$. Moreover, if an element $\kappa \in\Ho^1(\Gamma_x,G)$ describes the local type of the $(\Gamma,G)$-bundle $\Ec$ around $x$, this uniquely determines the element $\gamma(\kappa) \in\Ho^1(\Gamma_{\gamma(x)},G)$ corresponding to the local type of $\Ec$ around $\gamma(x)$. Thus, specifying the local type of $\Ec$ at $x$ automatically specifies the local type of $\Ec$ at all the points in the same orbit of $x$. \end{rmk}

\subsubsection{Patching} We can use the main theorem of  \cite{beauville.laszlo:descent}, to show that $(\Gamma,G)$-bundles on $C$ can be obtained by patching together local $(\Gamma,G)$-bundles. The covering that we will use consists of the complement of the ramification locus $R$ of the action of $\Gamma$ on $C$, and of formal neighborhoods $\Db_x$ about every point $x \in R$. We begin with the following result:

\begin{lemma} \label{lem:trivialGGamma} Let $\Ec$ be a $(\Gamma,G)$-bundle on $C$. Let $U = C \setminus R$ be the complement of the ramification locus of the action of $\Gamma$ on $C$. Then the restriction of $\Ec$ to $U$ is isomorphic to the trivial $(\Gamma,G)$-bundle over $U$. \end{lemma}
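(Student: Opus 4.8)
The plan is to reduce the assertion to the triviality of torsors under a reductive group scheme on an affine curve, for which I would appeal to Heinloth's trivialization theorem. First I would use that $\Gamma$ acts freely on $U$: the quotient map $p\colon U\to \bar U:=U/\Gamma$ is then a finite étale $\Gamma$-torsor, and $\bar U$ is a smooth affine curve over $k$ --- here the non-emptiness of $R$ enters, ensuring that the image $\bar R$ of $R$ in $\bar C:=C/\Gamma$ is non-empty, so that $\bar U=\bar C\setminus\bar R$ is affine. Let $\Gc_0:=(p_*(G\times U))^\Gamma$ be the twist of $G$ along the $\Gamma$-torsor $p$ (via $\rho\colon\Gamma\to\Aut(G)$), a smooth affine group scheme over $\bar U$ which is a form of $G$. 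By descent along $p$, worked out in \cite{damiolini:2021:equivariant}, the category of $(\Gamma,G)$-bundles on $U$ is equivalent to that of $\Gc_0$-torsors on $\bar U$, and under this equivalence the trivial $(\Gamma,G)$-bundle corresponds to the trivial $\Gc_0$-torsor. It therefore suffices to show that every $\Gc_0$-torsor on $\bar U$ is trivial.

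I would then extend $\Gc_0$ over the projective curve $\bar C$. Taking $\Gc:=(\pi_*(G\times C))^\Gamma$, with $\pi\colon C\to\bar C$ the quotient map, yields a parahoric Bruhat--Tits group scheme over $\bar C$ (as recalled in the introduction) whose branching locus is contained in $\bar R$ and which restricts to $\Gc_0$ over $\bar U$; moreover the $\Gc_0$-torsor attached to $\Ec|_U$ is the restriction to $\bar U$ of the $\Gc$-torsor attached to $\Ec$ on $\bar C$. Now I would invoke \cite{heinloth:2010:uniformization}, which extends the theorem of Drinfeld--Simpson from constant semisimple groups to parahoric Bruhat--Tits group schemes with simply connected generic fibre: every $\Gc$-torsor on $\bar C$ is trivial on the complement of any prescribed closed point $x_0\in\bar C$. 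Choosing $x_0\in\bar R$ and using $\bar U\subseteq\bar C\setminus\{x_0\}$, we conclude that this $\Gc$-torsor is trivial on $\bar U$; pulling back along $p$ and using the equivalence of the previous paragraph, $\Ec|_U$ is trivial as a $(\Gamma,G)$-bundle.

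The main obstacle --- and essentially the only substantive input --- is Heinloth's trivialization theorem; it rests on $G$, hence the generic fibre of $\Gc$, being simply connected, which over the function field $k(\bar C)$ (of cohomological dimension one) amounts to the vanishing of $\Ho^1$ for semisimple simply connected groups. The remaining steps are bookkeeping: the descent equivalence of the first paragraph together with the compatibility of trivializations under it, which I would take from \cite{damiolini:2021:equivariant}, and the identifications $\Gc|_{\bar U}=\Gc_0$ and (at the level of torsors) that $\Ec\mapsto\Ec|_U$ corresponds to restriction from $\bar C$ to $\bar U$. The hypothesis $R\neq\emptyset$ is genuinely used: it is what makes $\bar U$ affine, equivalently what provides a point of $\bar C$ at which to trivialize.
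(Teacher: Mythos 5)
Your overall strategy is the paper's: use the descent equivalence of \cite{damiolini:2021:equivariant} to replace $\Ec|_U$ by a torsor under the group scheme $\Gc_0=(p_*(G\times U))^\Gamma$ over the affine quotient curve $\bar U=U/\Gamma$, and then conclude by Heinloth's trivialization theorem for parahoric Bruhat--Tits group schemes. The paper simply observes that $\Gc_0$ is itself a parahoric Bruhat--Tits group scheme over the affine curve $\bar U$ and applies \cite[Theorem 1]{heinloth:2010:uniformization} directly to the $\Gc_0$-torsor $p_*(\Ec|_U)^\Gamma$ on $\bar U$, with no detour through the projective curve.

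Your detour does contain one step that is wrong as written: the assertion that ``the $\Gc_0$-torsor attached to $\Ec|_U$ is the restriction to $\bar U$ of the $\Gc$-torsor attached to $\Ec$ on $\bar C$,'' where $\Gc=(\pi_*(G\times C))^\Gamma$. The natural candidate $\pi_*(\Ec)^\Gamma$ is a $\Gc$-torsor only away from the branch points; at a branch point $\bar x$ it is instead a torsor under $\pi_*(\mathrm{Aut}_G(\Ec))^\Gamma$, whose fiberwise structure at $\bar x$ is the parahoric group determined by the local type of $\Ec$ at $x$, and it is a $\Gc$-torsor there only when that local type is trivial --- indeed measuring this failure is exactly the point of the local-type formalism of the paper (compare \cref{prop:twgen}, \cref{def:localtype} and the identification $\Bun_{\Gamma,G,\vec\kappa}\cong\Bun_{\Gc_{\vec\kappa}}$ with $\Gc_{\vec\kappa}=\mathrm{Aut}_{\Gamma,G}(\Ec)$, which differs from $\pi_*(G\times C)^\Gamma$ for nontrivial $\vec\kappa$). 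So you cannot feed $\pi_*(\Ec)^\Gamma$ into the ``trivial away from one point'' form of Heinloth's theorem. The repair is immediate, and two options are available: either invoke the theorem in the form the paper uses, namely that torsors under the parahoric Bruhat--Tits group scheme $\Gc_0$ over the affine curve $\bar U$ are trivial; or, if you insist on working on $\bar C$, first extend the $\Gc_0$-torsor from $\bar U$ to a genuine $\Gc$-torsor by gluing in trivial torsors over the formal disks at the points of $\bar R$, which is possible because the torsor is trivial over each punctured disk ($\Ho^1(\Kc_{\bar x},\Gc)=0$ by Borel--Springer, exactly as in the proof of \cref{prop:nonempty}), and only then trivialize away from a point of $\bar R$. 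With either repair your argument coincides with the paper's; your remarks on the roles of $R\neq\emptyset$ (making $\bar U$ affine) and of simple connectedness of the generic fiber in Heinloth's theorem are accurate and apply equally to the paper's proof.
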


\begin{proof} In view of \cite[Theorem 3.1]{damiolini:2021:equivariant} it is enough to show that $\pi_*(\Ec)^\Gamma$ is the trivial $\pi_*(G\times U)^\Gamma$-bundle on the quotient curve $U/\Gamma$. Since U does not contain any ramification point, we deduce from \cite[Proposition 2.9]{damiolini:2021:equivariant} that $\pi_*(\Ec)^\Gamma$ is a $\pi_*(G\times U)^\Gamma$-bundle. Moreover, the group $\pi_*(G\times U)^\Gamma$ is a parahoric Bruhat--Tits group over the affine curve $U/\Gamma$, hence \cite[Theorem 1]{heinloth:2010:uniformization} guarantees that $\pi_*(\Ec)^\Gamma \cong \pi_*(G\times U)^\Gamma$, concluding the argument.
\end{proof}

\begin{rmk} The idea underlying the proof of the above result can be found also in the proof of  \cite[Lemma 3.1]{hong.kumar:2022}. \end{rmk}

The following result tells us that we can reconstruct $(\Gamma,G)$-bundles from local types at the ramification points. 

\begin{prop} \label{prop:nonempty}
Let $C$ be a smooth and projective curve with a $\Gamma$ action and $\rho \colon \Gamma \to \text{\rm Aut}(G)$ be a group homomorphism. Let $R \subset C$ be the ramification locus for the action of $\Gamma$. Choose $S=\{x_1, \dots, x_s\} \subseteq R$ such that $R = \sqcup_{x_i \in S} \Gamma x_i$ and, for every $x \in S$ choose an element $\kappa_{x} \in \Ho^1(\Gamma_{x}, G)$. Then there exists a $(\Gamma,G)$-bundle over $C$ with local types $\gamma(\kappa_{x})$ at every point $\gamma(x) \in R$ with $x \in S$ and $\gamma \in \Gamma$. 
\end{prop}

\begin{proof} In view of \cref{rmk:localtypesgamma} it is enough to show that there exists a $(\Gamma,G)$-bundle over $C$ with local types $\kappa_{x}$ at every point $x \in S$. From  \cref{prop:twgen} we know that $\kappa_{x}$ extends uniquely to an element of $\Ho^1(\Gamma_{x}, G(\Db_x))$, and this defines an isomorphism class of $(\Gamma,G)$-bundles on the disjoint union $\sqcup_{\gamma \in \Gamma/\Gamma_x} D_{\gamma(x)}$ of the prescribed local type. On the open curve $U=C \setminus R$, we take the trivial $(\Gamma,G)$-bundle $U \times G$. For these data to give rise to a $(\Gamma,G)$-bundle on the whole curve $C$, it is enough to show that $\Ho^1(\Gamma_x,G(\Db_x^\times))$ is trivial. Rewriting this in terms of Galois cohomology, this equals $\Ho^1(\Kc_{m_x}/\Kc, H)$, where $H:=(\Res_{\Kc_{m_x} / \Kc}(G_{ \Kc_{m_x}}))^{\Gamma_x}$ is a smooth and connected group over $\Spec(\Kc)$. Recall that $\Ho^1(\Kc_{m_x}/\Kc, H)$ can be identified with the set of isomorphism classes of $H$-bundles over $\Spec(\Kc)$ which are trivial after base change to $\Spec (\Kc_{m_x})$. Thus, the triviality of $\Ho^1(\Kc_{m_x}/\Kc, H)$ follows from the fact that the absolute Galois cohomology $\Ho^1(\Kc, H)$ vanishes, and this follows from \cite[page 484]{borel.springer:1968:rationalityII}.
\end{proof}

\subsection{Components of \texorpdfstring{$\Bun_{\Gamma,G}$}{Bun(Gamma,G)}} 
Let $C$ be a smooth and projective curve on which the finite group $\Gamma$ acts and assume that $\Gamma$ acts on the group $G$. We focus here on the moduli stack $\Bun_{\Gamma, G}$ of $(\Gamma, G)$-bundles on $C$. 

As in the statement of \cref{prop:nonempty}, let $S=\{x_1,\cdots, x_s\}$ be a subset of the ramification locus $R$ such that $R=\sqcup_{i=1}^s \Gamma x_i$. Further denote by $\Gamma_i$ the stabilizer groups at $x_i$.  For every $x_i \in S$, choose an element $\kappa_i\in \Ho^1(\Gamma_i, G)$ and denote by $\vec{\kappa}$ the $s$-tuple $(\kappa_1, \dots, \kappa_s)$. Denote by $\Bun_{\Gamma, G, \vec{\kappa}}$ the moduli stack of $(\Gamma, G)$-bundles on $C$ with local types $\vec{\kappa}$ at $\vec{x}$.  It follows from  \cref{prop:nonempty} that $\Bun_{\Gamma, G, \vec{\kappa}}$ is always non-empty.  Moreover, by \cite{damiolini:2021:equivariant},     $\Bun_{\Gamma, G, \vec{\kappa}}$ is isomorphic to $\Bun_{\mathcal{G}_{\vec{\kappa}} }$,  where  
\[\mathcal{G}_{\vec{\kappa}}={\rm Aut}_{\Gamma,G}(\Ec),\]
for a $(\Gamma, G)$-bundle $\Ec$ in $\Bun_{\Gamma, G, \vec{\kappa}}$.

 \begin{prop}\label{thm_comp}
 If $G$ is simply-connected, then 
 the group scheme $\mathcal{G}_{\vec{\kappa}}$ is a parahoric Bruhat--Tits group scheme over $\bar{C}=C/\Gamma$. In particular, $\Bun_{\Gamma, G, \vec{\kappa}}$ is connected.
 \end{prop}

 \begin{proof}
Let $\gamma_i$ be a generator of $\Gamma_i$. Then the local type $\kappa_i=[t_i]\in \Ho^1(\Gamma_i,G)$ gives rise to a finite order automorphism $\sigma_i:=\Ad_{t_i}\circ \gamma_i$ on $G$. By 
\cref{prop:twgen}, $\Ho^1(\Gamma_i, G)\cong \Ho^1(\Gamma_i, G(\mathbb{D}_{x_i}))$. 
Fix any $\Ec\in \Bun_{\Gamma,G, \vec{\kappa}}$. Then, $[\Ec|_{\mathbb{D}_{x_i}}]\in H^1(\Gamma_x, G(\mathbb{D}_x ))\cong H^1(\Gamma_x, G)$ (by \cref{prop:twgen}). It actually means that $\Ec|_{\mathbb{D}_{x_i}}$ is isomorphic to the $(\Gamma_x, G)$-bundle $\Ec^\circ_i:=\mathbb{D}_{x_i}\times G$, which is a trivial $G$-bundle and the action of  $\gamma_i$ is given by $(z, g)\mapsto (\gamma_i(z), t_i\gamma_i(g))$. This implies that 
\[ {\rm Aut }_{\Gamma_i,G}(\Ec|_{\mathbb{D}_{x_i} })\cong {\rm Aut}_{\Gamma_i,G}(\Ec^\circ_i)\cong {\rm Res}_{\mathbb{D}_{x_i}/\mathbb{D}_{\bar{x}_i} }( G_{\mathbb{D}_{x_i}} )^{\sigma_i},  \]
where $\bar{x}_i$ is the image of $x_i$ under the projection map $C \to C/\Gamma$. 
By 
\cref{cor:parahoric}, ${\rm Aut }_{\Gamma_i,G}(\Ec|_{\mathbb{D}_{x_i} })$ is a parahoric group scheme. Thus, $\mathcal{G}_{\vec{\kappa}}={\rm Aut}_{\Gamma,G}(\Ec)$ is a parahoric Bruhat--Tits group scheme over $\bar{C}$. 

The second statement follows from  \cite[Theorem 2]{heinloth:2010:uniformization}.
\end{proof}

The following result can be seen as a consequence of the above proposition together with \cite[Theorem 4.2]{Pappas-Rapoport:2022} or directly from \cite[Corollary 7.2]{Pappas-Rapoport:2022}.

\begin{cor} \label{cor:comp} The stack $\Bun_{\Gamma,G}$ is decomposed  as the disjoint union of connected  algebraic stacks  $\sqcup_{\vec{\kappa}} \Bun_{\Gamma,G,\vec{\kappa}}$.
\end{cor}

We now use some of the consequences of \cref{thm:H1tauG} established in \cref{appendix} to compute the number of connected components of $\Bun_{\Gamma,G}$ in a variety of examples.

\begin{eg}
We now assume that $\Gamma=\langle \gamma \rangle$ is a cyclic group of order $r$ and its generator acts on $G$ by a diagram automorphism of order $r$ as well. Let $s$ be the number of ramified points for the action of $\Gamma$ on $C$. We combine  \cref{cor:comp} and \cref{eg:diagram} to show that
\begin{itemize}
    \item if $(G,r)  \in \{(A_{2\ell-1},2), (D_{\ell+1},2), (E_6,2), (D_4,3) \}$ then $\Bun_{\Gamma,G}$ has $2^s$ connected components;
    \item if $(G,r) = (A_{2\ell},2)$, then $\Bun_{\Gamma,G}$ has only one component.
\end{itemize}
Finally, when $\Gamma$ is trivial, then $\Bun_{\Gamma,G}=\Bun_{G}$, which is connected. This can also be seen directly from \cref{cor:comp} and \cref{lemma:alcove}.\end{eg}

\begin{eg} Assume that the group $\Gamma\cong \ZZ/2\ZZ$ acts trivially on $G=\SL_{n}$. Let $s$ be the number of ramified points for the action of $\Gamma$ on $C$. Then we deduce from \cref{cor:comp} and \cref{eg:tautrivial} that the stack $\Bun_{\ZZ/2\ZZ,\SL_{n}}$ has $\left\lceil \frac{n+1}{2}\right\rceil^s$ connected components.
\end{eg}

\begin{eg} We can construct explicitly a $(\Gamma, G)$-bundle which has different local types than the trivial $(\Gamma, G)$-bundle.  Let $\Ec$ be the following $(\Gamma, G)$-bundle:  $\Ec=C\times G$. Fix a $\gamma$-invariant element $t \neq 1$ in $G$ such that $t^2=1$.  $G$ acts on $\Ec$ on the right, and $\gamma(p, x)= (\gamma(p), t\gamma(x))$.  Then, the local type of $\Ec$ at a ramified point is given by $[t]\in \Ho^1(\Gamma, G)$. When $G$ is of type $A_{2\ell-1}$, we can find an element $t$ such that $[t]$ is not trivial. For example,  we can take $t=\alpha^\vee_\ell(-1)$, where $\alpha^\vee_\ell$ is the $\ell$-th simple coroot, regarded as a cocharacter of the maximal torus $T$. 
 \end{eg}

\section{Parahoric Bruhat--Tits group schemes arising from coverings} \label{sec:BTfromcoverings}

\subsection{Reductive group schemes over curves}

In this section we denote by $\Gc$ a smooth and simple group scheme over a smooth curve $C$. i.e. the group $\Gc$ is smooth over $C$ and every geometric fiber is a simple group. Let $G$ be the simple group over $k$ with the same root datum as one geometric fiber of $\Gc$ and let $G_{C}$ denote its pullback to $C$.  Let $\Ec:=\Iso(G_{C}, \Gc)$ denote the scheme over $C$ classifying the group scheme isomorphisms from $G_{C}$ to $\Gc$ over $C$. In view of the following result, $\Ec$ is a right ${\rm Aut}(G)$-bundle. 

\begin{lemma}
\label{lem:equivSGA}\cite[Corollaire 1.17, Exposé XXIV]{SGA3:volIII}
There exists an equivalence between the category $\mathrm{Form}(G)_C$ of forms of $G$ over $C$, and the category   ${\Bun}_{{\Aut}(G), C} $ of ${\rm Aut}(G)$-bundles over $C$, given by $G' \mapsto {\Iso}(G_C, G')$.  The inverse is given by $\Fc \mapsto G_\Fc:= \Fc \times^{\Aut(G)}G$.
\end{lemma}

We will also make use of the following result.

\begin{lemma}\label{lem:transitiveaction}
Let $H$ be a finite group acting on a variety $X$ over $k$. Suppose that $H$ permutes the set of connected components of $X$ transitively. Let $X^\circ$ be a component of $X$ and $H^\circ$ be the stabilizer of $X^\circ$ in $H$. Then, the morphism $\phi \colon  H\times^{H^\circ}  X^\circ\to X$ given by $(h, x)\mapsto h\cdot x$, is an $H$-equivariant isomorphism. 
\end{lemma}

\begin{proof}
Since $H$ permutes the component set of $X$ transitively,  the morphism $\phi$ must be surjective. Since for any $h\in H$, the map $h \colon X^\circ\to h X^\circ$ is an isomorphism, then $\phi$ is also a closed immersion.  Thus, $\phi$ is an isomorphism of varieties. 
\end{proof}

Let $\Eco$ denote a connected component of $\Ec$ and denote by $\Ct$ the quotient  $\Eco/G_{\ad}$. Since $G_{\ad}$ is connected, $\Ct$ coincides with a component of the quotient $\Ec / G_{\ad} $. Moreover $\Ct$ is a connected \'etale covering over $C$ and $\Ec/G_{\ad}$ is a right ${\rm Out}(G)$-bundle. We define $\Gamma$ to be the subgroup of ${\rm Out}(G)$ which stabilizes the component $\Ct$. It then follows from \cref{lem:transitiveaction} that $\pi \colon  \Ct \to C$ is an étale $\Gamma$-covering, that is the map $\pi \colon \Ct \to C$ is a $\Gamma$-principal bundle and $\Ct/\Gamma =C$. We summarize this construction and notation in the diagram below.

\[
  \begin{tikzcd}
  \Gc \arrow[d] & \Ec \arrow{dl}[near start]{{\small \Aut_G\text{-bundle}}} \arrow[r, hookleftarrow]& \Eco \arrow{d}{\small G_{\ad}\text{-bundle}} \\
  C \arrow[leftarrow]{rr}{\pi} &&\Ct=\Eco/G_\ad 
\end{tikzcd}
\]

We recall the following fact from étale descent along principal bundles. 
\begin{lemma} \cite[Theorem 4.46]{vistoli:descent}
\label{lem:descentG}
Let $\pi\colon \Ct \to C$ be an \'etale $\Gamma$-covering of algebraic curves for some finite group $\Gamma$. 
There is an equivalence between the category of smooth group schemes over $C$ and smooth $\Gamma$-group schemes over the $\Gamma$-curve $\Ct$, where the pullback functor $\pi^*$ and the $\Gamma$-invariant direct image $\pi_*^\Gamma$ are inverse to each other. \end{lemma}

To be more explicit, we observe that if $\Gc$ is a $\Gamma$-group scheme over $\Ct$, then by adjunction there is a map $\pi^* \pi_*(\Gc) \to (\Gc)$ which induces  
\begin{equation} \label{eq:unit} \epsilon \colon \pi^* \pi_*^\Gamma(\Gc) \to \pi^* \pi_*(\Gc) \to \Gc.\end{equation} Since $\pi$ is étale, then $\epsilon$ is an isomorphism. Similarly, for every group $\Hc$ over $C$, the map $\pi$ induces 
\begin{equation} \label{eq:counit} \eta \colon \Hc \to \pi_*(\pi^*\Hc)^\Gamma\end{equation} which is an isomorphism as well.

 We now fix a splitting
  \begin{equation}
  \label{eq:splitting}
   \iota: {\rm Out}(G)\to  {\rm Aut}(G)\end{equation}
   preserving a Borel subgroup $B$ and a maximal torus $T$ contained in $B$.  This gives rise to a group homomorphism $\phi_\iota \colon \Gamma \to {\rm Aut}(G)$.  It then follows that $\Eco$ is a $(\Gamma,G_{\ad})$-bundle over $\Ct$, where the action of $\Gamma$ is on the right and the action of $G_\ad$ is on the left.
  
   We now denote by $G_{\Eco}$ the group scheme over $\Ct$ associated to the $G_\ad$-bundle $\Eco$, namely $G_{\Eco}:= \Eco\times^{G_{\ad}} G $, where $G_{\ad}$ acts on $G$ by conjugation. The group scheme $G_{\Eco}$, together with the induced $\Gamma$-action, is a group scheme over $\Ct$ which is equipped with an action of $\Gamma$ compatible with that on $\Ct$.

\begin{lemma}
\label{lem:reductive} Under the above assumptions, there exists a natural isomorphism of group schemes over $C$
\[   \Gc \cong \pi_*( G_{\Eco} )^\Gamma. \]
\end{lemma}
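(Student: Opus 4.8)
**Proof plan for Lemma (the isomorphism $\Gc \cong \pi_*(G_{\Eco})^\Gamma$).**

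The plan is to chase the construction of $\Eco$ and $\Gamma$ backwards, assembling the claimed isomorphism out of three ingredients already set up in the excerpt: the equivalence $\mathrm{Form}(G)_C \simeq \Bun_{\Aut(G),C}$ of \cref{equiv_lem_1}, the étale-descent equivalence of \cref{lem:descentG}, and the relation between $G_\ad$-bundles (twisted by the splitting $\iota$) and the associated adjoint-action group schemes. First I would record the key compatibility: the $\Aut(G)$-bundle $\Ec = \Iso(G_C,\Gc)$ recovers $\Gc$ as $\Ec \times^{\Aut(G)} G$ (conjugation action), by \cref{equiv_lem_1}. The component $\Eco \subset \Ec$ is a $G_\ad$-bundle over $\Ct = \Eco/G_\ad$, and the structure group of $\Ec$ is reduced along $\Eco \hookrightarrow \Ec$ in the following sense: $\Ec|_{\text{over }\Ct}$, i.e. the pullback $\pi^*\Ec$, contains $\Eco \times \Gamma$ as the sub-$G_\ad\rtimes\Gamma$-bundle, where $\Gamma$ embeds into $\Aut(G)$ via $\phi_\iota$. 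Concretely, over $\Ct$ one has $\pi^*\Ec \cong \Eco \times^{G_\ad} (G_\ad \rtimes \Gamma) $ as $(G_\ad\rtimes\Gamma)$-bundles.

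Next I would compute $\pi^*\Gc$. On one hand, since $\pi$ is étale and $\Gc = \Ec\times^{\Aut(G)}G$, we get $\pi^*\Gc \cong (\pi^*\Ec)\times^{\Aut(G)} G \cong (\Eco\times^{G_\ad}(G_\ad\rtimes\Gamma))\times^{\Aut(G)}G$. Using $\phi_\iota\colon \Gamma\to\Aut(G)$ and the fact that $G_\ad\rtimes\Gamma \to \Aut(G)$ (via conjugation and $\iota$) realizes $\Aut(G)$ when $\iota$ is a splitting, this simplifies: $\pi^*\Gc \cong \Eco\times^{G_\ad}\bigl((G_\ad\rtimes\Gamma)\times^{G_\ad\rtimes\Gamma,\text{conj}} G\bigr)$. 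But $(G_\ad\rtimes\Gamma)$ acts on $G$ and the associated bundle over the point is just $G$ with its $(G_\ad\rtimes\Gamma)$-action; carefully, $\pi^*\Gc \cong (\Eco\times^{G_\ad}G)\rtimes?$ — the cleanest formulation is: $\pi^*\Gc \cong G_{\Eco}$ as group schemes over $\Ct$, together with the residual $\Gamma$-action on $G_{\Eco}$ induced from the $\Gamma$-action on $\Eco$ (on the right) and on $G$ (via $\phi_\iota$, i.e.\ via $\iota$). This identification of $\pi^*\Gc$ with the $\Gamma$-equivariant group scheme $G_{\Eco}$ is the technical heart of the argument, and I would verify it by a local trivialization computation: over a small étale $U\to C$ splitting both $\Eco$ and the covering, both sides are $G_U$ with the same $\Gamma$-action built from $\iota$ and the transition functions of $\Eco$, and these agree by the very definition of $G_{\Eco}$ and the way $\Gamma$ was defined as the stabilizer of $\Ct$ in $\Out(G)$.

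Finally I would conclude by applying \cref{lem:descentG}: the pullback functor $\pi^*$ and the $\Gamma$-invariant pushforward $\pi_*^\Gamma$ are mutually inverse equivalences between smooth group schemes over $C$ and smooth $\Gamma$-equivariant group schemes over $\Ct$. Since $\pi^*\Gc \cong G_{\Eco}$ as $\Gamma$-equivariant group schemes over $\Ct$, applying $\pi_*^\Gamma$ gives $\Gc \cong \pi_*^\Gamma(\pi^*\Gc) \cong \pi_*(G_{\Eco})^\Gamma$, as claimed. The main obstacle I anticipate is making the identification $\pi^*\Gc \cong G_{\Eco}$ genuinely $\Gamma$-equivariant and canonical: one must track how the splitting $\iota$ enters (a different splitting changes $G_{\Eco}$ by an inner twist, but not its isomorphism class over $\Ct$ because such a twist is absorbed into the $G_\ad$-bundle structure), and one must check that the $\Gamma$-action on $G_{\Eco}$ coming from the right $\Gamma$-action on $\Eco$ matches the descent datum on $\pi^*\Gc$. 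Both are routine once the local picture is written out, but they are the steps that require care rather than invocation of a black box.
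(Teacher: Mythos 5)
Your proposal is correct and follows essentially the same route as the paper: both reduce, via the descent equivalence of \cref{lem:descentG}, to producing a $\Gamma$-equivariant identification $\pi^*\Gc \cong G_{\Eco}$ over $\Ct$, using the $G_\ad$-torsor structure of $\Eco$. Where you propose a local-trivialization check of the equivariance, the paper instead obtains the identification by descending along $\Eco \to \Ct$ the tautological $\Aut(G)$-equivariant evaluation map $\Ec \times_C G_C \to \Ec \times_C \Gc$, $(\phi,g)\mapsto(\phi,\phi(g))$, which makes the isomorphism canonical without any local computation.
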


\begin{proof} The map $\Ct \to C$ is a torsor for the group $\Gamma$, hence by \cref{lem:descentG} it is enough to check that $\pi^*\Gc \cong G_{\Eco}$.

Since $\Ct=\Eco/G_{\ad}$, this can be further reduced to show that there exists an $G_{\ad}\rtimes \Gamma$-equivariant isomorphism between  $\Eco \times_k G$ and $\Eco \times_{C} \Gc$.  To establish such an isomorphism, we first construct an ${\rm Aut}(G)$-equivariant isomorphism between $\Ec \times_{C} G_{C}$  and $\Ec \times_{C} \Gc$, where the action of ${\rm Aut}(G)$  is given by
\[ \Ec \times_{C} G_{C} \times  {\rm Aut}(G)  \to \Ec \times_{C} G_{C}, \qquad ((\phi, g) , \psi) \mapsto (\phi \circ \psi, \psi^{-1}(g))
\]
\[\Ec \times_{C} \Gc  \times {\rm Aut}(G)  \to \Ec \times_{C} \Gc , \qquad ((\phi, \alpha) , \psi) \mapsto (\phi \circ \psi, \alpha).
\]
It follows that the map
\[ f \colon  \Ec \times_{C} G_{C}  \to  \Ec \times_{C} \Gc , \qquad (\phi,g) \mapsto (\phi, \phi(g)) \] is ${\rm Aut}(G)$-equivariant and it is also an isomorphism which further restricts to an isomorphism 
 \[ \Eco \times_C G_C  \cong  \Eco \times_{C} \Gc ,\]
which is $G_{\ad}\rtimes \Gamma$-equivariant.  This concludes the lemma.  \end{proof}

\begin{prop} \label{prop:CaffineGtrivial}
If the smooth algebraic curve $C$ is affine, then $ \mathcal{G} \cong \pi_*( G \times \Ct  )^\Gamma $, where the action of $\Gamma$ on $G_{\Ct}$ is induced from the splitting \eqref{eq:splitting} and $\Gamma$ acts on $\Ct$ described above. 
\end{prop}

\begin{proof}
\Cref{lem:trivialGGamma} shows that the $(\Gamma, G_\ad)$-bundle ${\Eco}$ is isomorphic to the trivial $(\Gamma,G_\ad)$-bundle $\Ct \times G_\ad$. Then, the proposition follows from \cref{lem:reductive}. 
\end{proof}

We conclude this section with a description of generically split groups over $C$. For this purpose, we recall that a simple group scheme $\mathcal{G}$ over $C$ is called generically split if $\mathcal{G}_{k(C)}$ is a split simple group scheme over the function field $k(C)$ of $C$. 

\begin{prop} 
\label{prop:genericallyreductive}
The simple group scheme $\mathcal{G}$ is generically split if and only if $\mathcal{G}\cong G_{\Fc}:=\Fc \times^{G_{\ad}} G$ for some $G_{\ad}$-bundle $\Fc$ on $C$.  In particular, if $C$ is affine, then $\mathcal{G}$ is split generically if and only if $\mathcal{G}\cong G \times C$.  
\end{prop}

We note that when $C$ is affine, this result can also be found in \cite[Corollary 3.2.]{chernousov.gille.pianzola:2016:three}. 

\begin{proof}
Suppose that $\mathcal{G}\cong G_{\Fc}$ for some $G_{\ad}$-bundle $\Fc$ over $C$. Since generically $\Fc$ is trivializable, $\mathcal{G}$ is generically split.  

Conversely, by \cref{lem:reductive}  we know that  $\mathcal{G}\cong \pi_*( G_{\Eco}  )^\Gamma$. It is enough to show that if $\mathcal{G}$ is generically split, then necessarily $\Gamma$ must be trivial. Let $K$ (resp. $L$) be the function field of $C$ (resp. $\Ct$), so that ${\rm Gal}(L/K)=\Gamma$. It suffices to show that when $\Gamma$ is nontrivial,  $\mathcal{G}_K={\Res}_{L/K}(G_L)^\Gamma$ is not split.  The $K$-form $\mathcal{G}_K$ of $G_L$ gives a class $\kappa$ in $\Ho^1(\Gamma, {\rm Aut}(G)(L) )$. The pointed set  $\Ho^1(\Gamma, {\rm Aut}(G)(L) )$  classifies the isomorphism classes of all $K$-forms of $G_L$. Recall that $\Ho^1(\Gamma, {\rm Out}(G) )$ can be identified with the isomorphisms classes of all group homomorphisms from $\Gamma$ to ${\rm Out}(G)$.  With respect to the splitting $\iota: {\rm Out}(G) \to {\rm Aut}(G)$,  it induces a map $\Ho^1(\Gamma, {\rm Out}(G) )\to \Ho^1(\Gamma, {\rm Aut}(G)(L))$.  This map is injective, since the composition of ${\rm Out}(G)\to {\rm Aut}(G)(L)\to{\rm Out}(G) $ is the identity map.  We also observe that, the group homomorphism $u: \Gamma\to {\rm Out}(G)$ induced from the action of $\Gamma$ on $G$, corresponds to $\kappa$. Since $u$ is nontrivial, it follows that $\kappa$ is also nontrivial. Thus, $\mathcal{G}_K$ is not split. 

When $C$ is affine, $\Ec$ is trivializable.  Thus, the second statement also follows. 
\end{proof}

\subsection{Parahoric Bruhat--Tits group schemes over curves} 
Suppose that $C$ is a connected projective smooth curve over $k$. In this section we will further assume that $\car(k)=0$, assumption needed for \cref{thm:cover}.  Let $\pi\colon \Ct\to C$ be a $\Gamma$-covering of $C$. For any $x\in C$,  let $m_x$ denotes the \textit{ramification index} at $x$; equivalently for any $\tilde{x}\in \Ct$ such that $\pi(\tilde{x})=x$, we have $m_x=|\Gamma_{\tilde{x}}|$. 

\begin{lemma}
\label{thm:cover}
Let $C$ be any connected projective smooth curve $C$ over $k$. Let $g$ be the genus of $C$. There exists a connected $\Gamma$-covering $\pi\colon \hat{C}\to C$ of $C$  for some finite group $\Gamma$, with prescribed branched points $x_1,\cdots, x_s\in C$ and prescribed ramification indices $m_1,\cdots, m_s$ at $x_1,\cdots, x_s$, except the following cases:
\begin{enumerate}
\item $g=0$, $s=1$;
\item $g=0$, $s=2$ and $m_1\not=m_2$.
\end{enumerate}
\end{lemma}
\begin{proof}
Using \cite[Theorem 6.4.2]{serre:1992}, the proof proceeds as in the proof of \cite[Lemma 2.5]{poonen:2005}.
\end{proof}

Given a parahoric Bruhat--Tits group scheme $\mathcal{G}$ over $C$, we say that $\mathcal{G}$ is ramified at $x\in C$ if the fiber $\mathcal{G}_x$ at $x$ is not reductive.  If $x\in C$ is a ramified point of $\mathcal{G}$, then $\mathcal{G}|_{ \mathbb{D}_x }$ is a parahoric group scheme over $ \mathbb{D}_x$.

\begin{theorem}\label{thm:parahoricfromcoverings}
Let $\mathcal{G}$ be any parahoric Bruhat--Tits group scheme over a connected smooth projective curve $C$ of genus $g$ with $s$---possibly zero---ramified points in $C$. Assume that $\Gc|_{k(C)}$ is simply connected and let $G$ be the split semisimple and simply connected group over $k$ having the absolute type of $\Gc|_{k(C)}$. 
Suppose further that $(g, s) \neq (0, 1)$. Then, there exists a $\Gamma$-covering $\pi \colon \Ct\to C$ of $C$ for some finite group $\Gamma$ and a $(\Gamma, G_{\ad})$-bundle $\Ec$ where $\Gamma$ acts on $G_{\ad}$ by diagram automorphisms, such that $\mathcal{G}\cong  \pi_*(G_{\Ec})^\Gamma$. 
\end{theorem}

\begin{proof}[Proof of \cref{thm:parahoricfromcoverings}]
    Since the case $s=0$ is covered by \cref{lem:reductive}, we can assume that $s\geq 1$.

    We will begin with some notation. We first of all denote by $R=\{x_1,x_2,\cdots, x_s\}$ the set of all ramified points of $\mathcal{G}$ and by $S$ the set $\{1, \dots, s\}$. The affine curve $C\setminus R$ is denoted by $C^\circ$. For every $i \in S$, let $z_i$ be a formal parameter around $x_i$, and let $u_i \colon \Db \to \Db_{x_i} \subset C$ be the isomorphism identifying $z$ with $z_i$.

    By \cref{prop:CaffineGtrivial},   there exists an isomorphism $\mathcal{G}|_{C^\circ}\cong {\Res}_{\hat{C}^\circ / C^\circ }(G_{\hat{C}^\circ})^D$, for some \'etale $D$-covering $\hat{C}^\circ$ of $C^\circ$, and a faithful action of $D$ on $G$ preserving a triple $(B,T, e)$, where $T\subset B$ and $e$ is a pinning of the pair $(B,T)$.  Then, there exists an isomorphism $ u_i^*(\mathcal{G})|_{\Db^\times}\cong {\Res}_{\Db^\times_{r_i}/ \Db^\times  } ( G_{ \Db^\times}  ) ^{\tau_i}$,  with $\tau_i\in D$.  
 
    Since by definition $u_i^*\Gc$ is a parahoric group scheme, \cref{thm:BT}  tell us that there exists an isomorphism 
    \begin{equation}\label{local_isom_1}
    \phi_i\colon  u_i^*\mathcal{G}\cong  {\Res}_{\Db_{m_i}/\Db} (G_{\Db_{m_i}  })^{\sigma_i}, \end{equation}
    where $\sigma_i$ acts on $G$ by $\tau_i\circ {\rm Ad}_{t_i}$ for some $t_i\in T^{\tau_i}$, and $\sigma_i$ acts on $\Db_{m_i}$ as usual.  Note that, by  \cref{thm:local},  the isomorphism $\phi_i$ always exists if we replace $m_i$ by any of its multiples and  take the same action of $\sigma_i$ on $G$. In particular, when $g=0$ and $s=2$, we can assume that $m_1=m_2$, hence we can assume to be in the hypothesis of \cref{thm:cover}.

    By \cref{thm:cover}, there exists a  $\Gamma'$-covering $\qp \colon  \XC\to C$ of $C$ for some finite group $\Gamma'$, such that the ramification indices at each $x_i$ is $m_i$.    Then $\qp^*\mathcal{G}$ is a smooth group scheme over $\XC$. For each $x_i$,  choose a point $x'_i$ above $x_i$ and let $\Gamma'_i$ be the stabilizer of $x'_i$ in $\Gamma'$. Let $\Db_{x_i}$ (resp. $\Db_{x'_i}$) be the formal neighborhood of $x_i$ (resp. $x'_i$) in $C$ (resp. $\XC$).    Then the isomorphism \eqref{local_isom_1} can be reinterpreted as the following isomorphism

\begin{equation}\label{local_isom_2}
 \phi_i\colon   \mathcal{G}|_{\Db_{x_i} }\cong  {\Res}_{\Db_{x'_i}/ \Db_{x_i}}   (G_{\Db_{x'_i} }  )^{\Gamma'_i} ,\end{equation}
    where $\Gamma'_i=\langle \gamma_i \rangle  $ with $\gamma_i$ being of order $m_i$ and   acting on $G$ by $\sigma_i$. Then the isomorphism \eqref{local_isom_2} together with the natural isomorphism ${\Res}_{\Db_{x'_i}/ \Db_{x_i}}   (G_{\Db_{x'_i} }  )^{\Gamma'_i}\cong f^{\Gamma'}_*(G_{f^{-1}(\Db_{x_i}) }) $ give rise to the following isomorphism: 
    \[  \psi_i:  \mathcal{G}|_{\Db_{x_i} }\cong f^{\Gamma'}_*(G_{f^{-1}(\Db_{x_i}) }) ,   \]
    where $f^{-1}(\Db_{x_i})= \bigcup_{\gamma\in \Gamma'/\Gamma'_i} \Db_{\gamma \cdot x'_i}  $, and $G_{f^{-1}(\Db_{x_i})  }$ is the constant group scheme over $f^{-1}(\Db_{x_i})$. 

    Recall the maps $\epsilon$ and $\eta$ from \eqref{eq:unit} and \eqref{eq:counit}. 
    For each $1\leq i\leq s$,  the map $\epsilon$ induces a natural morphism of group schemes
    \[  \epsilon_i\colon  \qp^* ( \qp^{\Gamma'}_* (G_{f^{-1}(\Db_{x_i}) })    ) \to    G_{f^{-1}(\Db_{x_i}) }  , \]
    which is $\Gamma'$-equivariant and is an isomorphism over $f^{-1}(\Db^\times_{x_i})$, since $f$ is étale on $\Db^\times_{x_i}$. 
    There exists a unique smooth $\Gamma'$-group scheme  $\mathcal{G}'$ over $\XC$ such that $\mathcal{G}'|_{f^{-1}(C^\circ)}=\qp^*(\mathcal{G}|_{C^\circ})$, and for each $i$ an isomorphism $\psi'_i\colon \mathcal{G'}|_{f^{-1}(\Db_{x_i}) }\cong G_{f^{-1}(\Db_{x_i}) }$ of $\Gamma'$-group schemes such that 
    \[\psi_i' =\epsilon_i \circ  \qp^*(\psi_i)    .\]
    We now show that there is an isomorphism 
     \begin{equation}\label{eq:des1}
       \mathcal{G} \cong \qp^{\Gamma'}_*(\mathcal{G}' ). \end{equation}

    The group scheme $\mathcal{G}'$ is glued from $\qp^*(\mathcal{G}|_{C^\circ})$ and $\{ G_{f^{-1}(\Db_{x_i}) }  \}_{i=1,\cdots, s}$ via the transition isomorphisms $\{ \psi'_i \}_{i=1,\cdots, s}$.  By \cref{lem:descentG}, the map $\eta$ gives rise to an isomorphism 
\[ \eta_{C^\circ} \colon  \mathcal{G}|_{C^\circ}\cong  \qp^{\Gamma'}_*(f^*(\mathcal{G}_{C^\circ} )). \]
    Moreover, we also have isomorphisms $\psi_i \colon   \mathcal{G}|_{\Db_{x_i} }\cong f^{\Gamma'}_*(G_{f^{-1}(\Db_{x_i}) })$. We shall show that $\eta_{C^\circ}$ and $\{ \psi_i  \}_{i=1,\cdots,s}$ can glue to an isomorphism $\mathcal{G} \cong \qp^{\Gamma'}_*(\mathcal{G}' )$.  It suffices to show the commutativity of the following diagram:
\begin{equation}\label{main_thm_diagram}
\begin{tikzcd}
\mathcal{G}_{\Db^\times_{x_i}} \arrow{d}{\eta_{ \mathcal{G}|_{\Db^\times_{x_i}}  }} \arrow[rr, "\psi_i"] && f^{\Gamma'}_*(G_{f^{-1}(\Db_{x_i}) })  \\
f_*^{\Gamma'}\circ f^* (\mathcal{G}|_{\Db^\times_{x_i}} )  \arrow[rr,"f_*^{\Gamma'}\circ f^* (\psi_i)"  ] && f_*^{\Gamma'}\circ f^*\circ f_*^{\Gamma'} (G_{f^{-1}(\Db_{x_i}) }))  \arrow[u,"f_*^{\Gamma'}(\epsilon_i) "]
\end{tikzcd} ,\end{equation}
    where $f_*^{\Gamma'}(\epsilon_i)\circ f_*^{\Gamma'}\circ f^* (\psi_i)=f_*^{\Gamma'}(\psi'_i ) $. By the adjunction between $f^{\Gamma'}_*$ and $f^*$,  we have $f_*^{\Gamma'}(\epsilon_i)^{-1}= \eta_{f_*^{\Gamma'} (G_{f^{-1}(\Db_{x_i}) })  }$. Then, the commutativity of \eqref{main_thm_diagram} follows from the functoriality of the morphism ${\rm Id}\to f_*^{\Gamma'}\circ f^* $ of functors, applying to $\psi_i$. 

    Now, we proceed with a construction similar to that of \cref{lem:reductive}.  The scheme $\Ec= {\rm Iso}(G_{\XC}, \mathcal{G}' )$ is an ${\rm Aut}(G) $-bundle over $\XC$, with a commuting action of $\Gamma'$. Let $\Eco$ be a component of  $\Ec$.   The finite group ${\rm Out}(G)\times \Gamma'$ acts on $\Ec$.  Let $\Gamma$ be the subgroup of ${\rm Out}(G)\times \Gamma'$ which stabilizes the component $\Eco$ of $\Ec$.  Set $\Ct= \Eco/{G_{\ad}}$.  Then, $\pi \colon \Ct\to C$ is a $\Gamma$-covering of $C$, and $\Eco$ is a $(\Gamma, G_{\ad} )$-bundle over $\Ct$.

    Let $\XC^\circ$ (resp. $\Ct^\circ$) be the open unramified part of $\XC$ (resp. $\Ct$) with respect to the $\Gamma'$ (resp. $\Gamma$)-action.  Recall that $\XC= \Ec/ { {\rm Aut}(G)   }$ and ${\rm Aut}(G) =G_{\ad}\rtimes {\rm Out}(G)$. Let $D'$ be the stabilizer of the component $\Eco$ in ${\rm Out}(G)$ via the splitting \eqref{eq:splitting}. By \cref{lem:transitiveaction}, $\XC= \Eco/{(G_{\ad}\rtimes D' )}$.  Thus, the natural morphism $\QP\colon \Ct\to \XC$ is an \'etale $D'$-covering of $\XC$. Moreover, the projection map $\Gamma\to \Gamma'$ descends to an isomorphism of groups  $\Gamma/D'\cong \Gamma'$. 
 
    Finally, we are ready to verify that $\mathcal{G}\cong  \pi^\Gamma_*(G_{ \Eco })$.  Note that $\qp_* ^{{\Gamma'}} \circ  \QP_*^{ D'} \cong \pi_*^\Gamma$. By \eqref{eq:des1},  it suffices to check that $\QP_*(G_{\Eco})^{D'} \cong \mathcal{G}'$.   Since $\QP$ is \'etale, by \cref{lem:descentG},  it is enough to show that $G_{\Eco}\cong  q^*\mathcal{G}'$ as $\Gamma$-group schemes. 
    In other words, it suffices to show that 
\[ G_{\Eco}\cong \Ct\times_{\XC} (\Ec\times ^{{\rm Aut}(G) } G)   . \]
    By \cref{lem:transitiveaction},  $ \Ec\times ^{{\rm Aut}(G) } G\cong \Eco\times ^{G_{\ad}\rtimes D'} G$.  
    Thus, we are left to show that 
\[ G_{\Eco}\cong    \Ct\times_{\XC} ( G_{\Eco} / D'),\]
    which holds true, since  $\QP\colon \Ct\to \XC$ is \'etale. This concludes the proof of our theorem.  \end{proof}


\appendix

\section{Local types via alcoves} \label{appendix}

In this appendix we give a concrete realization of the non-abelian group cohomology $\Ho^1(\Gamma,G)$ as a consequence of \cref{thm:H1tauG}. As an application, we obtain an explicit classification of finite order automorphisms of $G$ which enhance the analogue classification provided by \cite[Theorem 8.6]{kac:1990:infinite} at the level of (twisted) affine Lie algebras, to positive characteristic (see \cref{thm:clas}). Along the way We also provide many concrete examples.  

One motivation to give a precise description of these spaces arises from the following geometric question: How to effectively describe $(\Gamma,G)$-bundles on a curve? In \cref{sec:localtypes}, we have shown that the local description of a $(\Gamma,G)$-bundle is determined by its local type, and that this is naturally identified with an element of $\Ho^1(\Gamma,G)$.

As in previous sections, throughout we assume that $G$ is a simple group over an algebraically closed field $k$. We further assume that $\Gamma$ is a cyclic group (with generator $\gamma$) acting on $G$, and that $\car(k)$ does not divide the order of $\Gamma$. 

\subsection{Diagram automorphisms} 
We will begin by recalling, via a pictorial description given in \cref{eg:tabledynkin}, the diagram automorphisms of simply laced groups. In particular we can see that only $D_4$ admits an automorphism of order $3$ and that none of the simple roots of $A_{2\ell}$ are $\Gamma$-invariant. 
 
\begin{table}[ht]\begin{tabular}{c|c}
     Type & Diagram automorphisms  \\ \hline
     $A_{2\ell -1}$&
     \dynkin[
edge length=.75cm,
labels*={\alpha_1,\alpha_2, \alpha_{\ell}, \alpha_{2\ell-2},\alpha_{2\ell -1}},
involution/.style={blue!50,<->},
involutions={15;24}]A{**.*.**} \\
$A_{2\ell}$& \dynkin[
edge length=.75cm,
labels*={\alpha_1,\alpha_2, \alpha_{\ell}, \alpha_{\ell+1},\alpha_{2\ell-1}, \alpha_{2\ell}},
involution/.style={blue!50,<->},
involutions={16;25;34}]A{**.**.**} \\
     $D_{\ell +1 }$& \dynkin[
edge length=.75cm,
labels*={\alpha_1,\alpha_2, ,\qquad \alpha_{\ell -1},\alpha_{\ell}, \alpha_{\ell+1}},
involution/.style={blue!50,<->},
involutions={65}]D{}  \\ 
$D_{4}$& {\begin{tikzpicture}[baseline]
     \dynkin[edge length=0.75cm]{D}{4} 
 \draw [blue!50,->](root 4) edge[bend right=-80]  (root 1);
 \draw [blue!50,->](root 1) edge[bend right=-80]  (root 3);
 \draw [blue!50,->](root 3) edge[bend right=-80]  (root 4);
  \node [below right] at (root 4) {{\tiny $\alpha_4$}};
    \node [left] at (root 1) {{\tiny $\alpha_1$}};
      \node [right] at (root 2) {{\tiny $\alpha_2$}};
        \node [above right] at (root 3) {{\tiny $\alpha_3$}};
      \end{tikzpicture}} \\
     $E_6$ & \dynkin[
edge length=.75cm,
labels*={\alpha_1,\alpha_6, \alpha_2, \alpha_3,\alpha_4, \alpha_5},
involution/.style={blue!50,<->},
involutions={16;35}]E6 
\end{tabular} 
\caption{Diagram automorphisms of simply laced groups. \label{eg:tabledynkin}}
 \end{table}

\begin{eg} \label{eg:H1Tsc} Let $G$ be a simply connected group, so that $\coX(T)$ has a basis of coroots, and let $T$ be a $\tau$-invariant torus, so that we can apply \cref{prop:H1Tcomb}. When $\tau$ is the trivial diagram automorphism, then it follows that $\coX(T)^\tau =\coX(T) = \norm_\tau(\coX(T))$, so that $\Ho^1_\tau(\Gamma,T)$ is isomorphic to $(\ZZ/m\ZZ)^{\oplus \ell}$, where $\ell$ is the rank of $G$ and $m = |\Gamma|$. One could obtain this result directly from the definition of $\Ho^1_\tau(\Gamma,T)$, which is in bijection with $T_m$, the $m$-torsion of the torus $T$. Assume now that $\tau$ is not trivial. In view of \cref{eg:tabledynkin}, we can explicitly spell out the generators of $\coX(T)^\tau$ and $\norm_\tau \coX(T)$.

\begin{center}
\begin{longtable}{|c|c|l|l| }
     \hline Order of $\tau$ & Type & $\ZZ$-generators of $\coX(T)^\tau$ & $\ZZ$-generators of $\norm_\tau \coX(T)$\\ \hline
     
      \multirow{7}{*}{Order 2} & \multirow{2}{*}{$A_{2\ell-1}$} &  $\calpha_\ell$ & $2 \calpha_\ell$ \\
      && $\calpha_i + \calpha_{2\ell -i}$  for $i \in \{1, \dots, \ell-1\}$
     & $ ( \calpha_i + \calpha_{2\ell -i})$  for $i \in \{1, \dots, \ell-1\}$\\ \cline{2-4}
     
     & $A_{2\ell}$ &  $ \calpha_i + \calpha_{2\ell +1 -i}$  for $i \in \{1, \dots, \ell-1\}$ & $(\calpha_i + \calpha_{2\ell +1-i})$  for $i \in \{1, \dots, \ell\}$\\  \cline{2-4}
     
     &\multirow{2}{*}{$D_{\ell +1 }$} &  $\calpha_i$ for $i \in \{1,\dots, \ell-1\}$ & $2 \calpha_i$ for $i \in \{1,\dots, \ell-1\}$\\
     && $\calpha_{\ell}+\calpha_{\ell+1}$ & $(\calpha_{\ell}+\calpha_{\ell+1})$ \\  \cline{2-4}

     & \multirow{2}{*}{$E_{6}$} &  $\calpha_3$, $\calpha_6$ & $2 \calpha_3$, $2 \calpha_6$,\\
     & & $\calpha_1+\calpha_5$, $\calpha_2+\calpha_4$ 
     & $(\calpha_1+\calpha_5)$,  $(\calpha_2+\calpha_4)$\\  \hline
     
     \multirow{2}{*}{Order $3$} & \multirow{2}{*}{$D_4$} & $\calpha_2$ & $3 \calpha_2$ \\
     && $(\calpha_1+ \calpha_3+ \calpha_4)$ & $(\calpha_1+ \calpha_3+ \calpha_4)$\\ \hline
\end{longtable}\end{center}

Hence we can explicitly compute $\Ho^1_\tau(\Gamma,T)$ in this situation.

\begin{center}\renewcommand{\arraystretch}{1.6}
\begin{longtable}{|c|c|c|}
     \hline Order of $\tau$ & Type & $\Ho^1_\tau(\Gamma,T)$ \\ \hline
    \multirow{4}{*}{Order 2} & $A_{2\ell -1}$  & $\frac{\ZZ}{m\ZZ} \times \left(\frac{\ZZ}{(m/2)\ZZ}\right)^{\ell -1}$\\ \cline{2-3}
    &$A_{2\ell}$ & $\left( \frac{\ZZ}{(m/2)\ZZ}\right)^{\ell}$\\ \cline{2-3}
    &$D_{\ell +1}$ & $\left( \frac{\ZZ}{m \ZZ}\right)^{\ell-1} \times \frac{\ZZ}{(m/2)\ZZ}$\\\cline{2-3}
    &$E_6$ & $\left(\frac{\ZZ}{m\ZZ}\right)^{2} \times \left(\frac{\ZZ}{(m/2)\ZZ}\right)^{2}$\\  \hline
    Order 3 &$D_4$ & $\frac{\ZZ}{m \ZZ} \times \frac{\ZZ}{(m/3)\ZZ}$\\ \hline
\end{longtable}\end{center}
In particular, if we assume that $\tau$ has order $2$ and $m=2$, then we can see that $\Ho^1_\tau(\Gamma,T)$ is trivial in type $A_{2\ell}$ for $\ell \geq 1$, while it consists of two elements for type $A_{2\ell-1}$ with $\ell \geq 2$. From this, and \cref{lem:TsurgG}, we can immediately deduce that $\Ho^1_\tau(\Gamma,\SL_{2\ell+1})$ is trivial for $\ell \geq 1$. We can actually show that $\Ho^1_\tau(\Gamma,\SL_{2\ell})$ has indeed two elements for $\ell \geq 2$.
Let $B$ denote a Borel of $G = \SL_{2\ell}$ containing $T$ on which $\tau$ acts. One obtains that the map $\phi$
\[ \xymatrix{ (G/B)^\Gamma \ar[r]^-\phi &  \Ho^1_\tau(\Gamma, B) \ar[r]^-\psi & \Ho^1_\tau(\Gamma, G) \ar[r] &0,}
\] is zero.  
Since the map $\psi$ is surjective and  $|\Ho^1_\tau(\Gamma, B)|=|\Ho^1_\tau(\Gamma, T)|=2$, then also $|\Ho^1_\tau(\Gamma, G)|= 2$, concluding the example. 
\end{eg}

\begin{eg}
\label{eg:diagram}
We now use \cref{thm:H1tauG} and \cref{eg:H1Tsc} to compute $\Ho^1_\tau(\Gamma,G)$ for simply connected groups $G$ in which the order of $\Gamma$ equals the order of $\tau$. We summarize the result in the table below.

\begin{center}\renewcommand{\arraystretch}{1.6}
\begin{tabular}{|c|c|c|}
     \hline Order of $\tau$ & Type & $\Ho^1_\tau(\Gamma,G)$ \\ \hline
     Order 1 & Any & $\{[0]\}$ \\ \hline
    \multirow{4}{*}{Order 2} & $A_{2\ell -1}$  & $\{[0], [\calpha_\ell]\}$ \\ \cline{2-3}
    &$A_{2\ell}$ & $\{[0]\}$\\ \cline{2-3}
    &$D_{\ell +1}$ & $\{[0], [\calpha_\ell]\}$\\\cline{2-3}
    &$E_6$ & $\{[0], [\calpha_2]\}$ \\  \hline
    Order 3 &$D_4$ & $\{ [0], [\calpha_2]\}$\\ \hline
\end{tabular}\end{center}

When $\tau$ is non trivial, and when $G$ is of type $A_{2\ell}$ there is nothing to show since $\Ho^1_\tau(\Gamma,T)$ is already trivial. In type $A_{2\ell-1}$ the Weyl group $W^\tau$ acts trivially on the invariant coroots, so $\Ho^1_\tau(\Gamma,G)=\Ho^1_\tau(\Gamma,T)$. In the other cases with $\tau$ of order $2$, the Weyl group $W^\tau$ permutes the non zero invariant coroots and so we always have two elements in $\Ho^1_\tau(\Gamma,G)$, one represented by $0$ and the other one by the class of any of the invariant simple coroots. Finally, for $D_4$ and $\tau$ of order $3$, we have that $\sigma_2(\calpha_2)=-\calpha_2$ and so the two non trivial elements of $\Ho^1_\tau(\Gamma,T)$ are identified in $\Ho^1_\tau(\Gamma,G)$, which then has $2$ elements.
\end{eg}

\begin{eg} \label{eg:tautrivial} Let $\Gamma=\ZZ/m\ZZ$ and assume that $\tau$ is the trivial diagram automorphism. Then $\Ho^1_\tau(\Gamma,G)$ is identified with the quotient of $T_m$ (the $m$-th torsion of the torus $T$) by the Weyl group $W$ of $G$.
For instance, when $G=\SL_n$ and $m=2$, then \[\Ho^1_\tau(\Gamma,\SL_n) = \left\lceil \frac{n+1}{2}\right\rceil .
\] Similarly, for general $m$ and for $G=\SL_2$ we have
\[\Ho^1_\tau(\Gamma,\SL_2)= \left\lceil \frac{m+1}{2}\right\rceil.\]
\end{eg}

It was proved by Adams and Ta\"{\i}bi \cite{Adams-Taibi:2018} that the Galois cohomology of real groups is isomorphic to $\Ho^1_\theta(\Gamma, G_{\mathbb{C}})$ for the holomorphic involutions arising from Cartan involutions, and they determined the cardinality of those $\Ho^1_\theta(\Gamma, G_{\mathbb{C}})$. In \cite{Borovoi-Timashev:2021}, Borovoi and Timashev computed the Galois group of real groups in terms of Kac labels when $m=2$. This is similar to what we will do for general $m$ in what follows.

\subsubsection{Description through alcove} In this section we further describe $\Ho^1_\tau(\Gamma,G)$ as representing equivalence classes of rational points in the fundamental alcove for $\coX(T)^\tau$. We first begin with some notation that we use throughout. 

Let $\tau$ be a diagram automorphism on $G$ of order $r$ (possibly trivial). Let $G^\tau$ be the fixed subgroup of $\tau$ in $G$. Then, $G^\tau$ is connected and simple. 
Following \cite[p.8]{Hong-Kumar:2019}, set 
\begin{equation*}
  \theta_0=\begin{cases}
  \text{highest root of } G &\text{ if } r=1;\\
  \text{highest short root of }G^\tau   &\text{ if $r>1$ and } G\not= A_{2\ell}  ;\\
  \text{2 $\cdot$ highest short root of } G^\tau    &\text{ if $r=2$ and $G=A_{2\ell}$ }.  \end{cases}
\end{equation*}
Let $\check{\theta}_0$ be the dual of $\theta_0$, in other words:
\begin{equation*}
  \check{\theta}_0=\begin{cases} 
  \text{highest short  coroot of } G &\text{ if } r=1;\\
  \text{highest  coroot of }G^\tau   & \text{ if $r>1$ and } G\not= A_{2\ell}  ;\\
  \frac{1}{2} \cdot \text{ highest coroot of } G^\tau    & \text{ if $r=2$ and $G=A_{2\ell}$ }.
  \end{cases}
\end{equation*}
Let $\check{M}$ be the lattice spanned by $W^\tau\cdot \check{\theta}_0$. Then
\begin{equation*}
 \check{M}=\begin{cases} 
 \text{coroot lattice of $G$} &\text{ if } r=1;\\
  \text{lattice of long coroots of $G^\tau$}   &\text{ if $r>1$ and } G\not= A_{2\ell}  ;\\
  2\cdot \text{lattice of long coroots of $G^\tau$ }     & \text{ if $r=2$ and $G=A_{2\ell}$ }.
  \end{cases}
\end{equation*}

We consider the affine Weyl group $W^\tau_{a}:= \check{M} \rtimes W^\tau$, which naturally acts on $X_*(T)^\tau_{\mathbb{R}}$. Then the quotient $X_*(T)^\tau_{\mathbb{R}}/W^\tau_{a}$ is in bijection with the fundamental alcove   
\[ \Ac(G^\tau)=\left\lbrace \lambda \in \coX(T)^\tau_\RR \text{ such that } ( \lambda, \beta_i ) \geq 0 \text{ for } 1 \leq i \leq \ell \text{ and } ( \lambda, \theta_0 ) \leq 1 \right\rbrace,
\]
where $\beta_1, \dots, \beta_\ell$ are the simple roots of $G^\tau$.
In what follows we will use the notation $\Arm(G^\tau)$, or simply $\Arm$, to denote the finite set $\Ac(G^\tau) \cap   \frac{r}{m} X_*(T)^\tau$, for non zero integers $r$ and $m$.

\begin{lemma}\label{lemma:alcove} Let $\Gamma =\langle \gamma \rangle$ be the cyclic group of order $m$ and assume that $\gamma$ acts on the simple algebraic group $G$ via a diagram automorphism $\tau$ of order $r$. Assume that $\car(k)$ does not divide $m$. Then there is a surjective map
\[ \phi \colon \Arm \to \Ho^1_\tau(\Gamma, G)\] which is an isomorphism when $G$ is simply connected. 
\end{lemma}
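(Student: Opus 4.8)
The plan is to present $\Ho^1_\tau(\Gamma,G)$ as the quotient of the lattice $\tfrac{r}{m}\coX(T)^\tau$ by an affine reflection group whose fundamental alcove is $\Ac(G^\tau)$, and then read off $\Arm$ as a set of orbit representatives. The map will be $\phi(\lambda):=\big[\mu(\zeta_{m/r})\big]\in\Ho^1_\tau(\Gamma,G)$, where one writes $\lambda=\tfrac{r}{m}\mu$ with $\mu\in\coX(T)^\tau$ (possible exactly because $\lambda\in\tfrac{r}{m}\coX(T)^\tau$); here $\mu(\zeta_{m/r})$ is a cocycle since $\mu$ is $\tau$-fixed and $\zeta_{m/r}^m=1$, and it is unchanged if one replaces $\mu$ by $\mu+\tfrac{m}{r}\nu$ for $\nu\in\check M$, because $(\tfrac{m}{r}\nu)(\zeta_{m/r})=\nu(\zeta_{m/r}^{m/r})=1$.

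For $G$ simply connected I would obtain the bijection as follows. \cref{thm:H1tauG} gives $\Ho^1_\tau(\Gamma,G)\cong\tfrac{r}{m}\coX(T)^\tau/(\norm_\tau(\coX(T))\rtimes W^\tau)$, so the entire content is the equality of lattices $\norm_\tau(\coX(T))=\check M$ inside $\coX(T)^\tau$. Since $G$ is simply connected, $\coX(T)$ is the coroot lattice, so $\norm_\tau(\coX(T))$ is spanned by the $\norm_\tau(\calpha)$ over simple coroots $\calpha$ --- these are exactly the generators tabulated in \cref{eg:H1Tsc} --- while $\check M$ is spanned by $W^\tau\cdot\check{\theta}_0$, i.e. it is the coroot lattice of $G$ when $r=1$ and the long-coroot lattice of $G^\tau$ (twice it in type $A_{2\ell}$) when $r>1$. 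I would verify $\norm_\tau(\coX(T))=\check M$ by running through the short list of diagram automorphisms ($r=1$ in any type; $r=2$ in types $A_{2\ell-1},A_{2\ell},D_{\ell+1},E_6$; $r=3$ in type $D_4$) and matching the two sets of generators. Granting this, $W^\tau_a=\check M\rtimes W^\tau$, so $\Ho^1_\tau(\Gamma,G)\cong\tfrac{r}{m}\coX(T)^\tau/W^\tau_a$; since $r\mid m$ one has $\check M\subseteq\coX(T)^\tau\subseteq\tfrac{r}{m}\coX(T)^\tau$, so $\tfrac{r}{m}\coX(T)^\tau$ is $W^\tau_a$-stable, and because $\Ac(G^\tau)$ is a strict fundamental domain for $W^\tau_a$ acting on $\coX(T)^\tau_{\RR}$, every $W^\tau_a$-orbit in $\tfrac{r}{m}\coX(T)^\tau$ meets $\Arm=\Ac(G^\tau)\cap\tfrac{r}{m}\coX(T)^\tau$ in exactly one point. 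Hence $\Arm\hookrightarrow\tfrac{r}{m}\coX(T)^\tau\twoheadrightarrow\tfrac{r}{m}\coX(T)^\tau/W^\tau_a\cong\Ho^1_\tau(\Gamma,G)$ is a bijection, and unwinding the isomorphism of \cref{thm:H1tauG} via \cref{pp:H1H2} identifies it with the map $\phi$ above.

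For general $G$ I would obtain surjectivity by the same machine. First, the computation of \cref{prop:H1Tcomb} --- and hence the identification $\Ho^1_\tau(\Gamma,G)\cong\tfrac{r}{m}\coX(T)^\tau/(\norm_\tau(\coX(T))\rtimes W^\tau)$ of \cref{thm:H1tauG} --- holds for an arbitrary simple $G$ over any algebraically closed $k$ with $\car(k)\nmid m$: simple connectedness (or adjointness) was used only to make $T^\tau$ connected, and in general one argues instead with the divisible group $T$ and the $\langle\tau\rangle$-module isomorphism $T[m]\cong\coX(T)/m\coX(T)$, exactly as in the $\CC$-argument of \cref{pp:H1H2}. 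Next, the simply connected isogeny $G_{\mathrm{sc}}\to G$ gives $\coX(T)\supseteq\coX(T_{\mathrm{sc}})=Q^\vee$, hence $\norm_\tau(\coX(T))\supseteq\norm_\tau(Q^\vee)=\check M$ by the case check, while $W^\tau$ and $\Ac(G^\tau)$ are the same as for $G_{\mathrm{sc}}$. Consequently there is a natural surjection $\tfrac{r}{m}\coX(T)^\tau/(\check M\rtimes W^\tau)\twoheadrightarrow\Ho^1_\tau(\Gamma,G)$, and precomposing it with the fundamental-domain bijection $\Arm\cong\tfrac{r}{m}\coX(T)^\tau/(\check M\rtimes W^\tau)$ yields the surjection $\phi$ (which is injective precisely when the last surjection is, i.e. when $\norm_\tau(\coX(T))=\check M$, which holds for $G$ simply connected by the case check).

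The only real obstacle is the lattice identification $\norm_\tau(\coX(T))=\check M$ for $G$ simply connected: a finite but fiddly case check over the handful of diagram automorphisms, with type $A_{2\ell}$ singled out by the factor-two rescalings in the definitions of $\theta_0$ and $\check M$ (cf. \cite{Hong-Kumar:2019}); one must also reconcile with care the $\tfrac{r}{m}$-scaling built into $\Arm$ against the unscaled translation lattice $\check M=\norm_\tau(\coX(T))$ of $W^\tau_a$. The rest --- the fundamental-domain bijection, the compatibility with $\phi$, and the mild extension of \cref{prop:H1Tcomb} --- is formal.
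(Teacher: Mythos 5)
Your structural argument is the same as the paper's: since $\Ac(G^\tau)$ is a strict fundamental domain for $W^\tau_a=\check{M}\rtimes W^\tau$, one has $\Arm\cong \tfrac{r}{m}\coX(T)^\tau/(\check{M}\rtimes W^\tau)$; the containment $\check{M}\subseteq\norm_\tau(\coX(T))$, obtained by passing to the simply connected cover and matching generators as in \cref{eg:H1Tsc}, gives a surjection onto $\tfrac{r}{m}\coX(T)^\tau/(\norm_\tau(\coX(T))\rtimes W^\tau)\cong\Ho^1_\tau(\Gamma,G)$ via \cref{thm:H1tauG}, and the equality $\norm_\tau(\coX(T))=\check{M}$ in the simply connected case makes this a bijection. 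That is precisely how the paper argues, and this part of your proposal is fine.

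There are, however, two concrete problems in your write-up. First, your explicit formula $\phi(\tfrac{r}{m}\mu)=[\mu(\zeta_{m/r})]$ is not the map induced by \cref{thm:H1tauG}: unwinding \cref{pp:H1H2} and the rescalings in \cref{prop:H1Tcomb}, the class attached to $\lambda=\tfrac{r}{m}\mu$ is $[\mu(\zeta_m)]$ (over $\CC$, $[e(\lambda/r)]$), not $[\mu(\zeta_{m/r})]$. Indeed, when $m=r>1$ your formula reads $\mu(\zeta_1)=1$ for every $\lambda$, so your $\phi$ would be constant and could not be surjective (e.g.\ $G=\SL_4$, $m=r=2$, where $\Ho^1_\tau(\Gamma,G)$ has two elements); accordingly, the well-definedness under translation by $\check{M}$ is not the triviality $(\tfrac{m}{r}\nu)(\zeta)=1$, but follows from $\check{M}\subseteq\norm_\tau(\coX(T))$ together with the fact that classes in $\Ho^1_\tau(\Gamma,T)$ are unchanged by $\norm_{\tau,m}$-shifts. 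Since the lemma only asserts the existence of a surjection, your composite construction repairs this once the erroneous identification ``$\phi$ equals this formula'' is dropped or corrected. Second, your claim that \cref{prop:H1Tcomb} extends to every isogeny type over any $k$ because simple connectedness ``was used only to make $T^\tau$ connected'' misreads that proof: it is also used to get $\Ho^1_\tau(\Gamma,\coX(T))=0$ via the permutation basis, which is special to the simply connected and adjoint lattices (for $G=\mathrm{SO}_{2n}$ with $\tau$ of order $2$ one has $\Ho^1_\tau(\Gamma,\coX(T))\cong\ZZ/2\ZZ$), and the complex case in \cref{pp:H1H2} uses the exponential sequence, which has no verbatim analogue over general $k$; so that extension is not justified as written, though it is also not needed, since the paper's lemma rests on \cref{thm:H1tauG} under its stated hypotheses.
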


\begin{proof} We first of all note that
\[\Arm \cong  \dfrac{\frac{r}{m}\coX(T)^\tau}{\check{M}\rtimes W^\tau} ,\] so that, in view of \cref{thm:H1tauG}, it will be enough to show that lattice $\norm_\tau(\coX(T))$ contains $\check{M}$. Let $T_{sc}$ denote the maximal torus of the simply connected cover $G_{sc}$ of $G$ preseved by $\tau$. Since $\coX(T_{sc})$ equals the coroot lattice and $\coX(T_{sc}^\tau)=\coX(T_{sc})^\tau$, one can directly show that $\norm_\tau(\coX(T_{sc}))$ is naturally identified with $\check{M}$ (e.g. using the computations of \cref{eg:H1Tsc}).
Since $\norm_\tau(\coX(T_{sc})) \subset \norm_\tau(\coX(T))$, the above argument shows that $\check{M}$ is contained in $\norm_\tau(\coX(T))$.
\end{proof}

\begin{eg} Let $G$ be a group of type $A_3$ and $\tau$ be its non trivial diagram automorphism of order $2$. Let $m=2k$ be the cardinality of the cyclic group $\Gamma$. From \cref{lemma:alcove} it follows that $\Ho^1_\tau(\Gamma,G)$ is in bijection with $\Ac_{2/2k}$ when $G$ is simply connected, i.e. for $G=\SL_4$. 
The group $G^\tau$ is of type $C_2$, with simple roots $\{ \alpha, \beta\}$ and with highest shortest root $\theta_0=\alpha + \beta$. 
If the group is simply connected, then $X_*(T^\tau)$ is spanned by the coroots $\calpha=\frac{\alpha}{2}$ and $\cbeta=\beta$. Then, an element $\frac{A}{k} \calpha + \frac{B}{k} \beta  \in \frac{1}{k} \coX(T^\tau)$ belongs to $\Ac_{2/2k}$ if and only if
\[ 2A -B \geq 0 \qquad -2A+2B \geq 0 \qquad  B \leq k.\]
This implies that $A$ and $B$ are necessarily integers between $0$ and $k$. We can therefore compute that
\[|\Ho^1_\tau(\Gamma,\SL_4)| = | \Ac_{2/2k}| = \begin{cases}
(\ell + 1)^2 & k = 2 \ell \\
(\ell +1)(\ell +2) & k = 2 \ell +1.
\end{cases}
\]
One can similarly compute the cardinality of $\Ho^1_\tau(\Gamma,\SL_6)$, where $\Gamma$ is the cyclic group of order $2k$ and $\tau$ acts on $\SL_6$ via the non trivial diagram automorphism. One obtains
\[|\Ho^1_\tau(\Gamma,\SL_6)| = \begin{cases}
\dfrac{\ell(\ell + 1)(\ell +2)}{3} & k = 2 \ell \\
\dfrac{(\ell +1)(\ell +2)(\ell+3)}{3} & k = 2 \ell +1
\end{cases}.
\] 
\end{eg}

\medskip We have seen in the example above how  \cref{lemma:alcove} allows us to translate the problem of computing $\Ho^1_\tau(\Gamma,G)$ into counting the number of integer solutions of a system of inequalities. These inequalities are explicitly found using the Cartan matrix of the group $G^\tau$. 

\smallskip  We use this point of view to describe $\Ho^1_\tau(\Gamma, G)$ for $G$ of adjoint type. We first set up some notation. If the group $G$ is of type $X_n$ and $\tau$ has order $r$, then the coefficients $a_i$ of $\theta_0=\sum_{i=1}^\ell a_i \beta_i$ for the group $G^\tau$ are the Kac's labels at the $i$-th vertex of the (twisted) affine Dynkin diagram of type $X_n^{(r)}$ (see \cite[Tables Aff 1, Aff 2 and Aff 3,  at pages 54 and 55]{kac:1990:infinite}), except when $G$ is of type $A_{2\ell}$ and $\tau$ has order $2$. In this latter case, $\theta_0= \sum_{i=1}^\ell 2 \beta_i$, or equivalently $a_i$ corresponds to Kac's label of $A_{2\ell}^{(2)}$ at the $(i-1)$-st vertex.

Using this notation, we define $\Srm(G^\tau)$, or simply denoted $\Srm$, to be the set whose elements are an $(\ell+1)$-tuple $(s_i)_{i=0,1,\cdots, \ell}$ of integers, satisfying 
\[ s_i\geq 0 \quad \text{for all $i\in\{0,\dots, \ell\}$,} \quad \text{ and } \quad    \sum_{i=0}^\ell a_i s_i= \frac{m}{r},\]
where $a_0=1$ (this is again Kac's label at the $0$-th vertex of the (twisted) affine Dynkin diagram, except in case $A_{2\ell}^{(2)}$, where it is Kac's label at the $\ell$-th vertex). 

\begin{lemma} \label{lemma:ArmSrm} Let $\Gamma =\langle \gamma \rangle$ be the cyclic group of order $m$ and assume that $\gamma$ acts on the simple algebraic group $G$ via a diagram automorphism $\tau$ of order $r$. Assume that $\car(k)$ does not divide $m$. 
If $G$ is of adjoint type, then $\Arm \cong \Srm$.
\end{lemma}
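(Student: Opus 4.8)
The plan is to produce an explicit affine-coordinate bijection between the fundamental alcove $\Ac(G^\tau)$ and the real simplex $\Sigma=\{(s_i)_{i=0}^\ell\in\RR_{\ge 0}^{\ell+1}:\sum_{i=0}^\ell a_i s_i=m/r\}$, and then to check that it carries the lattice $\tfrac rm\coX(T)^\tau$ onto the set of integer points of $\Sigma$. Since $G^\tau$ and $\coX(T)^\tau$ have the same rank $\ell$, the simple roots $\beta_1,\dots,\beta_\ell$ of $G^\tau$ form a basis of the real vector space dual to $\coX(T)^\tau_\RR$, so a point $\lambda\in\coX(T)^\tau_\RR$ is recorded faithfully by the list of values $(\lambda,\beta_i)$. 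For $\lambda\in\Ac(G^\tau)$ I set
\[ s_i:=\tfrac mr(\lambda,\beta_i)\ \ (1\le i\le\ell),\qquad s_0:=\tfrac mr\bigl(1-(\lambda,\theta_0)\bigr). \]
Because $\theta_0=\sum_{i=1}^\ell a_i\beta_i$ and $a_0=1$, one gets $\sum_{i=0}^\ell a_i s_i=m/r$ identically, and the two families of defining inequalities of $\Ac(G^\tau)$ --- namely $(\lambda,\beta_i)\ge 0$ for $1\le i\le\ell$ and $(\lambda,\theta_0)\le 1$ --- translate exactly into $s_i\ge 0$ for $i\ge 1$ and $s_0\ge 0$. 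Reversing this (any point of $\Sigma$ determines a unique $\lambda\in\coX(T)^\tau_\RR$ with $(\lambda,\beta_i)=\tfrac rm s_i$, and that $\lambda$ automatically lies in $\Ac(G^\tau)$), one obtains a bijection $\Ac(G^\tau)\xrightarrow{\ \sim\ }\Sigma$.

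It then remains to match $\Arm=\Ac(G^\tau)\cap\tfrac rm\coX(T)^\tau$ with the integer points of $\Sigma$, which is $\Srm$. Since $r\mid m$ and the marks $a_i$ are integers, $s_0=\tfrac mr-\sum_{i\ge 1}a_i s_i$ is automatically an integer as soon as $s_1,\dots,s_\ell$ are; so the entire claim reduces to the single lattice identity, valid when $G$ is adjoint:
\[ \coX(T)^\tau=\bigl\{\mu\in\coX(T)^\tau_\RR:(\mu,\beta_i)\in\ZZ\text{ for }1\le i\le\ell\bigr\}. \]
Applied to $\mu=\tfrac mr\lambda$, this turns the condition $\lambda\in\tfrac rm\coX(T)^\tau$ into $s_1,\dots,s_\ell\in\ZZ$, which is precisely what is needed, and the proof of \cref{lemma:ArmSrm} is then complete. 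I would prove the identity by hand. The inclusion $\subseteq$ is valid for every $G$: each $\beta_i$ is the restriction to $T^{\tau,\circ}$ of the simple root $\alpha_i$, i.e. the image of $\alpha_i$ under $X^*(T)\twoheadrightarrow X^*(T^{\tau,\circ})$ (the integral incarnation of $\beta_i=\Av_\tau(\alpha_i)$), hence $\beta_i$ pairs integrally with $\coX(T)^\tau=\coX(T^{\tau,\circ})$. The inclusion $\supseteq$ is where adjointness is used: when $G$ is adjoint, $X^*(T)$ is the root lattice $\sum_i\ZZ\alpha_i$, so its quotient $X^*(T^{\tau,\circ})$ is generated over $\ZZ$ by $\beta_1,\dots,\beta_\ell$; hence any $\mu\in\coX(T)^\tau_\RR$ pairing integrally with every $\beta_i$ pairs integrally with all of $X^*(T^{\tau,\circ})$, i.e. $\mu\in\coX(T^{\tau,\circ})=\coX(T)^\tau$.

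The step I expect to be the real work is not this lattice identity --- which is uniform across the types --- but the structural input $\theta_0=\sum_{i=1}^\ell a_i\beta_i$ together with $a_0=1$. Establishing it amounts to matching $\theta_0$ and the marks $a_i$ with the (twisted) affine Dynkin diagram $X_n^{(r)}$ of Kac's tables Aff\,1--3, and in particular checking that the special normalizations forced in type $A_{2\ell}$ with $r=2$ --- taking $\theta_0=2\cdot(\text{highest short root of }G^\tau)$, the factor $\tfrac12$ in $\check\theta_0$, the factor $2$ in $\check M$, and the shift in the indexing of the marks with $a_0$ the mark at the $\ell$-th vertex --- are exactly the ones dictated by the corresponding twisted affine datum. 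Once this bookkeeping is in place, the bijection described above is a formal consequence of the definitions of $\Arm$ and $\Srm$.
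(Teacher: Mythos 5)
Your proposal is correct and follows essentially the same route as the paper: both translate the alcove conditions into the coordinates $s_i$ dual to the simple roots $\beta_i$ of $G^\tau$ (your $s_i=\tfrac mr(\lambda,\beta_i)$ is the same as the paper's expansion $\lambda=\tfrac rm\sum_i s_i\cvarpi_i$ in fundamental coweights) and use that, for adjoint $G$, the lattice $\coX(T)^\tau$ is exactly the coweight lattice of $G^\tau$. The only difference is that you spell out the proof of this lattice identity via surjectivity of restriction on character lattices (which the paper simply asserts), and you worry about matching the $a_i$ with Kac's labels, which is not needed for the lemma since the paper defines the $a_i$ as the coefficients of $\theta_0$ in the basis $\beta_1,\dots,\beta_\ell$ and sets $a_0=1$ by definition.
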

\begin{proof} Since $G$ is of adjoint type, then $\coX(T^\tau)$ is spanned by the coweights $\cvarpi_1,\dots, \cvarpi_\ell$, so every element $\lambda \in \Arm$ can be written as $\frac{r}{m}\sum_{i=1}^\ell s_i\cvarpi_i$ for some integers $s_i$. From $( \lambda, \beta_i ) \geq 0$ we deduce that $s_i \geq 0$ for all $i \in \{1,\dots, \ell\}$. Moreover, $( \lambda, \theta ) \leq 1$ is equivalent to $\sum_{i=1}^\ell s_ia_i \leq \frac{m}{r}$, that is $s_0:=\frac{m}{r}-\sum_{i=1}^\ell s_i a_i \geq 0$. 
\end{proof}

We introduce the equivalence relation $\sim$ on $\Srm$ which is induced by the outer automorphisms  of the affine Dynkin diagrams (possibly twisted). For example, if the affine Dynkin diagram has no outer symmetry, then the equivalence relation is trivial. If, for instance,  $G$ is of type $A_{2\ell-1}$ and the order $r$ of $\sigma$ is $2$, the equivalence relation $\sim$ is given by  
\[ (s_i) \sim (s_i') \qquad \text{ if and only if } \qquad  s_0=s'_1, \quad s_1=s_0', \text{ and }s_i=s_i' \text{ for }i \geq 2.\]
Let $\Srm/\!\sim$ be the set of the equivalence classes of $\sim$. 

\begin{prop}\label{prop:alcove}
Under the assumptions of \cref{lemma:ArmSrm}, there is a natural bijection \[\Ho_\tau^1(\Gamma, G)\cong \Srm/\!\sim.\]
\end{prop}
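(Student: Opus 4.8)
The plan is to leverage the chain of identifications already built. By \cref{lemma:ArmSrm} we have $\Arm \cong \Srm$ via $\lambda = \frac{r}{m}\sum_{i=1}^\ell s_i \cvarpi_i \mapsto (s_0, s_1, \dots, s_\ell)$ with $s_0 = \frac{m}{r} - \sum_{i=1}^\ell s_i a_i$, and by \cref{lemma:alcove} there is a surjection $\phi \colon \Arm \to \Ho^1_\tau(\Gamma, G)$. The whole content is therefore to identify the fibers of $\phi$ (equivalently, the fibers of the composite $\Srm \to \Ho^1_\tau(\Gamma, G)$) with the orbits of the equivalence relation $\sim$ coming from the outer symmetries of the (twisted) affine Dynkin diagram. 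Concretely, by \cref{thm:H1tauG} one has $\Ho^1_\tau(\Gamma, G) \cong \frac{(r/m)\coX(T)^\tau}{\norm_\tau(\coX(T)) \rtimes W^\tau}$, while $\Arm \cong \frac{(r/m)\coX(T)^\tau}{\check M \rtimes W^\tau}$; so two elements of $\Arm$ have the same image in $\Ho^1_\tau(\Gamma, G)$ exactly when they lie in the same orbit of the larger group $\norm_\tau(\coX(T)) \rtimes W^\tau$. Since $G$ is adjoint, $\coX(T)$ is the coweight lattice of $G^\tau$, so $\norm_\tau(\coX(T))$ is a lattice containing $\check M$, and the quotient $\norm_\tau(\coX(T))/\check M$ is precisely the fundamental group $\pi_1(G^\tau_{\mathrm{ad}}) = \coX(T)^\tau/\check M$ — which is well known (cf. \cite[Ch.~8]{kac:1990:infinite}) to act on the alcove $\Ac(G^\tau)$ (equivalently on the affine Dynkin diagram $X_n^{(r)}$) as the group of outer automorphisms of that diagram, preserving the affine hyperplane arrangement.

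The key steps, in order, are as follows. First, I would spell out the action of the extended affine Weyl group $W^\tau_{\mathrm{ext}} := \norm_\tau(\coX(T)) \rtimes W^\tau$ on $\coX(T)^\tau_\RR$ and observe that it contains $W^\tau_a = \check M \rtimes W^\tau$ as a normal subgroup with $W^\tau_{\mathrm{ext}}/W^\tau_a \cong \norm_\tau(\coX(T))/\check M$. Second, using that the alcove $\Ac(G^\tau)$ is a fundamental domain for $W^\tau_a$, I would deduce that $\Ho^1_\tau(\Gamma, G) \cong \Ac(G^\tau)_{r/m} / \Omega$ where $\Omega := W^\tau_{\mathrm{ext}}/W^\tau_a$ acts on $\Ac(G^\tau)$ by the induced action of those affine transformations that stabilize the alcove. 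Third — the identification of $\Omega$ with the symmetry group of the affine Dynkin diagram — I would invoke the standard fact (for the untwisted case $r=1$ this is classical; for the twisted cases it is Kac's description in \cite[Ch.~8]{kac:1990:infinite}) that the minuscule coweights representing nonzero classes of $\coX(T)^\tau/\check M$ act on $\Ac(G^\tau)$ by affine isometries permuting the vertices $\{0, 1, \dots, \ell\}$ of the affine diagram exactly as the outer symmetries do. Translating this vertex permutation through the coordinates $(s_i)$ of \cref{lemma:ArmSrm} gives precisely the relation $\sim$ defined before the proposition (e.g. the transposition of $s_0$ and $s_1$ in type $A_{2\ell-1}^{(2)}$), so $\Srm/\!\sim \;\cong\; \Arm/\Omega \;\cong\; \Ho^1_\tau(\Gamma, G)$.

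I would also double-check the two slightly exceptional features of the adjoint case: when $G = A_{2\ell}$ and $\tau$ has order $2$, the relevant lattice $\check M$ and the normalization of $\theta_0$ are shifted (as recorded in the displayed formulas before \cref{lemma:alcove}), and the affine diagram $A_{2\ell}^{(2)}$ has \emph{no} nontrivial outer symmetry, so in that case $\sim$ is trivial and one must check $\norm_\tau(\coX(T)) = \check M$ directly, so that $\phi$ is already a bijection. The other subtlety is making sure $\Omega$ acts on $\Ac(G^\tau)$ through genuine affine transformations carrying $\frac{r}{m}\coX(T)^\tau$ to itself (not merely linear ones), which follows because $\norm_\tau(\coX(T)) \subseteq \frac{r}{m}\coX(T)^\tau$ translations already preserve that scaled lattice.

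The main obstacle I anticipate is the third step: matching the abstract quotient $\norm_\tau(\coX(T))/\check M$ with the combinatorially-defined symmetry group of the twisted affine Dynkin diagram, and checking that its action on alcove coordinates is literally the relation $\sim$ written down in the text. This is essentially a case-by-case verification across the types $A_{2\ell-1}^{(2)}$, $D_{\ell+1}^{(2)}$, $E_6^{(2)}$, $D_4^{(3)}$, and $A_{2\ell}^{(2)}$, cross-referenced with Kac's tables; the rest of the argument is formal quotient bookkeeping. A clean way to organize it is to note that $\Omega$ is always cyclic (of order $1$ or $2$ in every twisted case, and of order equal to the order of the center of $G^\tau_{sc}$ in the untwisted case), identify its generator with the unique nontrivial diagram symmetry, and verify compatibility on one representative coweight per type.
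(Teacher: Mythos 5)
Your overall strategy is the same as the paper's: both proofs compare $\Arm \cong \frac{(r/m)\coX(T)^\tau}{\check M\rtimes W^\tau}$ with $\Ho^1_\tau(\Gamma,G)\cong \frac{(r/m)\coX(T)^\tau}{\norm_\tau(\coX(T))\rtimes W^\tau}$, so that everything reduces to identifying the action of $\Omega:=\norm_\tau(\coX(T))/\check M$ on the alcove with diagram automorphisms of the (twisted) affine diagram; the paper does this by treating the untwisted case via $\Omega_a\cong \check P/\check Q$, and the twisted cases by explicit lattice computations.

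However, one intermediate claim in your write-up is wrong and would derail the third step if taken literally: you assert that $\norm_\tau(\coX(T))/\check M$ equals $\coX(T)^\tau/\check M\cong \pi_1(G^\tau_{\mathrm{ad}})$ and then invoke the classical fact that this fundamental group acts by outer automorphisms of the affine diagram. For $G$ adjoint these two quotients differ precisely in the twisted types whose diagrams have no symmetry: for $(A_{2\ell},2)$, $(E_6,2)$ and $(D_4,3)$ one has $\norm_\tau(\coX(T))=\check M$ (so $\Omega$ is trivial), while $\coX(T)^\tau/\check M$ has order $2$, $4$ and $3$ respectively — so your claimed group could not act faithfully by symmetries of $A_{2\ell}^{(2)}$, $E_6^{(2)}$ or $D_4^{(3)}$, and quotienting by it would give the wrong count. (Your later remark that $\Omega$ has order $1$ or $2$ in every twisted case is the correct statement for $\norm_\tau(\coX(T))/\check M$, but it contradicts the identification with $\pi_1(G^\tau_{\mathrm{ad}})$; also $\check P/\check Q$ is not cyclic in type $D_{2n}$, though that is harmless for the untwisted case, where the classical $\Omega_a\cong\check P/\check Q$ argument you cite is exactly what the paper uses.) The fix is what the paper does and what your case-by-case plan would in effect force: compute $\norm_\tau(\coX(T))$ directly in each twisted type — it equals $\check M$ for $A_{2\ell}^{(2)}$, $E_6^{(2)}$, $D_4^{(3)}$, and has index-two quotient $\norm_\tau(\coX(T))/\check M\cong \ZZ/2\ZZ$ (computed in the paper via coinvariants, $(X_*(T)/\check Q)_\tau$) for $A_{2\ell-1}^{(2)}$ and $D_{\ell+1}^{(2)}$, where one then checks the nontrivial element acts by the unique diagram flip, i.e. by the relation $\sim$ on the coordinates $(s_i)$. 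With that correction, and with the faithfulness of the $\Omega$-action verified in those two families rather than quoted as "well known", your argument coincides with the paper's proof.
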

\begin{proof}
By \cref{lemma:ArmSrm}, there is a natural bijection 
\begin{equation}\label{eq_pre_bij}
\dfrac{\frac{r}{m}\coX(T)^\tau}{\check{M}\rtimes W^\tau} \cong \Srm.     \end{equation}
We will split the proof in three cases.

\textit{Case I.} When $\tau$ is trivial, equivalently $r=1$, we have that $\check{M}$ is the coroot lattice $\check{Q}$ of $G$, so that the left hand side of \eqref{eq_pre_bij} is $\frac{1}{m}X_*(T) / W_{\rm aff} $, where $W_{\rm aff}$ is the affine Weyl group $\check{Q}\rtimes W$. Moreover, since $G$ is adjoint, $X_*(T)$ equals to the coweight lattice $\coP$ of $G$.
Let $\Omega_a$ be the normalizer of the set of all affine simple reflections of $W_{\rm aff}$. It is known that the extended affine Weyl group $\coP\rtimes W$ is isomorphic to the semidirect product $ W_{\rm aff}\rtimes \Omega_a$ \cite[\S 2.3]{Bourbaki:1968}. In particular, $\Omega_a$ is the group of all outer automorphisms of $W_{\rm aff}$, and the map \eqref{eq_pre_bij} is $\Omega_a$-equivariant. 

Observe that we have the following bijections:
\[ \Ho_\tau^1(\Gamma,G)\cong  \dfrac{ \frac{r}{m}X_*(T)}{(\coP\rtimes W )} \cong  \left( \dfrac{\frac{r}{m}X_*(T)}{ W_{\rm aff}} \right) / (\coP/\check{Q}).   \] 
The action of $\coP/\check{Q}$ on $\frac{r}{m}X_*(T)  /W_{\rm aff} $ is equivalent to the action of $\Omega_a$, via the natural isomorphism $\Omega_a \cong \coP/\check{Q}$ \cite[\S 2.3]{Bourbaki:1968}. It follows that, there is a natural bijection $ \Ho_\tau^1(\Gamma,G)\cong \Srm/\!\sim$. 

\smallskip 

\textit{Case II.} When $(G,r)=(A_{2\ell}, 2 ), (E_6, 2), (D_4,3)$, there are no outer symmetry in the associated affine Dynkin diagram $A_{2\ell}^{(2)}, E_6^{(2)}, D_4^{(3)}$. Moreover, a direct computation allows us to compare $\check{M}$ with $\norm_\tau(\coX(T))$:
\begin{center}\renewcommand{\arraystretch}{1.6}
\begin{tabular}{|c|c|c|}
     \hline $(G,r)$ & $\check{M}$ & $\norm_\tau(\coX(T))$ \\ \hline
    $(A_{2\ell},2)$  & $\left\langle \begin{array}{c} 2\varpi_1-\varpi_2, (-\cvarpi_{i-1} +2 \cvarpi_{i} - \cvarpi_{i+1})_{i=2,\dots, \ell-2},\\ -\cvarpi_{\ell-2}+2\cvarpi_{\ell-1}-2\cvarpi_{\ell}, -\cvarpi_{\ell-1}+2\cvarpi_\ell \end{array} \right\rangle$ & $\langle (\cvarpi_i)_{i=1, \dots, \ell-1}, 2 \cvarpi_\ell \rangle$ \\ \cline{2-3}
     $(E_{6},2)$ & $\left\langle \begin{array}{c} 4 \cvarpi_1 - 2 \cvarpi_2, 2\cvarpi_1-4\cvarpi_2+4\cvarpi_3\\
     2\cvarpi_2-2\cvarpi_3+\cvarpi_4, \cvarpi_3-2\cvarpi_4 \end{array} \right\rangle $ & $\langle 2\cvarpi_1, 2\cvarpi_2, \cvarpi_3, \cvarpi_4 \rangle$ \\ \cline{2-3}
 \hline
   $(D_4,3)$ & $\langle 2\cvarpi_1-3\cvarpi_2, 3\cvarpi_1-6\cvarpi_2\rangle$ & $\langle \cvarpi_1, 3\cvarpi_3\rangle$\\ \hline
\end{tabular}\end{center}
This shows that $\norm_\tau(\coX(T)) =\check{M}$. This also completes the proof the theorem in these cases. 

\smallskip 

\textit{Case III.} We are left to consider other two cases, i.e. when $(G, r)$ is $(A_{2\ell-1}, 2)$, or $(D_{\ell+1}, 2)$. Denote by  $X_*(T)_\tau$ the quotient of $X_*(T)$ by the operator $1-\tau$. Note that there is a natural map $X_*(T)_\tau\to X_*(T)^\tau$ given by $\bar{\lambda}\mapsto \sum_{i=0}^{r-1}\tau^i(\lambda)$, where $\lambda$ is any lift $\bar{\lambda}$ in $X_*(T)$. This gives rise to the following bijections (in these two cases):
\[ X_*(T)_\tau\cong  \norm_\tau(X_*(T)), \quad \check{Q}_\tau\cong \norm_\tau(\check{Q})=\check{M} .   \]
It induces the following isomorphisms: 
\[ \norm_\tau(X_*(T))/ \check{M} \cong X_*(T)_\tau/  \check{Q}_\tau  \cong  (X_*(T)/ \check{Q})_\tau  \]
of abelian groups. By the calculation in \cite[Table 2.11]{Besson-Hong:2020}, $(X_*(T)/ \check{Q})_\tau \cong \mathbb{Z}/2\mathbb{Z}$. Thus, $\norm_\tau(X_*(T))/ \check{M}\cong \mathbb{Z}/2\mathbb{Z}$. By the argument similar to the case when $r=1$, the action of $\norm_\tau(X_*(T))/ \check{M}$ on $\frac{r}{m}X_*(T)^\tau  / \check{M}$ exactly corresponds to the only nontrivial outer automorphisms on the twisted affine Dynkin diagrams of types $A_{2\ell-1}^{(2)}$ and $ D_{\ell+1}^{(2)}$. This concludes the proof. 
\end{proof}

\subsubsection{Revisit Kac's classification of finite order automorphisms}
Let $G$ be a simple algebraic group over $k$ of characteristic $p$ and let $\sigma$ be an automorphism on $G$ such that $\sigma^m=0$ for some positive integer $m$. 
When $p=0$, it is a classical result of Kac \cite[Theorem 8.6]{kac:1990:infinite} that $\sigma$, up to conjugation, is classified by elements in $\Srm/\sim$. In fact, from our computation of $\Ho_\tau^1(\Gamma,G)$, the same classification holds for general characteristic $p$ such that $p$ does not divide $m$. 

Given a diagram automorphism $\tau$ on $G$, we consider the set 
\[ \Aut(G)_{\tau,m}:=\{ \sigma\in \Aut(G)  \,|\,  \sigma^m=1,   \bar{\sigma}=\bar{\tau} \text{ in } \Out(G)      \} ,       \]
where $\Out(G):=\Aut(G)/G_{\ad}$ is the group of outer automorphisms, and $\bar{\sigma}$ and $\bar{\tau}$ are the images of $\sigma$ and $\tau$ in $\Out(G)$. Let $\Aut(G)_{\tau,m}/\!\sim$ be the conjugation classes of $\Aut(G)_{\tau,m}$ by $G_{\ad}$. Then, 
there is a natural bijection 
\begin{equation} \label{eq:kac}  \Ho_\tau^1(\Gamma, G_{\ad})\cong   \Aut(G)_{\tau,m}/\sim\end{equation}
given by $[g]\mapsto \Ad_{{g}}\circ \tau$. Having described the left hand side of \eqref{eq:kac} in \cref{prop:alcove} in terms of $\Srm/\!\sim$, we obtain Kac's classification over a field of possibly positive characteristic.

\begin{theorem}\label{thm:clas} Let $G$ be a simple group over an algebraically closed field of characteristic $p \geq 0$. Let $\tau$ be a diagram automorphism of $G$ whose order divides $m$. If $p$ does not divide $m$, then there is a natural bijection $\Aut(G)_{\tau,m}/\! \sim \;  \cong \,  \Srm/\!\sim $.
\end{theorem}

\subsection{From diagram automorphisms to general automorphisms} \label{sec:cohomology_general} In the previous sections, we have seen how to compute $\Ho^1(\Gamma,G)$ under the assumption that the $\Gamma$ acts via diagram automorphisms only. We explore here what happens when we drop this assumption. 

For this purpose, in this section we denote by $m$ the order of the cyclic group $\Gamma$ and assume that $\car(k)$ does not divide $m$. We use the letters $\sigma$ or $\tau$ to denote automorphisms of $G$ such that $\sigma^m=\tau^m=1$. As before $\Ho^1_{\sigma}(\Gamma,G)$ (and  $\Ho^1_{\sigma}(\Gamma,G)$) denotes the non abelian cohomology of $\Gamma$ with coefficients in $G$, where we assume that the generator $\gamma$ of $\Gamma$ acts by $\sigma$ (resp. $\tau$).

We first of all note that, from the very definition of non-abelian group cohomology, we obtain the following result.

\begin{lemma}
Assume that $\sigma_2=\Ad_h\circ \sigma_1 \circ \Ad_{h^{-1}}$ for some $h\in G$.  Then, the map $\Ad_h \colon \Ho^1_{\sigma_1}(\Gamma, G)\to \Ho^1_{\sigma_2}(\Gamma, G)$ is a bijection. 
\end{lemma}

We further observe the following result.

\begin{lemma} \label{lem:H1} Let $\sigma$ and $\tau$ be automorphisms of $G$ as above and such that  $\sigma =\Ad_g \circ \tau$ for some $g \in G^{\tau}$. The multiplication map $- \cdot g \colon G \to G$ induces a bijection $\xymatrix{G/\!\sim_{\sigma} \ar[r]^-{\cong} & G/\!\sim_{\tau}}$. This induces an isomorphism $\xymatrix{\Ho^1_{\sigma}(\Gamma,G)  \ar[r]^-{\cong} & \Ho^1_{\tau}(\Gamma,G)}$ if and only if $g^{m}=1$.
\end{lemma}

\begin{proof} We are only left to show the last assertion. By definition, we have that $a \in \text{Z}^1_\sigma(\Gamma,G)$ is equivalent to $a \sigma(a) \dots \sigma^{m-1}(a)=1$. This amounts to
\begin{align*}1 = a \, g \tau(a) g^{-1} \,  g^2 \tau^2(a) g^{-2} \,  \dots \, g^{m-1}\tau^{m-1}(a) g^{m-1}= (a g) \, \tau(at)\,  \tau^2(at)\, \dots \, \tau^{m-1}(at) g^{m},\end{align*}
so that $ag\in \text{Z}^1_\tau(\Gamma,G)$ if and only if $g^m=1$, as claimed.
\end{proof}

\begin{cor}\label{cor:H1}
Let $G$ be an adjoint group, and let $\sigma_1$ and $\sigma_2$ be two automorphisms of $G$. If   $\bar{\sigma}_1=\bar{\sigma}_2 \in \text{Out}(G)$, then $\Ho^1_{\sigma_1}(\Gamma,G)=\Ho^1_{\sigma_2}(\Gamma,G)$.
\end{cor}

\begin{rmk} We have seen in \cref{prop:decom} that, under slightly more restrictive assumptions on $\car(k)$, every finite automorphism $\sigma$ of $G$ admits a decomposition as $\Ad_{t} \circ \tau$ for some diagram automorphism $\tau$ and $t \in T^\tau$. Moreover we can ensure that $t^m=1$ if and only if $|Z(G^{\tau,0})|=1$. This happens when $(G,|\tau|)$ are of type $(A_{2\ell},2)$, $(E_6,2)$, or $(D_4,3)$. In these cases then \cref{lem:H1} implies that $\Ho^1_\sigma(\Gamma,G) = \Ho^1_\tau(\Gamma,G)$ whenever $\bar{\sigma}=\bar{\tau} \in \Out(G)$. \end{rmk} 

\begin{eg}
Let $\sigma$ be involution on $\SL_n$ given by $\sigma(g)=(g^{t})^{-1}$, i.e. the one sending a matrix to the inverse of its transpose. Take $\Gamma=\mathbb{Z}/2\mathbb{Z}$. By linear algebra, $\Ho^1_\sigma(\Gamma, \SL_n)$ consists of one element. It is well-known that when $n=2i+1 $ with $i\geq 1$, then $\sigma$ is a diagram automorphism preserving a Borel subgroup (different from the group of upper-triangular matrices).  We can see that this calculation agrees with the calculation in \cref{eg:diagram}.  When $n=2i$ with $i\geq 1$, the automorphism $\sigma$ is not a diagram automorphism and in this case, the cardinality of  $\Ho_\sigma^1(\Gamma, \SL_n)$ does not coincide with the cardinality of $\Ho_\tau^1(\Gamma, \SL_n)$, where $\tau$ is the diagram automorphism of $\SL_n$ of order $2$ for $n \geq 3$, and the trivial diagram automorphism for $\SL_2$.

Let $G$ be the group $\PGL_n$ and let $\sigma$ be the involution as above. Let $g$ be an invertible matrix in $\GL_n$ and denote by $\bar{g}$ is equivalence class in $\PGL_n$. The condition $\bar{g}\sigma(\bar{g})=1$ holds if and only if  there exists a non zero scalar $\lambda \in k$ such that $g^t=\lambda g$. This implies that $\lambda^2=1$, thus $\lambda=\pm 1$. If $n=2i+1$, $\lambda=1$, as there is no invertible anti-symmetric matrix. It follows that when $n=2i+1$, the set $\Ho_\sigma^1(\Gamma, \PGL_n)$ has one element. When $n=2i$, instead, $g$ can be either symmetric or anti-symmetric matrices and this determines two distinct elements of $\Ho^1_\sigma(\Gamma, \PGL_n)$.  Note that the cardinalities of these cohomology spaces agree with the calculation in \cref{eg:diagram} and with  \cref{cor:H1}.  
\end{eg}

\bibliographystyle{alpha}
\bibliography{BiblioFinal}
\vfill 

\end{document}